\documentclass[11pt]{article}
\usepackage{amsmath, amsthm, amssymb, amsfonts}
\usepackage{mathrsfs}
\usepackage{graphicx}
\usepackage{tikz}
\usetikzlibrary{arrows, decorations.markings}
\usepackage{enumerate}
\usepackage{fullpage}

\usepackage[colorlinks=true, linkcolor=blue, citecolor=red, urlcolor=blue]{hyperref}

\theoremstyle{plain}
\newtheorem{theorem}{Theorem}
\newtheorem{lemma}[theorem]{Lemma}
\newtheorem{proposition}[theorem]{Proposition}

\theoremstyle{remark}

\newcommand{\norm}[1]{\left\|#1\right\|}

\newcommand\blfootnote[1]{%
  \begingroup
  \renewcommand\thefootnote{}\footnote{#1}%
  \addtocounter{footnote}{-1}%
  \endgroup
}

\title{{\Large\bf Maximal Averages on the Affine Group $G_n$ and applications}}
\author{Ji Li, Chun-Yen Shen and Chaojie Wen}
\date{}

\begin{document}

\maketitle
\blfootnote{{Keywords:} isotropic affine groups, maximal averages, $L^p$-boundedness, weak type endpoint estimates}
\blfootnote{{Mathematics Subject Classification (2020):} 42B25, 43A80, 22E25.}

\begin{abstract}
Let \(G_n=\mathbb R^n\rtimes\mathbb R_+\) be equipped with the left Haar
measure
\(
        d\mu(x,y)=\frac{dx\,dy}{y^{n+1}}.
\)
We study maximal averages associated with three basic motions on \(G_n\):
horizontal translations, vertical dilations, and fixed hyperbolic geodesics in
the upper half-space model.  The translation maximal operator is the Euclidean
Hardy--Littlewood maximal operator on each horizontal slice.  The
Haar-compatible dilation maximal operator is of weak type \((1,1)\) and bounded on
\(L^p(G_n)\) for \(1<p\le\infty\), but it is not strongly bounded on
\(L^1(G_n)\).  By contrast, the unweighted Lebesgue dilation average is
unbounded on every finite \(L^p(G_n)\) and is not of weak type \((1,1)\).

For fixed hyperbolic geodesic averages, the large-time part is strongly bounded
on \(L^1(G_n)\) because of modular exponential decay.  The small-time part is a
finite-type parabolic maximal problem.  
Using the corresponding local finite-type \(L\log\log L\) endpoint estimate for the geodesic slice, we prove
\(
        \mathcal M_{\gamma_\omega}:L\log\log L(G_n)
        \longrightarrow L^{1,\infty}(G_n)
\)
in weak Orlicz form, together with the strong \(L^p(G_n)\) bounds for
\(1<p\le\infty\).  We also show that the strong \(L^1\) endpoint fails.
Finally, we record a discrete random-walk maximal inequality whose sufficient
condition is expressed through the modular drift
$$
        \rho_p(\sigma)=\int_{G_n}y(h)^{n/p}\,d\sigma(h),
$$
where \(\sigma\) is the probability measure defining the right random walk.
\end{abstract}

\section{Introduction}\label{sec:intro}

Let
\(
        G_n=\mathbb R^n\rtimes\mathbb R_+
\)
be the semidirect product with multiplication
$$
        (x,y)(x',y')=(x+yx',yy'),
        \qquad x,x'\in\mathbb R^n,\quad y,y'>0.
$$
This is the subgroup of the full affine group of $\mathbb R^n$ generated by translations and positive scalar dilations.  We call it the \emph{isotropic affine group}.  It is also an $ax+b$ type group.  We identify $G_n$ with the upper half-space $\mathbb R^n\times\mathbb R_+$.  With the standard left-invariant Riemannian metric
$$
        ds^2=y^{-2}(|dx|^2+dy^2),
$$
this is the upper half-space model of real hyperbolic space $\mathbb H^{n+1}$.

The left Haar measure is
$$
        d\mu(x,y)=\frac{dx\,dy}{y^{n+1}},
$$
and the modular function is
$$
        \Delta(x,y)=y^{-n}.
$$
The factor $y^{-n}$ is the source of the normalizations which appear below.  In particular, right dilation by $(0,e^t)$ changes left Haar measure by the factor $e^{nt}$, and the corresponding Haar-compatible average contains the compensating weight $e^{-nt}$.

The aim of this paper is to give precise $L^p$ and endpoint estimates for maximal averages associated with three simple families of right translations on $G_n$.  The first two families, horizontal translations and vertical dilations, reduce to classical one-parameter Hardy--Littlewood maximal operators.  We give the reductions explicitly, because they determine the correct endpoint statements.  The third family consists of averages along one fixed hyperbolic geodesic.  For this operator the small-time part is a parabolic endpoint problem, while the large-time part has extra decay coming from the modular function.

We consider the following maximal operators.

\begin{enumerate}
    \item \emph{Horizontal translations.}
    For $(x,y)\in G_n$, set
    $$
    \mathcal M_{\mathrm{trans}}f(x,y)
    =
    \sup_{r>0}\frac1{|B_r|}\int_{B_r}|f(x+yt,y)|\,dt,
    $$
    where $B_r\subset\mathbb R^n$ is the Euclidean ball centered at the origin.

    \item \emph{Vertical dilations.}
    The unweighted dilation maximal operator is
    $$
    M_{\mathrm{Leb}}f(x,y)
    =
    \sup_{r>0}\frac1r\int_0^r |f(x,ye^t)|\,dt.
    $$
    This normalization is not compatible with left Haar measure.  The Haar-compatible dilation maximal operator is
    $$
    \mathcal M_{\mathrm{dil}}f(x,y)
    =
    \sup_{r>0}\frac1{V_n(r)}\int_0^r |f(x,ye^t)|e^{-nt}\,dt,
    \qquad
    V_n(r)=\int_0^r e^{-nt}\,dt.
    $$

    \item \emph{Fixed hyperbolic geodesics.}
    For a fixed direction $\omega\in S^{n-1}$, let
    $
        \gamma_\omega(t)=(\omega\tanh t,\operatorname{sech}t),
        \ t\ge0.
    $
    The corresponding maximal operator is
    $$
    \mathcal M_{\gamma_\omega}f(x,y)
    =
    \sup_{r>0}\frac1r\int_0^r
    |f(x+y\omega\tanh t,y\operatorname{sech}t)|\,dt.
    $$
\end{enumerate}

Our first result is the translation estimate.

\begin{theorem}\label{thm:intro-trans}
The translation maximal operator $\mathcal M_{\mathrm{trans}}$ is of weak type $(1,1)$ and is bounded on $L^p(G_n)$ for every $1<p\le\infty$.  It is not bounded from $L^1(G_n)$ to $L^1(G_n)$.
\end{theorem}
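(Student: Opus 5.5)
The plan is to reduce everything to the Euclidean Hardy--Littlewood maximal operator on each horizontal slice, using Fubini in the variable $y$. Fix $y>0$ and write $f_y(x)=f(x,y)$. The substitution $s=yt$ gives
$$
\frac1{|B_r|}\int_{B_r}|f(x+yt,y)|\,dt=\frac1{|B_{yr}|}\int_{B_{yr}}|f_y(x+s)|\,ds,
$$
and as $r$ ranges over $(0,\infty)$ so does $yr$. Hence $\mathcal M_{\mathrm{trans}}f(x,y)=M_{HL}f_y(x)$, where $M_{HL}$ is the centered Hardy--Littlewood maximal operator on $\mathbb R^n$. Measurability in $(x,y)$ is not an issue, since the supremum may be taken over rational $r$.

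For $1<p<\infty$ we have $\|M_{HL}f_y\|_{L^p(\mathbb R^n)}\le C_{n,p}\|f_y\|_{L^p(\mathbb R^n)}$. We raise this to the $p$-th power, multiply by $y^{-n-1}$ and integrate in $y$. Since $d\mu=dx\,dy/y^{n+1}$ is a product of Lebesgue measure in $x$ and $dy/y^{n+1}$, we obtain $\|\mathcal M_{\mathrm{trans}}f\|_{L^p(G_n)}\le C_{n,p}\|f\|_{L^p(G_n)}$. The case $p=\infty$ is trivial, because $\mathcal M_{\mathrm{trans}}f\le\|f\|_\infty$ $\mu$-a.e. Here one should note that a $\mu$-null set has a.e. slice of Lebesgue measure zero, so the bound holds slice by slice for a.e. $y$.

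For the weak $(1,1)$ bound, fix $\lambda>0$. By Fubini,
$$
\mu\{\mathcal M_{\mathrm{trans}}f>\lambda\}=\int_0^\infty\bigl|\{x:M_{HL}f_y(x)>\lambda\}\bigr|\frac{dy}{y^{n+1}}\le\int_0^\infty\frac{C_n}{\lambda}\|f_y\|_{L^1}\frac{dy}{y^{n+1}}=\frac{C_n}{\lambda}\|f\|_{L^1(G_n)}.
$$
The key point is that the weak-type constant is linear in $\|f_y\|_1$. This is what allows the slice estimates to be integrated, which a generic weak-$L^1$ quasinorm would not.

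For the failure of strong $L^1$ boundedness, take $f=\mathbf 1_{B_1}(x)\,\mathbf 1_{[1,2]}(y)$, which lies in $L^1(G_n)$. For $y\in[1,2]$ and $|x|>1$ we have $M_{HL}\mathbf 1_{B_1}(x)\ge c_n|x|^{-n}$, using balls of radius $2|x|$. This function is not integrable on $\{|x|>1\}$, so $\int|\mathcal M_{\mathrm{trans}}f|\,d\mu\ge\int_1^2\int_{|x|>1}c_n|x|^{-n}\,dx\,y^{-n-1}dy=\infty$.

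The only step requiring any care is the slice identification, specifically that the rescaled radius $yr$ exhausts $(0,\infty)$. The rest follows directly from the classical theory.
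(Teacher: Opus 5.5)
Your proof is correct. For the weak $(1,1)$ and $L^p$ bounds it is the paper's argument: the exact slice identity $\mathcal M_{\mathrm{trans}}f(x,y)=M_{\mathrm{HL}}^{(n)}(f_y)(x)$, followed by Fubini against $dx\,dy/y^{n+1}$. Your remarks on measurability and on the $p=\infty$ case fill in details the paper leaves implicit.

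Where you differ is the counterexample to strong $L^1$. You use the single function $\mathbf 1_{B_1}(x)\mathbf 1_{[1,2]}(y)$ and the classical non-integrable tail $M_{\mathrm{HL}}^{(n)}\mathbf 1_{B_1}(x)\ge 2^{-n}|x|^{-n}$ for $|x|>1$. This gives $\mathcal M_{\mathrm{trans}}f\notin L^1(G_n)$ outright. It is shorter, and it shows the operator does not even map $L^1$ into $L^1$.

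The paper instead uses the scaling family $f_\varepsilon=\mathbf 1_{\{|x|<\varepsilon\}}\mathbf 1_{[1,2]}(y)$ and integrates only over $2\varepsilon<|x|<1$. This yields $\|\mathcal M_{\mathrm{trans}}f_\varepsilon\|_1/\|f_\varepsilon\|_1\gtrsim\log(1/\varepsilon)$. That argument uses only radii $r\lesssim 1$ and only a bounded region of $G_n$. So it shows the failure is a local, $L\log L$-type obstruction: it survives truncating the supremum to small radii or restricting to compact sets. Your tail argument gives nothing in that setting, since it needs radii $2|x|/y\to\infty$. This local mechanism is the one the paper reuses for the fixed-geodesic operator in Proposition~\ref{prop:geo_not_l1}.

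For the statement as posed, either argument suffices.
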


The second result gives the endpoint behavior of the two dilation normalizations.

\begin{theorem}\label{thm:intro-dil}
The unweighted dilation maximal operator $M_{\mathrm{Leb}}$ is not bounded on $L^p(G_n)$ for any $1\le p<\infty$, and it is not of weak type $(1,1)$.  The Haar-compatible dilation maximal operator $\mathcal M_{\mathrm{dil}}$ is of weak type $(1,1)$ and is bounded on $L^p(G_n)$ for every $1<p\le\infty$.  However, $\mathcal M_{\mathrm{dil}}$ is not bounded from $L^1(G_n)$ to $L^1(G_n)$.
\end{theorem}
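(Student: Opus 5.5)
The plan is to change variables so that the dilation operators become one-dimensional Hardy--Littlewood maximal operators on the logarithmic variable. Write $y=e^s$ and set, for each fixed $x\in\mathbb R^n$, $g_x(s)=f(x,e^s)$. Then $d\mu=dx\,e^{-ns}\,ds$, so
\[
\|f\|_{L^p(G_n)}^p=\int_{\mathbb R^n}\int_{\mathbb R}|g_x(s)|^p e^{-ns}\,ds\,dx .
\]
The Lebesgue average becomes $M_{\mathrm{Leb}}f(x,e^s)=\sup_r \frac1r\int_0^r|g_x(s+t)|\,dt$. The Haar-compatible one becomes
\[
\mathcal M_{\mathrm{dil}}f(x,e^s)=\sup_r\frac1{V_n(r)}\int_0^r|g_x(s+t)|e^{-nt}\,dt .
\]
Everything then reduces to one-sided maximal operators on $(\mathbb R,e^{-ns}ds)$, uniformly in $x$. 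Integrating over $x$ at the end transfers each estimate to $G_n$; for the weak type this is just adding the slice level sets.

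For $\mathcal M_{\mathrm{dil}}$, the key observation is that the weight $e^{-ns}$ is exactly the density of the measure $\nu=e^{-ns}ds$. Since $e^{-nt}=e^{-n(s+t)}e^{ns}$, we have
\[
\frac1{V_n(r)}\int_0^r|g(s+t)|e^{-nt}\,dt=\frac1{\nu([s,s+r])}\int_{[s,s+r]}|g|\,d\nu .
\]
So $\mathcal M_{\mathrm{dil}}$ on each slice is the one-sided (forward) Hardy--Littlewood maximal operator with respect to the doubling-free but one-dimensional measure $\nu$ on $\mathbb R$. The Besicovitch/rising-sun covering argument in dimension one gives weak $(1,1)$ for centered-free one-sided maximal functions with respect to any locally finite Borel measure. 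Concretely, the set $\{M^+_\nu g>\lambda\}$ is a union of intervals $[s,s+r]$ with $\nu$-average exceeding $\lambda$. One extracts a disjoint or boundedly-overlapping subfamily, with overlap at most $2$ in one dimension, to get $\nu(\{M_\nu^+g>\lambda\})\le 2\lambda^{-1}\|g\|_{L^1(\nu)}$. Alternatively, one can apply the rising sun lemma to $G(u)=\int_0^u|g|\,d\nu-\lambda\nu([0,u])$ for the constant $1$. The $L^\infty$ bound is trivial, and Marcinkiewicz interpolation gives $1<p\le\infty$. For the failure of strong $L^1$, take $f(x,y)=\mathbf 1_{|x|<1}\mathbf 1_{1<y<e}$, so $g_x=\mathbf 1_{[0,1]}$. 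For $s<0$, use $r=1-s$ to get
\[
\mathcal M_{\mathrm{dil}}f\ge \frac{\nu([0,1])}{\nu([s,1])}\approx c\,e^{ns}.
\]
Integrating against $e^{-ns}ds$ over $s<0$ diverges logarithmically, just as for the classical maximal function.

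For $M_{\mathrm{Leb}}$, the averages are with respect to $dt$ rather than $\nu$, and the measure $e^{-ns}ds$ blows up as $s\to-\infty$. Take the same $g_x=\mathbf 1_{[0,1]}$, $|x|<1$. For $s<-1$, choosing $r=1-s$ gives
\[
M_{\mathrm{Leb}}f\ge \frac1{1-s}\gtrsim |s|^{-1}.
\]
Then
\[
\|M_{\mathrm{Leb}}f\|_{L^p}^p\gtrsim\int_{-\infty}^{-1}|s|^{-p}e^{-ns}\,ds=\infty
\]
for every $p<\infty$, while $f\in L^p$. This already kills every finite $L^p$ bound. The same computation kills weak $(1,1)$: for fixed $\lambda$ small, the level set contains $\{-1/\lambda<s<-1\}$, whose $\nu$-measure is about $e^{n/\lambda}/n$. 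This is not $\lesssim \lambda^{-1}\|f\|_1$ as $\lambda\to0$. Even cleaner, $\{M_{\mathrm{Leb}}f>\lambda\}$ has infinite measure for no $\lambda$, but the exponential growth in $1/\lambda$ contradicts the $C/\lambda$ bound.

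The main obstacle I expect is the weak $(1,1)$ covering argument for the one-sided maximal operator with respect to the non-doubling, non-translation-invariant measure $\nu$. I will do it via the rising sun lemma applied to the continuous function $u\mapsto\int_{-\infty}^u(|g|-\lambda e^{0})\,d\nu$, adjusted appropriately. The level set is then exactly a union of disjoint open intervals on each of which the $\nu$-average of $|g|$ equals $\lambda$. This gives the constant $1$ with no doubling needed.
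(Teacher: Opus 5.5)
Your proposal is correct. Its skeleton is the paper's: logarithmic coordinates, the identity $\mathcal M_{\mathrm{dil}}f(x,e^u)=M_\nu^+(F(x,\cdot))(u)$ with $d\nu=e^{-nu}\,du$, Fubini in $x$, and essentially the same test functions. Your plain indicator for the strong $L^1$ failure gives $\mathcal M_{\mathrm{dil}}f\ge(1-e^{-n})y^n$, which is the paper's $ny^n$ bound up to a constant.

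The genuine difference is how you prove the one-dimensional lemma. The paper substitutes $s=e^{-nu}/n$. This pushes $\nu$ forward to Lebesgue measure on $(0,\infty)$ and maps $[u,u+r]$ to $[se^{-nr},s]$, so $M_\nu^+$ becomes a one-sided Hardy--Littlewood maximal function and the classical weak $(1,1)$ and $L^p$ bounds can simply be quoted. You instead prove weak $(1,1)$ directly for the non-doubling measure $\nu$, by the rising sun lemma for $G(u)=\int_0^u|g|\,d\nu-\lambda\nu([0,u])$ (or an overlap-$2$ covering), and then interpolate.

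Your route is self-contained, gives constant $1$, and uses nothing about $e^{-nu}$ beyond the fact that $\nu$ is non-atomic and locally finite. The paper's route is shorter: its substitution is the monotone reparametrization $u\mapsto\nu((u,\infty))$, which is precisely why such one-dimensional arguments do not depend on the measure.

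If you write your version out, record the following points:
\begin{itemize}
\item $G(u)\to+\infty$ as $u\to-\infty$, because $\nu((-\infty,0])=\infty$. Hence no component of the level set is unbounded on the left.
\item A component of the form $(a,\infty)$ satisfies $G(a)=\lim_{t\to\infty}G(t)$. This limit exists because $\nu([0,\infty))<\infty$.
\item The alternative function $\int_{-\infty}^u(|g|-\lambda)\,d\nu$ that you also mention is identically $-\infty$. Keep the anchor at $0$.
\end{itemize}

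There is also one small slip in the $M_{\mathrm{Leb}}$ weak-type argument. From $M_{\mathrm{Leb}}f(x,e^s)\ge 1/(1-s)$, the level set $\{M_{\mathrm{Leb}}f>\lambda\}$ contains $\{|x|<1\}\times\{1-1/\lambda<s<-1\}$, not $\{-1/\lambda<s<-1\}$. Its measure is still $\gtrsim_n e^{n/\lambda}$, so the contradiction with a $C/\lambda$ bound stands.
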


The third result concerns fixed geodesic averages.

\begin{theorem}\label{thm:intro-geo}
For every fixed direction \(\omega\in S^{n-1}\), the geodesic maximal operator
\(\mathcal M_{\gamma_\omega}\) is bounded on \(L^p(G_n)\) for every
\(1<p\le\infty\).  Moreover,
$$
        \mathcal M_{\gamma_\omega}:L\log\log L(G_n)
        \longrightarrow L^{1,\infty}(G_n)
$$
in the weak Orlicz sense: for every \(\lambda>0\),
$$
        \mu\{(x,y):\mathcal M_{\gamma_\omega}f(x,y)>\lambda\}
        \le
        C_n\int_{G_n}
        \frac{|f(x,y)|}{\lambda}
        \log\log\left(e^e+\frac{|f(x,y)|}{\lambda}\right)
        d\mu(x,y).
$$
The operator is not bounded from \(L^1(G_n)\) to \(L^1(G_n)\).
\end{theorem}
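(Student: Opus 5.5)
I would split $\mathcal M_{\gamma_\omega}\le \mathcal M^{\mathrm{loc}}+\mathcal M^{\infty}$, where $\mathcal M^{\mathrm{loc}}$ takes the supremum over $0<r\le 1$ and $\mathcal M^\infty$ over $r>1$. The large-time part is handled by modular decay. For $r>1$,
$$
\frac1r\int_0^r|f(g\gamma_\omega(t))|\,dt\le \int_0^1|f(g\gamma_\omega(t))|\,dt+\sum_{k\ge1}\frac{1}{k}\int_k^{k+1}|f(g\gamma_\omega(t))|\,dt ,
$$
and I would bound this crudely by the full integral $\int_0^\infty|f(g\gamma_\omega(t))|\,dt$. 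Right translation by $h=\gamma_\omega(t)$ has $y(h)=\operatorname{sech}t$, so
$$
\|f(\cdot\,h)\|_{L^1(\mu)}=\Delta(h)^{-1}\|f\|_{L^1(\mu)}=(\operatorname{sech}t)^{n}\|f\|_{L^1(\mu)}\lesssim e^{-nt}\|f\|_1 .
$$
Minkowski's inequality then gives
$$
\Bigl\|\int_0^\infty|f(\cdot\,\gamma_\omega(t))|\,dt\Bigr\|_{L^1}\lesssim\|f\|_1 .
$$
Hence $\mathcal M^\infty$ is strongly bounded on $L^1$, and the same argument with the factor $e^{-nt/p}$ gives boundedness on $L^p$. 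Together with the trivial $L^\infty$ bound, the large-time part is done.

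For the small-time part, I would fix the base point and left-translate. Since left translations preserve $\mu$, it suffices to study $t\mapsto g\gamma_\omega(t)$ for $t\in[0,1]$. On the region $y\sim y_0$ the measure $\mu$ is comparable to $y_0^{-n-1}dx\,dy$. After rescaling the variables $(x,y)\mapsto (x/y_0,y/y_0)$, which is itself a left translation, the curve becomes $(x+y\omega\tanh t,\,y\operatorname{sech}t)$. Near $t=0$ this is
$$
(x+y\omega t+O(t^3),\; y(1-t^2/2)+O(t^4)),
$$
a curve of finite type whose first derivative is horizontal and second derivative is vertical: a parabola. The localized operator is therefore a local maximal average along a parabola-type curve in $\mathbb R^{n+1}$, with a coefficient $y$ that is essentially constant on unit Whitney cubes. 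Using a Whitney decomposition of the upper half space into hyperbolic unit balls, with the bounded-overlap property for $t\le1$, I would transfer the local Euclidean estimates. For $L^p$, $1<p\le\infty$, the parabolic maximal theorem (Nagel–Riviere–Wainger / Stein) applies. For the endpoint, I would invoke the local finite-type $L\log\log L\to L^{1,\infty}$ estimate for the geodesic slice assumed in the paper. These are then summed over Whitney pieces. The weak Orlicz inequality is compatible with this, since the distribution functions add over pieces with bounded overlap and the Orlicz functional $\int \Phi(|f|/\lambda)$ is additive.

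To combine the two parts, I would use
$$
\{\mathcal M_{\gamma_\omega} f>\lambda\}\subset\{\mathcal M^{\mathrm{loc}}f>\lambda/2\}\cup\{\mathcal M^\infty f>\lambda/2\}.
$$
Chebyshev applied to $\mathcal M^\infty$ gives $\lesssim\|f\|_1/\lambda$, which is dominated by the Orlicz expression.

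For the failure of the strong $L^1$ bound, I would take $f=\mathbf 1_{B}$ with $B$ a small hyperbolic ball of radius $\delta$ around $(0,1)$. For points $g$ in a tubular region of length $\sim1$ along the geodesic through $B$ (of measure $\sim$ const), we have $\mathcal M f(g)\gtrsim \delta/\mathrm{dist}(g,B)$, up to distance $1$. Integrating over the tube gives $\|\mathcal Mf\|_1\gtrsim \delta\log(1/\delta)\cdot\delta^{n}$, while $\|f\|_1\sim\delta^{n+1}$. The ratio $\log(1/\delta)$ is unbounded. The main obstacle is the local step: checking that the hyperbolic rescaling keeps the curvature of the slice uniformly nondegenerate, so that the assumed endpoint estimate applies with uniform constants across Whitney pieces.
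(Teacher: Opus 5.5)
Your proposal is correct and has the same structure as the paper's proof. You split at $r=1$ and bound the tail by $\int_0^\infty|S_tf|\,dt$ using $\|S_tf\|_{L^p}=(\operatorname{sech}t)^{n/p}\|f\|_{L^p}$. You reduce the local part to the Seeger--Tao--Wright finite-type endpoint, and you absorb the tail's Chebyshev bound through $\Phi(s)\ge s$, with the doubling of $\Phi$ handling $\lambda/2$.

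The differences are in packaging. You localize with hyperbolic Whitney cubes, using that $\mathcal M_{\gamma_\omega}$ commutes with left translations and that $\mu$ is left invariant. This makes uniformity of constants across pieces automatic, so the ``main obstacle'' you name at the end is not really one. The paper instead slices in $z\in\omega^\perp$, decomposes dyadically in $y$, rescales by $(u,y)=(0,2^k)(U,Y)$ (the same left translation written in coordinates), and patches in $U$; its boxes are your Whitney cubes. Your counterexample also works. The base points whose forward arc meets $\tfrac12B$ at time $\sim d$ form a set of measure $\sim\delta^n d$, so the whole tube has measure $\sim\delta^n$, not ``$\sim$ const'' as you wrote. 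This gives $\delta^{n+1}\log(1/\delta)$ against $\|f\|_1\sim\delta^{n+1}$. The paper's slab $[0,\varepsilon]\times[0,1]^{n-1}\times[1,2]$ produces the same logarithm from the one-variable bound $\varepsilon/|x_1|$ and needs no tube-volume estimate.

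Two steps need tightening.

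First, for $n\ge2$ the arc $t\mapsto(x+y\omega\tanh t,\,y\operatorname{sech}t)$ lies in a $2$-plane, so it is not of finite type in $\mathbb R^{n+1}$. You must slice in $z\in\omega^\perp$, using $d\mu=dz\,d\nu_n$ and the fact that the operator fixes $z$, and apply the planar estimate slice by slice via Fubini.

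Second, on a unit Whitney cube $y$ varies by a fixed factor, not negligibly. So you cannot freeze it and quote the translation-invariant parabolic theorem of Nagel--Riviere--Wainger. You have three ways to repair this:
\begin{enumerate}
\item invoke a variable-coefficient theorem;
\item pass to the coordinate $w=y^2/2$, in which the arcs become exact translates $(u+\sigma,\,w-\sigma^2/2)$, $\sigma=y\tanh t$, of a single parabola, with comparable averaging measure for $t\le1$ and $y$ in a compact range;
\item do as the paper does and derive the local $L^p$ bounds from the $L\log\log L$ endpoint together with the trivial $L^\infty$ bound.
\end{enumerate}
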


The weak \(L\log\log L\) estimate is the endpoint supplied by the finite-type
theorem used in this paper.  We do not claim that it is sharp for this
particular fixed-geodesic operator.  In particular, the failure of strong
\(L^1\) boundedness does not rule out a weak type \((1,1)\) estimate or an
endpoint estimate in a smaller Orlicz space.

We also include a simple random-walk estimate.  Let $\sigma$ be a compactly supported probability measure on $G_n$, and define
$$
        R_\sigma f(g)=\int_{G_n}f(gh)\,d\sigma(h),
        \qquad
        \widetilde M_\sigma f(g)
        =
        \sup_{N\ge1}\frac1N\sum_{k=1}^N |R_\sigma^k f(g)|.
$$
For $1\le p<\infty$, set
$$
        \rho_p(\sigma)=\int_{G_n} y(h)^{n/p}\,d\sigma(h).
$$

\begin{theorem}\label{thm:random_walk_main}
Let $\sigma$ be a compactly supported probability measure on $G_n$.
\begin{enumerate}[\rm(i)]
    \item If $1\le p<\infty$ and $\rho_p(\sigma)<1$, then $\widetilde M_\sigma$ is bounded on $L^p(G_n)$.
    \item If $\rho_1(\sigma)>1$, then $\widetilde M_\sigma$ is not bounded on $L^1(G_n)$ and is not of weak type $(1,1)$.
\end{enumerate}
\end{theorem}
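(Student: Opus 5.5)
Part (i) follows from the norm of the averaging operator $R_\sigma$ on $L^p(G_n)$. For fixed $h=(b,s)$, write $\rho_h f(g)=f(gh)$. The change of variables $g\mapsto gh^{-1}$ multiplies left Haar measure by the modular factor, so $\int_{G_n}|f(gh)|^p\,d\mu(g)=\Delta(h)^{-1}\int|f|^p\,d\mu$. Since $\Delta(b,s)=s^{-n}$, this gives $\|\rho_h f\|_p=s^{n/p}\|f\|_p=y(h)^{n/p}\|f\|_p$. Minkowski's integral inequality then yields
$$
\|R_\sigma f\|_{L^p(G_n)}\le\int_{G_n}y(h)^{n/p}\,d\sigma(h)\,\|f\|_{L^p(G_n)}=\rho_p(\sigma)\|f\|_{L^p(G_n)}.
$$
Iterating gives $\|R_\sigma^k f\|_p\le\rho_p(\sigma)^k\|f\|_p$. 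I then bound the maximal function crudely by dropping the $1/N$:
$$
\widetilde M_\sigma f\le\sup_N\frac1N\sum_{k=1}^N|R_\sigma^kf|\le\sum_{k\ge1}|R_\sigma^k f|,
$$
so $\|\widetilde M_\sigma f\|_p\le\sum_k\rho_p^k\|f\|_p=\frac{\rho_p}{1-\rho_p}\|f\|_p$. The case $p=1$ is included. Compact support of $\sigma$ is used only to make $\rho_p$ finite and to justify Fubini.

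For part (ii), I will test on $f\ge0$ and use the exact identity $\int R_\sigma f\,d\mu=\rho_1(\sigma)\int f\,d\mu$. This holds for nonnegative $f$ by Tonelli and the computation above with $p=1$. Hence $\|R_\sigma^kf\|_1=\rho_1^k\|f\|_1$, which grows exponentially. The maximal function dominates $\frac1N R_\sigma^N f$, giving $\|\widetilde M_\sigma f\|_1\ge \rho_1^N\|f\|_1/N\to\infty$, so strong $L^1$ boundedness fails.

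For weak type I need mass that is spread out rather than just large in total. I would fix a small ball $f=\mathbf 1_{U}$ near the identity. The weak-type failure requires the level set $\{\widetilde M_\sigma f>\lambda\}$ to have measure much larger than $\|f\|_1/\lambda$. The plan is to consider $R_\sigma^kf(g)=\sigma^{*k}(g^{-1}U)$, where $\sigma^{*k}$ is the $k$-fold convolution. Its support set $\{g:g^{-1}U\cap\operatorname{supp}\sigma^{*k}\neq\emptyset\}=U(\operatorname{supp}\sigma^{*k})^{-1}$ has Haar measure comparable to $\int\Delta(h)^{-1}$ over the support. The condition $\rho_1>1$ says $\sigma$ places weight on right translates that expand Haar measure.

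Concretely I would pick a level $\lambda_k=c\,\theta^k/k$ and use a large-deviation or Cramér-type tilt. The aim is to show that the set where $\frac1k R^k_\sigma f>\lambda_k$ has measure at least $(\rho_1/\theta')^k\cdot\theta^{-k}k\,\|f\|_1$. For this, write the $y$-coordinate of the walk as $\prod s_i$ with $\mathbb E[s^n]=\rho_1>1$. Under the tilted measure $s^n\,d\sigma/\rho_1$, typical paths have $\prod s_i^n\approx e^{k m}$, where $m$ is the tilted mean of $n\log s$. The mass $\int R^k f=\rho_1^k|U|$ concentrates on a set of measure roughly $e^{km}|U|$ where $R^kf\approx\rho_1^ke^{-km}$. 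Choosing $\lambda=\rho_1^ke^{-km}/(2k)$ then gives $\lambda\mu\{\widetilde M f>\lambda\}\gtrsim\rho_1^k|U|/k$, which is unbounded in $k$.

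The main obstacle is this last step: showing that $R_\sigma^kf$ is genuinely spread over a region of measure $\approx e^{km}$ at height $\approx\rho_1^ke^{-km}$, instead of being concentrated on a thin set. The exponential growth $\rho_1^k$ has to beat the polynomial loss $1/k$ and the concentration losses, so subexponential losses are harmless. I would first try the degenerate case where $\sigma$ is supported on pure dilations $(0,s)$, which reduces the problem to a one-dimensional weighted walk on $\log y$. I would then handle the general case by restricting to a subset of $\operatorname{supp}\sigma$ where $\rho_1$ restricted to it still exceeds one, and treating the horizontal component as a harmless smearing.
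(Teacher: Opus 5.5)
Your part (i) and the failure of strong $L^1$ boundedness in part (ii) are correct and follow the paper. For (i) you use Minkowski's inequality with $\|f(\cdot h)\|_p=y(h)^{n/p}\|f\|_p$ and then a Neumann series. For the strong-$L^1$ failure you use the identity $\|R_\sigma^kf\|_1=\rho_1(\sigma)^k\|f\|_1$ for $f\ge0$.

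The gap is the failure of weak type $(1,1)$. There you give only a plan, and you never establish the step you yourself flag as the main obstacle: that $R_\sigma^kf$ is spread at height $\approx\rho_1^ke^{-km}$ over a set of measure $\approx e^{km}$. Your sketched route would need quantitative concentration of the tilted walk in the $\log y$ coordinate, something like a local limit theorem. For $\sigma$ with nontrivial horizontal components, ``harmless smearing'' is not yet an argument. As written, the weak-type claim is unproved.

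The missing idea is that you never need to locate the mass of $R_\sigma^kf$. It is enough to bound the measure of its support, and your own remark that subexponential losses are harmless is exactly what makes this work.

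\begin{enumerate}
\item Take $f=\mathbf 1_K$ with $K$ compact, and set $A_Nf=\frac1N\sum_{k=1}^NR_\sigma^kf\le\widetilde M_\sigma f$.
\item If $\widetilde M_\sigma$ were of weak type $(1,1)$, then $\|A_Nf\|_{L^{1,\infty}}\le B:=C\|f\|_1$ for all $N$. Also $0\le A_Nf\le 1$.
\item For any $h$ supported in a set $S$, split $\|h\|_1=\int_0^{\|h\|_\infty}\mu(\{|h|>\lambda\})\,d\lambda$ at $\lambda_0=B/\mu(S)$. This gives $\|h\|_1\le B\bigl(1+\log^+(\mu(S)\|h\|_\infty/B)\bigr)$.
\item Since $\sigma$ has compact support $L$, one has $\operatorname{supp}A_Nf\subset\bigcup_{k\le N}K(L^{-1})^k$. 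By left invariance of the hyperbolic metric, this lies in a ball of radius $R_K+NR_L$. By exponential volume growth of $\mathbb H^{n+1}$, it has measure at most $C_2e^{C_3N}$.
\item Hence weak type $(1,1)$ would force $\|A_Nf\|_1\le C'(1+N)$. This contradicts $\|A_Nf\|_1\ge c\,\rho_1(\sigma)^N\|f\|_1/N$.
\end{enumerate}

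This is also where compactness of $\operatorname{supp}\sigma$ is genuinely used, not merely to make $\rho_p$ finite.
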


The paper is organized as follows.  Section~\ref{sec:prelim} recalls the group law, Haar measures, and modular function.  Section~\ref{sec:trans} proves the translation estimate by reducing to the Euclidean Hardy--Littlewood maximal operator.  Section~\ref{sec:dil} treats both dilation normalizations.  Section~\ref{sec:geo} proves the fixed-geodesic bounds by reducing the local
part to a finite-type parabolic maximal theorem and by treating the large-time
tail directly; it also includes the strong \(L^1\) counterexample.  Section~\ref{sec:prob} records the Brownian and random-walk interpretations of the same modular drift.

\section{Preliminaries}\label{sec:prelim}

\subsection{The Group Structure}
Let $G_n = \mathbb{R}^n \rtimes \mathbb{R}_+$ be the isotropic affine group, i.e. the subgroup of the full affine group of $\mathbb R^n$ generated by translations and positive scalar dilations (we refer to \cite{LL,M,MT,LVY,V,WY,Z}). We parameterize elements as pairs $(x,y)$ with $x \in \mathbb{R}^n$ (the translation part) and $y > 0$ (the dilation part). The group law is given by the semidirect product law:
\begin{equation}\label{eq:group-law}
(x,y)(x',y') = (x + yx', \, yy').
\end{equation}
The identity element is $(0,1)$. The inverse of an element $(x,y)$ is
\begin{equation}\label{eq:inverse}
(x,y)^{-1} = (-y^{-1}x, \, y^{-1}).
\end{equation}
This can be verified directly:
$$
(x,y)(-y^{-1}x, y^{-1}) = (x + y(-y^{-1}x), \, y(y^{-1})) = (x - x, \, 1) = (0,1).
$$

\subsection{Visualization of Motion}
In the Poincaré half-plane model ($n=1$), these three trajectories originating from a point $(x,y)$ exhibit distinct geometric behaviors.

\begin{center}
\begin{tikzpicture}[>=stealth, scale=1.5]
    \draw[->] (-0.5,0) -- (4,0) node[right] {$x$};
    \draw[->] (0,-0.2) -- (0,3.5) node[above] {$y$};
    \node at (0,0) [below left] {$0$};

    \coordinate (P) at (1.5, 1.5);
    \filldraw (P) circle (1.5pt) node[above left] {$(x,y)$};

    \draw[blue, thick, ->] (P) -- (3, 1.5) node[midway, above] {\small Trans.};
    
    \draw[red, thick, ->] (P) -- (1.5, 3) node[midway, left] {\small Dil.};

    \draw[green!60!black, thick, ->] (1.5, 1.5) arc (130.9:60:1.985);
    \node[green!60!black] at (3.1, 2.3) {\small Geodesic};

    \node[anchor=north west, draw, fill=white, font=\footnotesize] at (3.5,3.5) {
        \begin{tabular}{ll}
        \textcolor{blue}{---} & Translation \\
        \textcolor{red}{---} & Dilation \\
        \textcolor{green!60!black}{---} & Geodesic
        \end{tabular}
    };
\end{tikzpicture}
\end{center}

\subsection{Haar Measure and Modular Function}\label{sec:2.3}
The left Haar measure $d\mu$ on $G_n$ is given by
\begin{equation}\label{eq:haar}
d\mu(x,y) = \frac{dx \, dy}{y^{n+1}},
\end{equation}
where $dx$ denotes the standard Lebesgue measure on $\mathbb{R}^n$.

Left invariance is easily checked. For $g=(x_0, y_0)$, the left translation $L_g(x,y) = (x_0+y_0x, y_0y)$ has Jacobian determinant $y_0^{n+1}$. The measure transforms as:
$$
\frac{d(x_0+y_0x) \, d(y_0y)}{(y_0y)^{n+1}} = \frac{y_0^n dx \, y_0 dy}{y_0^{n+1} y^{n+1}} = \frac{dx \, dy}{y^{n+1}}.
$$

The group $G_n$ is non-unimodular. The right Haar measure is
$$
        d\mu_R(x,y)=\frac{dx\,dy}{y}.
$$
We use the convention
$
        d\mu_R(g)=\Delta_{G_n}(g)^{-1}\,d\mu(g).
$
Thus
\begin{equation}\label{eq:modular}
        \Delta_{G_n}(x,y)=y^{-n}.
\end{equation}
If $h=(a,b)\in G_n$, then right translation satisfies
\begin{equation}\label{eq:right-translation-measure}
        d\mu(g h)=b^{-n}\,d\mu(g).
\end{equation}
Equivalently, for every non-negative measurable function $\Phi$,
\begin{equation}\label{eq:right-translation-integral}
        \int_{G_n}\Phi(g h)\,d\mu(g)
        =b^n\int_{G_n}\Phi(g)\,d\mu(g).
\end{equation}
This last identity is the form used in the estimates for right convolution operators.

\section{Translation Averages}\label{sec:trans}

The translation subgroup is
$$
        H_{\mathrm{trans}}=\{(t,1):t\in\mathbb R^n\}.
$$
Right multiplication by $(t,1)$ gives
$
        (x,y)(t,1)=(x+yt,y).
$
Thus the vertical coordinate is fixed and the horizontal coordinate is translated by the vector $yt$.

Let $B_r=\{u\in\mathbb R^n:|u|<r\}$ and let $|B_r|$ denote its Euclidean volume. For $f\in L^1_{\mathrm{loc}}(G_n)$, define the translation maximal operator
$$
\mathcal M_{\mathrm{trans}}f(x,y)
=
\sup_{r>0}\frac1{|B_r|}\int_{B_r}|f(x+yt,y)|\,dt.
$$

\begin{theorem}\label{thm:trans}
For every $n\ge1$, the operator $\mathcal M_{\mathrm{trans}}$ is of weak type $(1,1)$ and is bounded on $L^p(G_n)$ for every $1<p\le\infty$. More precisely, for $1<p\le\infty$,
$$
\norm{\mathcal M_{\mathrm{trans}}f}_{L^p(G_n)}
\le
C_{p,n}\norm{f}_{L^p(G_n)},
$$
where $C_{p,n}$ may be taken to be the $L^p(\mathbb R^n)$ norm of the Euclidean Hardy--Littlewood maximal operator.
\end{theorem}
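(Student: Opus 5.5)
The plan is to reduce everything to the Euclidean Hardy--Littlewood maximal operator $M_{\mathrm{HL}}$ on $\mathbb R^n$, acting one horizontal slice at a time. Fix $y>0$ and write $f_y(u)=f(u,y)$. Substituting $s=yt$ in the definition gives
$$
\frac1{|B_r|}\int_{B_r}|f(x+yt,y)|\,dt=\frac1{|B_{yr}|}\int_{B_{yr}}|f_y(x+s)|\,ds .
$$
As $r$ runs over $(0,\infty)$, so does $yr$. Hence
$$
\mathcal M_{\mathrm{trans}}f(x,y)=M_{\mathrm{HL}}f_y(x)
$$
exactly, for each $y$. There is one measurability point: the supremum may be restricted to rational $r$ by monotone continuity of the integral in $r$, so $\mathcal M_{\mathrm{trans}}f$ is measurable on $G_n$.

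Next I use the fiber structure of the Haar measure. By Fubini, $d\mu=dx\,\frac{dy}{y^{n+1}}$ is a product of Lebesgue measure in $x$ and the measure $\nu(dy)=y^{-n-1}dy$ in $y$. The weight depends only on $y$, which is exactly the variable frozen by the operator.

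For $1<p<\infty$ I integrate the Euclidean bound $\|M_{\mathrm{HL}}f_y\|_{L^p(dx)}\le C_{p,n}\|f_y\|_{L^p(dx)}$ against $\nu(dy)$. This gives the stated inequality with $C_{p,n}$ equal to the $L^p(\mathbb R^n)$ operator norm of $M_{\mathrm{HL}}$.

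For $p=\infty$ the pointwise bound $M_{\mathrm{HL}}f_y\le\|f_y\|_\infty$ needs a remark. For $\mu$-a.e.\ $y$ (equivalently Lebesgue-a.e.\ $y$), $\|f_y\|_{L^\infty(dx)}\le\|f\|_{L^\infty(G_n)}$. The remaining null set of $y$ values is harmless, because $\mu$ of a product with a null set of $y$ is zero.

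For weak type $(1,1)$ I apply the same slicing. By Fubini,
$$
\mu\{\mathcal M_{\mathrm{trans}}f>\lambda\}=\int_0^\infty\bigl|\{x:M_{\mathrm{HL}}f_y(x)>\lambda\}\bigr|\,\frac{dy}{y^{n+1}}\le\int_0^\infty\frac{C_n}{\lambda}\|f_y\|_{L^1(dx)}\,\frac{dy}{y^{n+1}}=\frac{C_n}{\lambda}\|f\|_{L^1(G_n)} .
$$

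Although the statement shown here does not include it, Theorem~\ref{thm:intro-trans} also asserts failure of $L^1\to L^1$ boundedness, and this follows the same way. Take $f(x,y)=\mathbf 1_{B_1}(x)\mathbf 1_{[1,2]}(y)$. Then $f\in L^1(G_n)$, while for $y\in[1,2]$ and $|x|\ge2$,
$$
M_{\mathrm{HL}}f_y(x)\gtrsim|x|^{-n},
$$
which is not integrable in $x$. So $\mathcal M_{\mathrm{trans}}f\notin L^1(G_n)$.

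There is no real obstacle here. The only step needing care is checking that the rescaling $s=yt$ leaves the supremum over $r$ unchanged, which it does because $r\mapsto yr$ is a bijection of $(0,\infty)$. That bijection is why the constants are independent of $y$ and why the non-unimodular weight $y^{-n-1}$ never interacts with the operator.
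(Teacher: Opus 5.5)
Your proposal is correct and follows the paper's proof. The same rescaling $s=yt$ gives the exact slice identity $\mathcal M_{\mathrm{trans}}f(x,y)=M_{\mathrm{HL}}f_y(x)$, and Fubini with $d\mu=dx\,dy/y^{n+1}$ transfers the Euclidean strong $L^p$ and weak $(1,1)$ bounds. Your measurability and $L^\infty$ remarks are correct details that the paper leaves implicit.

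Your aside on the failure of $L^1$ boundedness also works. It uses a single function and the non-integrability of $|x|^{-n}$ at infinity, whereas the paper uses an $\varepsilon$-family with $\varepsilon^n\log(1/\varepsilon)$ growth.
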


\begin{proof}
Fix $y>0$ and write
$
        f_y(x)=f(x,y),\ x\in\mathbb R^n.
$
For $r>0$ we have
$$
A_rf(x,y)
=
\frac1{|B_r|}\int_{B_r}|f_y(x+yt)|\,dt.
$$
Set $u=x+yt$. Then $du=y^n\,dt$, so $dt=y^{-n}\,du$. As $t$ ranges over $B_r$, the variable $u$ ranges over $B_{yr}(x)$. Since
$
        |B_r|=y^{-n}|B_{yr}|,
$
we get
\begin{align*}
A_rf(x,y)
&=
\frac1{y^{-n}|B_{yr}|}
\int_{B_{yr}(x)}|f_y(u)|y^{-n}\,du =
\frac1{|B_{yr}|}\int_{B_{yr}(x)}|f_y(u)|\,du.
\end{align*}
Taking the supremum over $r>0$ is the same as taking the supremum over $R=yr>0$. Hence
\begin{equation}\label{eq:trans-slice-reduction}
        \mathcal M_{\mathrm{trans}}f(x,y)
        =M_{\mathrm{HL}}^{(n)}(f_y)(x),
\end{equation}
where $M_{\mathrm{HL}}^{(n)}$ is the centered Euclidean Hardy--Littlewood maximal operator on $\mathbb R^n$.

For $1<p<\infty$, Fubini's theorem and the classical Hardy--Littlewood theorem give
\begin{align*}
\norm{\mathcal M_{\mathrm{trans}}f}_{L^p(G_n)}^p
&=
\int_0^\infty
\int_{\mathbb R^n}|M_{\mathrm{HL}}^{(n)}f_y(x)|^p\,dx\,\frac{dy}{y^{n+1}} \\
&\le
C_{p,n}^p
\int_0^\infty
\int_{\mathbb R^n}|f(x,y)|^p\,dx\,\frac{dy}{y^{n+1}} \\
&=
C_{p,n}^p
\norm{f}_{L^p(G_n)}^p.
\end{align*}
The case $p=\infty$ is immediate from the definition.

The weak-type endpoint is just as direct. For $\lambda>0$, using \eqref{eq:trans-slice-reduction} and the Euclidean weak $(1,1)$ estimate,
\begin{align*}
\mu\{(x,y):\mathcal M_{\mathrm{trans}}f(x,y)>\lambda\}
&=
\int_0^\infty
\bigl|\{x\in\mathbb R^n:M_{\mathrm{HL}}^{(n)}f_y(x)>\lambda\}\bigr|
\frac{dy}{y^{n+1}} \\
&\le
\frac{C_n}{\lambda}
\int_0^\infty
\int_{\mathbb R^n}|f(x,y)|\,dx\,\frac{dy}{y^{n+1}} \\
&=
\frac{C_n}{\lambda}\norm{f}_{L^1(G_n)}.
\end{align*}
This proves the theorem.
\end{proof}

\begin{proposition}
\label{prop:trans-not-L1}
The translation maximal operator \(\mathcal M_{\mathrm{trans}}\) is not bounded
from \(L^1(G_n)\) to \(L^1(G_n)\).
\end{proposition}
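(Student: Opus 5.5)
We use the slice reduction \eqref{eq:trans-slice-reduction}, $\mathcal M_{\mathrm{trans}}f(x,y)=M^{(n)}_{\mathrm{HL}}(f_y)(x)$, and take a test function that factors as a Euclidean bump in $x$ times a cutoff in $y$. Let $\varphi=\mathbf 1_{B_1}$ on $\mathbb R^n$ and $\psi=\mathbf 1_{[1,2]}$ on $\mathbb R_+$, and set
$$
f(x,y)=\varphi(x)\psi(y).
$$
Then
$$
\norm{f}_{L^1(G_n)}=|B_1|\int_1^2 y^{-n-1}\,dy<\infty,
$$
so $f\in L^1(G_n)$. For each $y\in[1,2]$ the slice is $f_y=\varphi$, and for $y\notin[1,2]$ it vanishes. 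Hence
$$
\mathcal M_{\mathrm{trans}}f(x,y)=\psi(y)\,M^{(n)}_{\mathrm{HL}}\varphi(x).
$$

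Next we bound $M^{(n)}_{\mathrm{HL}}\varphi$ from below away from the support. For $|x|\ge 2$, the centered ball $B_{2|x|}(x)$ contains $B_1$. Therefore
$$
M^{(n)}_{\mathrm{HL}}\varphi(x)\ge \frac{|B_1|}{|B_{2|x|}|}=c_n|x|^{-n}.
$$

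Combining the two steps by Fubini,
$$
\norm{\mathcal M_{\mathrm{trans}}f}_{L^1(G_n)}\ge \int_1^2 y^{-n-1}\,dy\int_{|x|\ge2}c_n|x|^{-n}\,dx=\infty,
$$
since $|x|^{-n}$ is not integrable at infinity in $\mathbb R^n$. Thus $f\in L^1(G_n)$ while $\mathcal M_{\mathrm{trans}}f\notin L^1(G_n)$, so no bound $\norm{\mathcal M_{\mathrm{trans}}f}_{L^1}\le C\norm f_{L^1}$ can hold.

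The only point that needs care is that the weight $y^{-n-1}$ does not interfere. This is handled by choosing the $y$-support to be compact in $(0,\infty)$, so the weight is bounded above and below there. The same divergence could also be expressed quantitatively by truncating to $|x|\le R$, which gives a lower bound of order $\log R$ against a fixed $\norm f_{L^1}$, but the divergent integral already suffices.
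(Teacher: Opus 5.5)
Your proof is correct, but it takes a different route from the paper's. You use a single fixed function $f=\mathbf 1_{B_1}(x)\mathbf 1_{[1,2]}(y)$ and the non-integrability at infinity of the tail $c_n|x|^{-n}$ of $M^{(n)}_{\mathrm{HL}}\mathbf 1_{B_1}$. This produces an $f\in L^1(G_n)$ with $\mathcal M_{\mathrm{trans}}f\notin L^1(G_n)$ outright. The paper instead uses the shrinking family $f_\varepsilon=\mathbf 1_{\{|x|<\varepsilon\}}(x)\mathbf 1_{[1,2]}(y)$. It bounds $M^{(n)}_{\mathrm{HL}}\mathbf 1_{B(0,\varepsilon)}(x)\gtrsim\varepsilon^n/|x|^n$ only on the bounded annulus $2\varepsilon<|x|<1$, and gets $\|\mathcal M_{\mathrm{trans}}f_\varepsilon\|_{L^1}/\|f_\varepsilon\|_{L^1}\gtrsim\log(1/\varepsilon)$.

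Your argument is shorter and gives a qualitatively stronger conclusion: the operator does not even map $L^1$ into $L^1$, rather than merely failing to be bounded. The paper's argument shows the failure is \emph{local}. It persists even when $\mathcal M_{\mathrm{trans}}f_\varepsilon$ is integrated only over the bounded set $\{|x|<1,\ 1\le y\le 2\}$, which is the $L\log L$-type obstruction. That local mechanism is the template reused in Proposition~\ref{prop:geo_not_l1}. There, the geodesic from $(x,y)$ moves horizontally by less than $y$, and the large-time part is strongly $L^1$-bounded (Proposition~\ref{prop:geo-tail-L1}). So a decay-at-infinity argument like yours is not available for the geodesic operator, and the concentration argument is the one that carries over.
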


\begin{proof}
We use the same slice reduction as above and a standard test function for the
Euclidean Hardy--Littlewood maximal operator.  Let \(0<\varepsilon<1/4\), and set
$$
        f_\varepsilon(x,y)
        =
        \mathbf 1_{\{|x|<\varepsilon\}}(x)
        \mathbf 1_{[1,2]}(y).
$$
Then
$$
        \|f_\varepsilon\|_{L^1(G_n)}
        =
        |B(0,\varepsilon)|
        \int_1^2 \frac{dy}{y^{n+1}}
        \simeq_n \varepsilon^n .
$$
For each fixed \(y\in[1,2]\), the horizontal slice is
\(
        (f_\varepsilon)_y=\mathbf 1_{B(0,\varepsilon)}.
\)
If \(2\varepsilon<|x|<1\), choose the Euclidean ball centered at \(x\) with
radius \(|x|+\varepsilon\).  This ball contains \(B(0,\varepsilon)\).  Hence
$$
        M_{\mathrm{HL}}^{(n)}\mathbf 1_{B(0,\varepsilon)}(x)
        \ge
        \frac{|B(0,\varepsilon)|}{|B(x,|x|+\varepsilon)|}
        \ge
        c_n\frac{\varepsilon^n}{|x|^n}.
$$
Using the exact identity
\(
        \mathcal M_{\mathrm{trans}}f(x,y)
        =M_{\mathrm{HL}}^{(n)}(f_y)(x),
\)
we obtain
$$
\begin{aligned}
        \|\mathcal M_{\mathrm{trans}}f_\varepsilon\|_{L^1(G_n)}
        &\ge
        c_n
        \int_1^2 \frac{dy}{y^{n+1}}
        \int_{2\varepsilon<|x|<1}
        \frac{\varepsilon^n}{|x|^n}\,dx                                      
        \ge
        c_n\varepsilon^n\log\frac1\varepsilon .
\end{aligned}
$$
If a strong \(L^1\) estimate held, then the last lower bound would be bounded
by \(C\|f_\varepsilon\|_{L^1(G_n)}\le C_n\varepsilon^n\) uniformly in
\(\varepsilon\).  This is impossible as \(\varepsilon\downarrow0\).  Therefore
\(\mathcal M_{\mathrm{trans}}\) is not strongly bounded on \(L^1(G_n)\).
\end{proof}

\section{Dilation Averages}\label{sec:dil}

The dilation subgroup is
$$
        H_{\mathrm{dil}}=\{(0,e^t):t\in\mathbb R\}.
$$
Right multiplication by $(0,e^t)$ gives
$
        (x,y)(0,e^t)=(x,ye^t).
$
The parameter $t$ is therefore the logarithmic vertical displacement.

\subsection{The failure of standard averaging via Lebesgue measure}

We first record that the unweighted average in the $t$-parameter is not compatible with the left Haar measure. Define
$$
M_{\mathrm{Leb}}f(x,y)
=
\sup_{r>0}\frac1r\int_0^r |f(x,ye^t)|\,dt.
$$

\begin{lemma}\label{lem:growth}
For every $n\ge1$ and every $1\le p<\infty$,
$$
        \int_0^\infty \frac{e^{nu}}{(1+u)^p}\,du=\infty.
$$
\end{lemma}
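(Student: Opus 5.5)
We show that the integrand does not tend to zero, and in fact tends to $+\infty$, so the integral over $[0,\infty)$ cannot be finite. Since $n\ge1$ and the integrand is nonnegative and continuous, it suffices to bound it from below on a tail $[U,\infty)$ by a function whose integral clearly diverges.

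\emph{Step 1: an elementary comparison.} We use $e^{s}\ge s^k/k!$ for $s\ge0$ and any integer $k\ge0$. Choose an integer $k>p$. Then for $u\ge0$,
$$
e^{nu}\ge e^{u}\ge \frac{u^{k}}{k!}.
$$
For $u\ge1$ we have $1+u\le 2u$, hence $(1+u)^p\le 2^p u^p$. Therefore, for $u\ge1$,
$$
\frac{e^{nu}}{(1+u)^p}\ge \frac{u^{k-p}}{2^p\,k!}\ge \frac{1}{2^p\,k!},
$$
where the last inequality uses $k-p>0$ and $u\ge1$.

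\emph{Step 2: conclusion.} The integrand is therefore bounded below by a positive constant on $[1,\infty)$. Consequently
$$
\int_0^\infty \frac{e^{nu}}{(1+u)^p}\,du\ \ge\ \int_1^R \frac{du}{2^p\,k!}=\frac{R-1}{2^p\,k!}\longrightarrow\infty
$$
as $R\to\infty$.

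There is no genuine obstacle here. The only point requiring care is that the bound is uniform in the relevant parameters: it holds for every $n\ge1$, because $e^{nu}\ge e^u$, and for every finite $p\ge1$, because $k$ is chosen with $k>p$. In the sequel this lemma will be used to show that, after the substitution $y\mapsto ye^{t}$, the Haar weight $y^{-n-1}$ produces the exponential factor $e^{nu}$ that defeats the polynomial decay $(1+u)^{-p}$ of $(M_{\mathrm{Leb}}f)^p$.
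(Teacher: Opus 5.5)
Your proof is correct and follows the same idea as the paper: you bound the integrand below by a positive constant on a tail $[U,\infty)$, so the integral diverges. The only difference is that you check that the exponential beats the polynomial quantitatively, via $e^{nu}\ge e^{u}\ge u^k/k!$ with an integer $k>p$, which gives the explicit threshold $U=1$ and lower bound $1/(2^pk!)$, whereas the paper argues qualitatively from $\log\phi(u)=u\bigl(n-p\log(1+u)/u\bigr)\to\infty$.
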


\begin{proof}
Let
$$
        \phi(u)=\frac{e^{nu}}{(1+u)^p}.
$$
Then
$$
        \log\phi(u)=nu-p\log(1+u)
        =u\left(n-p\frac{\log(1+u)}u\right).
$$
Since $\log(1+u)/u\to0$ as $u\to\infty$, the quantity in parentheses tends to $n>0$. Hence $\log\phi(u)\to\infty$, and so $\phi(u)\to\infty$. In particular, $\phi(u)\ge1$ for all sufficiently large $u$, and the integral diverges.
\end{proof}

\begin{proposition}\label{prop:leb_fail}
For every $n\ge1$ and every $1\le p<\infty$, the operator $M_{\mathrm{Leb}}$ is not bounded on $L^p(G_n)$.
\end{proposition}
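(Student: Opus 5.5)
We argue by contradiction, using a test function concentrated near $y=1$ whose maximal function spreads down to small $y$, where the Haar weight $y^{-n-1}$ is large; Lemma~\ref{lem:growth} is exactly the divergence this produces. We reduce to the one-dimensional picture in the variable $s=\log y$. In these coordinates $d\mu=e^{-ns}\,dx\,ds$, and $M_{\mathrm{Leb}}$ acts on each vertical fibre as the one-sided forward maximal average
\[
\sup_{r>0}\frac1r\int_0^r|F(x,s+t)|\,dt,\qquad F(x,s)=f(x,e^s).
\]
The weight $e^{-ns}$ blows up as $s\to-\infty$, i.e.\ as $y\to0$. Points with $s$ very negative still see the data at $s\approx0$ through the forward average with radius $r\approx|s|$. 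The gain of $1/r$ is only polynomial, while the weight $e^{n|s|}$ is exponential.

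Concretely, take $f=\mathbf 1_{\{|x|<1\}}\mathbf 1_{[1,e]}(y)$, so $F=\mathbf 1_{\{|x|<1\}}\mathbf 1_{[0,1]}(s)$ and $\|f\|_{L^p(G_n)}<\infty$. Fix $|x|<1$ and $y=e^{-u}$ with $u>0$. Choosing $r=u+1$, the interval $[0,r]$ of $t$ contains the unit interval $t\in[u,u+1]$ on which $s+t\in[0,1]$. Hence
\[
M_{\mathrm{Leb}}f(x,e^{-u})\ge \frac1{1+u}.
\]
Then
\[
\|M_{\mathrm{Leb}}f\|_{L^p(G_n)}^p\ge |B_1|\int_0^\infty \frac{e^{nu}}{(1+u)^p}\,du,
\]
using $dy/y^{n+1}=e^{nu}\,du$ under $y=e^{-u}$. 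This integral diverges by Lemma~\ref{lem:growth}, so $M_{\mathrm{Leb}}f\notin L^p(G_n)$ while $f\in L^p(G_n)$. This contradicts boundedness for every $1\le p<\infty$.

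The only step needing care is the change of variables between $dy/y^{n+1}$ and $e^{nu}\,du$. One must check that the sign convention makes small $y$ carry the large weight, and that right dilation by $e^t$ with $t\ge0$ moves from $y=e^{-u}$ upward into the support of $f$. Both hold: $ye^t=e^{t-u}\in[1,e]$ exactly when $t\in[u,u+1]$.
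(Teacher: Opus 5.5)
Your proposal is correct and follows the paper's proof essentially verbatim. It uses the same indicator test function (a ball in $x$ in place of the paper's cube $[0,1]^n$, which is immaterial), the same radius $r=1+\log(1/y)$ giving $M_{\mathrm{Leb}}f(x,y)\ge(1+\log(1/y))^{-1}$, and the same substitution $y=e^{-u}$ reducing the divergence to Lemma~\ref{lem:growth}.
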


\begin{proof}
Let
$
        K=[0,1]^n\times[1,e]
$
and set $f=\mathbf 1_K$. Then
$$
\norm{f}_{L^p(G_n)}^p
=
\int_{[0,1]^n}\int_1^e \frac{dy}{y^{n+1}}\,dx
=
\frac{1-e^{-n}}n<\infty.
$$

Now take $x\in[0,1]^n$ and $0<y<1$. The condition $f(x,ye^t)=1$ is equivalent to
$$
        1\le ye^t\le e,
$$
or
$$
        \log(1/y)\le t\le 1+\log(1/y).
$$
Put $T_y=\log(1/y)>0$ and choose $r=T_y+1$. Then
$$
\frac1r\int_0^r f(x,ye^t)\,dt
=
\frac1{T_y+1}\int_{T_y}^{T_y+1}1\,dt
=
\frac1{1+\log(1/y)}.
$$
Thus
$$
        M_{\mathrm{Leb}}f(x,y)
        \ge
        \frac1{1+\log(1/y)}
        =
        \frac1{1-\log y}
$$
for $x\in[0,1]^n$ and $0<y<1$.
Therefore
\begin{align*}
\norm{M_{\mathrm{Leb}}f}_{L^p(G_n)}^p
&\ge
\int_{[0,1]^n}\int_0^1
\frac1{(1-\log y)^p}\frac{dy}{y^{n+1}}\,dx.
\end{align*}
With the change of variables $u=-\log y$, so that $dy/y=-du$, this lower bound becomes
$$
        \int_0^\infty \frac{e^{nu}}{(1+u)^p}\,du,
$$
which is infinite by Lemma~\ref{lem:growth}. Hence $M_{\mathrm{Leb}}f\notin L^p(G_n)$.
\end{proof}

\begin{proposition}
\label{prop:leb-weak-fail}
For every \(n\ge1\), the operator \(M_{\mathrm{Leb}}\) is not of weak type
\((1,1)\) on \(G_n\).
\end{proposition}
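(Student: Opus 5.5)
We reuse the test function $f=\mathbf 1_K$, $K=[0,1]^n\times[1,e]$, from the proof of Proposition~\ref{prop:leb_fail}. It has $\|f\|_{L^1(G_n)}=(1-e^{-n})/n<\infty$. That proof also established the pointwise lower bound
$$
M_{\mathrm{Leb}}f(x,y)\ge\frac{1}{1-\log y},\qquad x\in[0,1]^n,\ 0<y<1.
$$
The plan is to show that the level sets $\{M_{\mathrm{Leb}}f>\lambda\}$ have infinite Haar measure for every small $\lambda>0$. This contradicts any inequality of the form $\mu\{M_{\mathrm{Leb}}f>\lambda\}\le C\lambda^{-1}\|f\|_{L^1(G_n)}$.

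Fix $0<\lambda<1$. The lower bound gives $M_{\mathrm{Leb}}f(x,y)>\lambda$ whenever $1-\log y<1/\lambda$, that is, whenever
$$
e^{1-1/\lambda}<y<1 .
$$
Since $\lambda<1$, this interval is nonempty. Hence
$$
\mu\{M_{\mathrm{Leb}}f>\lambda\}\ge \int_{[0,1]^n}\int_{e^{1-1/\lambda}}^{1}\frac{dy}{y^{n+1}}\,dx=\frac{e^{n(1/\lambda-1)}-1}{n}.
$$
This lower bound is finite, so the level set is not literally of infinite measure. It does, however, grow exponentially in $1/\lambda$. A weak-type inequality would require
$$
\frac{e^{n(1/\lambda-1)}-1}{n}\le \frac{C(1-e^{-n})}{n\lambda}
$$
for all $\lambda\in(0,1)$. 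The left side is exponential in $1/\lambda$ and the right side is only linear, so letting $\lambda\downarrow0$ gives the contradiction. This already proves the proposition.

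An optional strengthening, if one wants literally infinite level sets, is to replace $f$ by a superposition of dilated copies of $\mathbf 1_K$ with summable $L^1$ norms. I will skip it because the exponential-versus-linear comparison suffices.

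The only point needing care is the order of quantifiers. The weak $(1,1)$ constant $C$ must be uniform in $\lambda$, and the test function is fixed. With that in place the argument is elementary, and there is no real obstacle beyond checking the computation of the $y$-integral.
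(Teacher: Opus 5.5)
Your argument is correct and is essentially the paper's own proof: you use the same test function $\mathbf 1_{[0,1]^n\times[1,e]}$, the same bound $\mu\{M_{\mathrm{Leb}}f>\lambda\}\ge (e^{n(1/\lambda-1)}-1)/n$ from the range $e^{1-1/\lambda}<y<1$, and the same exponential-versus-linear contradiction as $\lambda\downarrow0$. Your opening plan of infinite level sets is unnecessary and is rightly dropped. The level sets really are finite, because $M_{\mathrm{Leb}}f(x,y)=1/(1-\log y)$ exactly for $x\in[0,1]^n$, $0<y<1$, and $M_{\mathrm{Leb}}f$ vanishes for $y>e$; note that finiteness of a lower bound does not by itself show this.
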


\begin{proof}
Use the same function as in the previous proof,
$
        f=\mathbf 1_{[0,1]^n\times[1,e]} .
$
Then \(f\in L^1(G_n)\).  The previous proof showed that, for
\(x\in[0,1]^n\) and \(0<y<1\),
$$
        M_{\mathrm{Leb}}f(x,y)
        \ge
        \frac1{1+\log(1/y)} .
$$
Let \(0<\lambda<1/2\).  If
$$
        0<\log(1/y)<\frac1\lambda-1,
$$
then
\(
        (1+\log(1/y))^{-1}>\lambda.
\)
Equivalently,
\(
        e^{1-1/\lambda}<y<1.
\)
Therefore
$$
\begin{aligned}
        \mu\{(x,y):M_{\mathrm{Leb}}f(x,y)>\lambda\}
        &\ge
        \int_{[0,1]^n}\int_{e^{1-1/\lambda}}^1
        \frac{dy}{y^{n+1}}\,dx                                      
        =
        \frac{e^{n(1/\lambda-1)}-1}{n} .
\end{aligned}
$$
Consequently
$$
        \lambda\,
        \mu\{(x,y):M_{\mathrm{Leb}}f(x,y)>\lambda\}
        \ge
        \frac{\lambda}{n}
        \left(e^{n(1/\lambda-1)}-1\right),
$$
and the right hand side tends to infinity as \(\lambda\downarrow0\).  Since
\(\|f\|_{L^1(G_n)}<\infty\) is fixed, no weak type \((1,1)\) estimate can hold.
\end{proof}

\subsection{The Haar-compatible dilation maximal operator}

The correct normalization for right dilation averages is obtained from the modular factor. Since
$$
        \Delta(0,e^t)=e^{-nt},
$$
we average against $e^{-nt}\,dt$. Define
$$
        V_n(r)=\int_0^r e^{-nt}\,dt=\frac{1-e^{-nr}}n.
$$

For $f\in L^1_{\mathrm{loc}}(G_n)$, define Haar-compatible dilation maximal operator
$$
\mathcal M_{\mathrm{dil}}f(x,y)
=
\sup_{r>0}\frac1{V_n(r)}\int_0^r |f(x,ye^t)|e^{-nt}\,dt.
$$
The point of this normalization is made transparent by logarithmic coordinates. Put
$$
        u=\log y,
        \qquad
        F(x,u)=f(x,e^u).
$$
Then
\begin{equation}\label{eq:haar-log}
        d\mu(x,y)=\frac{dx\,dy}{y^{n+1}}
        =e^{-nu}\,dx\,du.
\end{equation}
Let
$$
        d\nu(u)=e^{-nu}\,du.
$$
For a function $g$ on $\mathbb R$, define the one-sided maximal operator
$$
        M_\nu^+g(u)
        =
        \sup_{r>0}
        \frac1{\nu([u,u+r])}
        \int_u^{u+r}|g(v)|\,d\nu(v).
$$
Since
$$
\int_u^{u+r}|F(x,v)|e^{-nv}\,dv
=
e^{-nu}\int_0^r |F(x,u+t)|e^{-nt}\,dt
$$
and
$$
        \nu([u,u+r])=e^{-nu}V_n(r),
$$
we have the exact identity
\begin{equation}\label{eq:dilation-reduction}
        \mathcal M_{\mathrm{dil}}f(x,e^u)
        =
        M_\nu^+(F(x,\cdot))(u).
\end{equation}

\begin{lemma}\label{lem:weighted-one-sided-HL}
The operator $M_\nu^+$ is of weak type $(1,1)$ on $(\mathbb R,d\nu)$ and is bounded on $L^p(\mathbb R,d\nu)$ for every $1<p\le\infty$.
\end{lemma}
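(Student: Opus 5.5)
We prove the weak type $(1,1)$ estimate by a one-sided rising-sun (Riesz) covering argument that works for any locally finite Borel measure on $\mathbb R$ that has no atoms. The $L^p$ bounds then follow by Marcinkiewicz interpolation with the trivial $L^\infty$ bound.

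Fix $g\in L^1(d\nu)$ and $\lambda>0$, and let $E_\lambda=\{u:M_\nu^+g(u)>\lambda\}$. The key step is to show that $E_\lambda$ is the union of the bounded intervals on which
\[
\Phi(u)=\int_{-\infty}^u|g|\,d\nu-\lambda\,\nu((-\infty,u])
\]
fails to be nondecreasing from the right. Concretely, $u\in E_\lambda$ if and only if there is $s>u$ with
\[
\int_u^s |g|\,d\nu>\lambda\,\nu([u,s]),
\]
that is, $\Phi(s)>\Phi(u)$ (with the obvious modification at $u$ if needed, since $\nu$ has no atoms).

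There is one point to handle before $\Phi$ makes sense. Since $\nu((-\infty,u])=\infty$ for the weight $e^{-nu}$, I will instead normalize at a base point $a$:
\[
\Phi_a(u)=\int_a^u|g|\,d\nu-\lambda\,\nu([a,u]).
\]
This is continuous in $u$ because $\nu$ is absolutely continuous and $|g|\,d\nu$ is finite.

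With this normalization, $E_\lambda$ is exactly the set of points $u$ for which some $s>u$ has $\Phi_a(s)>\Phi_a(u)$. This is the rising-sun set for the continuous function $\Phi_a$. By the Riesz rising sun lemma it is open and equals a countable union of disjoint intervals $(\alpha_k,\beta_k)$. On each bounded component we have $\Phi_a(\alpha_k)\le \Phi_a(\beta_k)$, which gives
\[
\lambda\,\nu((\alpha_k,\beta_k))\le \int_{\alpha_k}^{\beta_k}|g|\,d\nu .
\]
Summing over $k$ yields $\nu(E_\lambda)\le \lambda^{-1}\|g\|_{L^1(\nu)}$, which is weak $(1,1)$ with constant $1$.

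The main obstacle is the possibility of unbounded components, namely components of the form $(\alpha,\infty)$. On such a component $\Phi_a$ cannot have a "right" endpoint at which to compare values. I would handle this by truncation. Replace $g$ by its restriction to a compact interval $[-N,N]$, or compute $M^+$ only with $r\le R$. For $u$ large the average cannot exceed $\lambda$ once $s$ passes the support, because $\nu([u,s])$ grows while the integral stays fixed. So for the truncated function the components are bounded on the right, or $\Phi_a(\infty)$ exists and dominates. Monotone convergence in $N$ and in $R$ then gives the estimate for general $g$. Similarly, a left-unbounded component $(-\infty,\beta)$ has infinite $\nu$-measure, so it would contradict $g\in L^1$. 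I will verify this via the limit as $u\to-\infty$, where $\Phi_a(u)\to+\infty$ because $\lambda\nu([u,a])\to\infty$. This makes the rising-sun set bounded on the left.

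Finally, $\|M_\nu^+g\|_{L^\infty}\le\|g\|_{L^\infty}$ is immediate from the definition, and $M_\nu^+$ is sublinear. Marcinkiewicz interpolation between the weak $(1,1)$ and the $L^\infty$ bounds then gives $L^p(\mathbb R,d\nu)$ boundedness for $1<p<\infty$, with constant $2\bigl(p/(p-1)\bigr)^{1/p}$.
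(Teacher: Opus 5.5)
Your proof is correct, but it takes a different route from the paper's. The paper substitutes $s=e^{-nu}/n$. This carries $d\nu$ to Lebesgue measure on $(0,\infty)$ and $[u,u+r]$ to $[se^{-nr},s]$, so $M_\nu^+$ becomes the left-sided Hardy--Littlewood maximal function on the half-line. That function is dominated by the uncentered maximal function of $G$ extended by zero, and the paper then quotes the classical weak $(1,1)$ and $L^p$ theorems.

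You instead prove the weak $(1,1)$ bound directly via the rising sun lemma for $\Phi_a$, and then apply Marcinkiewicz. This gives the constant $1$; summing over components even gives the refined bound $\lambda\,\nu(E_\lambda)\le\int_{E_\lambda}|g|\,d\nu$. The two proofs rest on the same fact. The paper's variable is exactly $s=\nu([u,\infty))$, the tail distribution function of $\nu$, and one-sided interval averages are preserved under this monotone reparametrization. That is why your argument works for any non-atomic locally finite measure.

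What the paper's substitution buys is that the behaviour at $\pm\infty$ is absorbed automatically. In your approach it must be checked by hand, and you do check it correctly:
\begin{itemize}
\item $\Phi_a(u)\to+\infty$ as $u\to-\infty$, which rules out left-unbounded components.
\item $\nu([a,\infty))<\infty$ makes $\lim_{u\to\infty}\Phi_a(u)$ exist, so on a component $(\alpha,\infty)$ the rising-sun comparison $\Phi_a(\alpha)\le\Phi_a(+\infty)$ still holds.
\item Alternatively, replacing $g$ by $g\mathbf 1_{[-N,N]}$ puts the level set inside $(-\infty,N)$, and monotone convergence finishes.
\end{itemize}

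Two small corrections. For $u<a$, $\Phi_a(u)$ must be read with oriented integrals, $\Phi_a(u)=-\int_u^a|g|\,d\nu+\lambda\,\nu([u,a])$. Second, drop the alternative truncation $r\le R$. It does not confine the level set, and it destroys the characterization of $E_\lambda$ as a rising-sun set. Only the support truncation, or the limit at $+\infty$, does the job.
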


\begin{proof}
Make the change of variables
$$
        s=\frac{e^{-nu}}n.
$$
Then $ds=-e^{-nu}\,du=-d\nu(u)$. Thus, up to reversal of orientation, $d\nu(u)$ becomes Lebesgue measure $ds$ on $(0,\infty)$. If
$$
        G(s)=g\left(-\frac1n\log(ns)\right),
$$
then
$$
        \int_{\mathbb R}|g(u)|^p\,d\nu(u)
        =
        \int_0^\infty |G(s)|^p\,ds
$$
for $1\le p<\infty$, and the corresponding $L^\infty$ norms are equal.

Moreover, the interval $[u,u+r]$ is transformed into the interval
$
        [se^{-nr},s]
        \subset(0,\infty).
$
Therefore
$$
        M_\nu^+g(u)
        =
        \sup_{0<a<s}\frac1{s-a}\int_a^s |G(\sigma)|\,d\sigma.
$$
The right-hand side is the usual one-sided Hardy--Littlewood maximal function on the half-line, applied to $G$ and evaluated at $s$. Extending $G$ by zero to the whole real line, it is dominated by the usual uncentered Hardy--Littlewood maximal operator on $\mathbb R$. The classical weak $(1,1)$ and strong $L^p$, $1<p\le\infty$, estimates therefore give the desired bounds after changing variables back.
\end{proof}

\begin{theorem}\label{thm:dil}
The Haar-compatible dilation maximal operator $\mathcal M_{\mathrm{dil}}$ is of weak type $(1,1)$ and is bounded on $L^p(G_n)$ for every $1<p\le\infty$.
\end{theorem}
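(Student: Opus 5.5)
The plan is to combine the exact identity \eqref{eq:dilation-reduction} with Lemma~\ref{lem:weighted-one-sided-HL} and Fubini's theorem in the horizontal variable, exactly as in the proof of Theorem~\ref{thm:trans}. First I pass to logarithmic coordinates $u=\log y$ and write $F(x,u)=f(x,e^u)$. By \eqref{eq:haar-log}, the measure $\mu$ becomes the product measure $dx\otimes d\nu(u)$ on $\mathbb R^n\times\mathbb R$. Consequently
$$
\|f\|_{L^p(G_n)}^p=\int_{\mathbb R^n}\|F(x,\cdot)\|_{L^p(d\nu)}^p\,dx .
$$
Measurability of $\mathcal M_{\mathrm{dil}}f$ can be handled by restricting the supremum to rational $r$. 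This is legitimate because the average is continuous in $r$ for locally integrable $f$.

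For $1<p<\infty$, I fix $x$ and apply \eqref{eq:dilation-reduction}, which gives $\mathcal M_{\mathrm{dil}}f(x,e^u)=M_\nu^+(F(x,\cdot))(u)$. Lemma~\ref{lem:weighted-one-sided-HL} then yields
$$
\int_{\mathbb R}|\mathcal M_{\mathrm{dil}}f(x,e^u)|^p\,d\nu(u)\le C_p^p\int_{\mathbb R}|F(x,u)|^p\,d\nu(u).
$$
Integrating in $x$ and undoing the change of variables gives $\|\mathcal M_{\mathrm{dil}}f\|_{L^p(G_n)}\le C_p\|f\|_{L^p(G_n)}$. The case $p=\infty$ is immediate, since each average is a probability average against $e^{-nt}\,dt/V_n(r)$ and is therefore bounded by $\|f\|_\infty$. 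One small point needs care here: $L^\infty(\mu)$ null sets correspond to $dx\,du$-null sets, so for a.e.\ $x$ the slice bound holds a.e.\ in $u$.

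For the weak type $(1,1)$ estimate, I write the distribution function slice by slice:
$$
\mu\{\mathcal M_{\mathrm{dil}}f>\lambda\}=\int_{\mathbb R^n}\nu\{u:M_\nu^+(F(x,\cdot))(u)>\lambda\}\,dx .
$$
The weak $(1,1)$ bound of Lemma~\ref{lem:weighted-one-sided-HL} on each slice bounds the inner quantity by $\frac{C}{\lambda}\int|F(x,u)|\,d\nu(u)$. Integrating in $x$ then gives $\frac{C}{\lambda}\|f\|_{L^1(G_n)}$.

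There is no real obstacle. The only point requiring attention is that the reduction \eqref{eq:dilation-reduction} holds pointwise for every $x$ and every slice $F(x,\cdot)\in L^1_{\mathrm{loc}}(d\nu)$. By Fubini, this holds for a.e.\ $x$ whenever $f\in L^p(G_n)$, so the slicewise application of the lemma is justified.
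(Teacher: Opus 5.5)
Your proposal is correct and follows the paper's proof: the paper also combines the slice identity \eqref{eq:dilation-reduction} with Lemma~\ref{lem:weighted-one-sided-HL} and Fubini in $x$, both for the strong $L^p$ bound and for the slicewise distribution-function estimate giving weak type $(1,1)$. Your extra remarks, on measurability of the supremum and on handling $L^\infty$ null sets slice by slice, are valid refinements of steps the paper treats as immediate.
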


\begin{proof}
For $1<p<\infty$, combine \eqref{eq:dilation-reduction}, Lemma~\ref{lem:weighted-one-sided-HL}, and Fubini's theorem:
\begin{align*}
\norm{\mathcal M_{\mathrm{dil}}f}_{L^p(G_n)}^p
&=
\int_{\mathbb R^n}\int_{\mathbb R}
|M_\nu^+(F(x,\cdot))(u)|^p\,d\nu(u)\,dx \\
&\le
C_p^p
\int_{\mathbb R^n}\int_{\mathbb R}|F(x,u)|^p\,d\nu(u)\,dx \\
&=
C_p^p\norm{f}_{L^p(G_n)}^p.
\end{align*}
The case $p=\infty$ is immediate from the definition.

For the weak-type endpoint, again by \eqref{eq:dilation-reduction} and Lemma~\ref{lem:weighted-one-sided-HL},
\begin{align*}
\mu\{(x,y):\mathcal M_{\mathrm{dil}}f(x,y)>\lambda\}
&=
\int_{\mathbb R^n}
\nu\{u:M_\nu^+(F(x,\cdot))(u)>\lambda\}\,dx \\
&\le
\frac{C}{\lambda}
\int_{\mathbb R^n}\int_{\mathbb R}|F(x,u)|\,d\nu(u)\,dx \\
&=
\frac{C}{\lambda}\norm{f}_{L^1(G_n)}.
\end{align*}
This proves weak type $(1,1)$.
\end{proof}

\subsection{\texorpdfstring{Failure of strong type $L^1$}{Failure of strong type L1}}

The weak endpoint just proved does not imply strong $L^1$ boundedness. In fact strong type $(1,1)$ fails.

\begin{proposition}\label{prop:dil_endpoint_fail}
The operator $\mathcal M_{\mathrm{dil}}$ is not bounded from $L^1(G_n)$ to $L^1(G_n)$.
\end{proposition}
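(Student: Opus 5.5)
We will follow the pattern of Proposition~\ref{prop:trans-not-L1}: take a test function with small mass concentrated in a thin vertical layer, and use the exact reduction \eqref{eq:dilation-reduction} to turn the problem into a one-variable computation for $M_\nu^+$. The change of variables $s=e^{-nu}/n$ from the proof of Lemma~\ref{lem:weighted-one-sided-HL} converts that computation into the classical one-sided Hardy--Littlewood maximal function on $(0,\infty)$. There the failure of strong $L^1$ is the familiar logarithmic divergence.

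Concretely, fix $0<\varepsilon<1$ and set
$$
f_\varepsilon(x,y)=\mathbf 1_{[0,1]^n}(x)\,\mathbf 1_{[1,e^{\varepsilon}]}(y).
$$
By \eqref{eq:haar-log}, its norm is $\|f_\varepsilon\|_{L^1(G_n)}=\int_0^\varepsilon e^{-nu}\,du\simeq\varepsilon$.

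Now take $x\in[0,1]^n$ and $u=\log y<0$. Choose $r=\varepsilon-u$, so that $[u,u+r]=[u,\varepsilon]\supset[0,\varepsilon]$. Then \eqref{eq:dilation-reduction} gives
$$
\mathcal M_{\mathrm{dil}}f_\varepsilon(x,e^u)\ge \frac{\nu([0,\varepsilon])}{\nu([u,\varepsilon])}
=\frac{1-e^{-n\varepsilon}}{e^{-nu}-e^{-n\varepsilon}}
\ge \frac{c_n\varepsilon}{e^{-nu}}
\qquad(u<0).
$$
Consequently,
$$
\|\mathcal M_{\mathrm{dil}}f_\varepsilon\|_{L^1(G_n)}\ge\int_{[0,1]^n}\int_{-\infty}^{0}\min\Bigl(1,\frac{c_n\varepsilon}{e^{-nu}}\Bigr)e^{-nu}\,du\,dx .
$$
Because the lower bound is of order $\varepsilon e^{nu}\cdot e^{-nu}=\varepsilon$ per unit of $u$, this integral is infinite, and certainly not bounded by $C\varepsilon$.

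It is worth double-checking that the divergence is genuine and not an artifact of the choice of test function. In the $s$-variable, $u<0$ corresponds to $s>1/n$, and the bound reads $\gtrsim\varepsilon/s$ against $ds$ on $(1/n,\infty)$. This is exactly the one-sided tail of the Hardy--Littlewood maximal function of an indicator, which is not integrable at infinity. So in fact $\mathcal M_{\mathrm{dil}}f\notin L^1(G_n)$ for every nonzero $f\ge0$, and even a single fixed $f$ (say $\varepsilon=1$) suffices.

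The main obstacle is only the bookkeeping. One must confirm the direction of the one-sided averages: upward dilation in $y$ corresponds to decreasing $s$, so points with small $y$ do see the mass above them. One must also confirm that the denominator $\nu([u,\varepsilon])$ is comparable to $e^{-nu}$ for $u<0$, where $e^{-nu}>1$. Once these are checked, the estimate $\int_{-\infty}^0 \varepsilon\,du=\infty$ finishes the proof, with Fubini in $x$ handling the horizontal factor.
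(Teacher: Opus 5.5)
Your argument is correct and is essentially the paper's proof. The paper takes $y^n\mathbf 1_{[0,1]^n}(x)\mathbf 1_{[1,e]}(y)$ rather than an indicator, so the computation is exact. It then chooses $r$ so that the dilation interval from a point with $y<1$ covers the slab, obtains $\mathcal M_{\mathrm{dil}}f\gtrsim y^n$, and concludes from $\int_0^1 y^n\,y^{-n-1}\,dy=\infty$. As you note yourself, the family $f_\varepsilon$ is unnecessary, since a single fixed function already gives $\|\mathcal M_{\mathrm{dil}}f\|_{L^1(G_n)}=\infty$.
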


\begin{proof}
Let
$$
        f(x,y)=y^n\mathbf 1_{[0,1]^n}(x)\mathbf 1_{[1,e]}(y).
$$
Equivalently, in logarithmic coordinates,
$$
        F(x,u)=e^{nu}\mathbf 1_{[0,1]^n}(x)\mathbf 1_{[0,1]}(u).
$$
Then
$$
\norm{f}_{L^1(G_n)}
=
\int_{[0,1]^n}\int_1^e y^n\frac{dy}{y^{n+1}}\,dx
=
\int_1^e\frac{dy}{y}=1.
$$

Now fix $x\in[0,1]^n$ and $0<y<1$, and write $u=\log y<0$. Choose $r=1-u$. Then the interval $[u,u+r]$ contains $[0,1]$, and hence
\begin{align*}
\int_0^r |f(x,ye^t)|e^{-nt}\,dt
&\ge
\int_{-u}^{1-u} e^{n(u+t)}e^{-nt}\,dt 
=
\int_{-u}^{1-u} e^{nu}\,dt
=
e^{nu}.
\end{align*}
Since $V_n(r)\le 1/n$, we obtain
$$
        \mathcal M_{\mathrm{dil}}f(x,y)
        \ge
        n e^{nu}=n y^n
$$
for all $x\in[0,1]^n$ and $0<y<1$. Therefore
\begin{align*}
\norm{\mathcal M_{\mathrm{dil}}f}_{L^1(G_n)}
&\ge
\int_{[0,1]^n}\int_0^1 n y^n\frac{dy}{y^{n+1}}\,dx 
=
 n\int_0^1\frac{dy}{y}
 =\infty.
\end{align*}
Thus $\mathcal M_{\mathrm{dil}}$ is not strong type $L^1$.
\end{proof}

\section{Fixed Geodesic Averages}\label{sec:geo}

Fix a unit vector $\omega\in S^{n-1}$.  The hyperbolic geodesic starting at $(0,1)$ with initial horizontal direction $\omega$ is
$$
        \gamma_\omega(t)=(\omega\tanh t,\operatorname{sech}t),
        \qquad t\ge0.
$$
The parameter \(t\) is hyperbolic arc length.  Indeed,
$$
        |\gamma_\omega'(t)|_{\mathrm{Eucl}}
        =\operatorname{sech}t,
$$
while the height of \(\gamma_\omega(t)\) is
\(y(t)=\operatorname{sech}t\).  Hence the speed of \(\gamma_\omega\) for the
metric \(ds^2=y^{-2}(|dx|^2+dy^2)\) is equal to \(1\).
Right multiplication gives
$$
        (x,y)\gamma_\omega(t)
        =
        (x+y\omega\tanh t,y\operatorname{sech}t).
$$

For $f\in L^1_{\mathrm{loc}}(G_n)$, define the fixed geodesic maximal operator
$$
        \mathcal M_{\gamma_\omega}f(x,y)
        =
        \sup_{r>0}\frac1r\int_0^r
        |f(x+y\omega\tanh t,y\operatorname{sech}t)|\,dt.
$$

We shall use the time-$t$ operator
$$
        S_tf(x,y)=f(x+y\omega\tanh t,y\operatorname{sech}t).
$$
The basic calculation is the following.

\begin{lemma}\label{lem:geodesic-norm}
For $1\le p<\infty$,
$$
        \|S_tf\|_{L^p(G_n)}=(\operatorname{sech}t)^{n/p}\|f\|_{L^p(G_n)}.
$$
For $p=\infty$, one has
$$
        \|S_tf\|_{L^\infty(G_n)}\le\|f\|_{L^\infty(G_n)}.
$$
\end{lemma}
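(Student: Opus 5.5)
The plan is to recognize $S_t$ as a right translation, $S_tf(g)=f(g\,\gamma_\omega(t))$ with $g=(x,y)$. Indeed, by the group law \eqref{eq:group-law},
$$
(x,y)\gamma_\omega(t)=(x+y\omega\tanh t,\;y\operatorname{sech}t).
$$
Then I would apply the right-translation identity \eqref{eq:right-translation-integral} with $h=\gamma_\omega(t)=(a,b)$, where $a=\omega\tanh t$ and $b=\operatorname{sech}t$.

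For $1\le p<\infty$, take $\Phi=|f|^p$, which is non-negative and measurable. Identity \eqref{eq:right-translation-integral} gives
$$
\int_{G_n}|f(gh)|^p\,d\mu(g)=b^n\int_{G_n}|f(g)|^p\,d\mu(g)=(\operatorname{sech}t)^n\|f\|_{L^p(G_n)}^p.
$$
Taking $p$-th roots yields the claimed equality.

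As a sanity check, independent of the earlier identity, I would verify this by a direct change of variables $x'=x+y\omega\tanh t$, $y'=y\operatorname{sech}t$. Its Jacobian is $\operatorname{sech}t$, coming from the $y$-direction only, since the $x$-shift is triangular. We also have $y^{-(n+1)}=(\operatorname{sech}t)^{n+1}y'^{-(n+1)}$. Together these give $dx\,dy/y^{n+1}=(\operatorname{sech}t)^{n}\,dx'\,dy'/y'^{n+1}$, which matches.

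For $p=\infty$, note that the map $g\mapsto gh$ is a diffeomorphism of $G_n$. By \eqref{eq:right-translation-measure} it maps $\mu$-null sets to $\mu$-null sets, since it multiplies measure by a positive constant. Hence $|f(gh)|\le\|f\|_\infty$ for $\mu$-a.e.\ $g$, and this is in fact an equality of norms.

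The only potential subtlety is this null-set preservation in the $L^\infty$ case, and the nonsingularity of the right translation handles it. The remaining steps are routine.
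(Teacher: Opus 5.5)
Your proposal is correct and is essentially the paper's argument. The paper proves the lemma by exactly the direct change of variables you include as a sanity check (inverse Jacobian $\cosh t$ and $y^{n+1}=(y')^{n+1}(\cosh t)^{n+1}$), and your primary route packages the same computation as the special case $h=\gamma_\omega(t)$, $b=\operatorname{sech}t$, of the right-translation identity \eqref{eq:right-translation-integral}. Your null-set argument for $p=\infty$ is more careful than the paper's ``immediate'', and it correctly gives equality of the $L^\infty$ norms.
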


\begin{proof}
For $1\le p<\infty$,
$$
\|S_tf\|_{L^p(G_n)}^p
=
\int_0^\infty\int_{\mathbb R^n}
|f(x+y\omega\tanh t,y\operatorname{sech}t)|^p
\frac{dx\,dy}{y^{n+1}}.
$$
Set
$$
        x'=x+y\omega\tanh t,
        \qquad
        y'=y\operatorname{sech}t.
$$
Then
$$
        y=y'\cosh t,
        \qquad
        x=x'-y'\omega\sinh t.
$$
The Jacobian of the inverse change of variables is $\cosh t$.  Hence
$$
        dx\,dy=\cosh t\,dx'\,dy',
        \qquad
        y^{n+1}=(y')^{n+1}(\cosh t)^{n+1}.
$$
Substitution gives
$$
\|S_tf\|_{L^p(G_n)}^p
=
(\operatorname{sech}t)^n
\|f\|_{L^p(G_n)}^p.
$$
This proves the formula.  The $L^\infty$ estimate is immediate.
\end{proof}

We split the maximal operator into a local part and a large-time part:
$$
        \mathcal M_{\gamma_\omega}^{\mathrm{loc}}f
        =
        \sup_{0<r\le1}\frac1r\int_0^r |S_tf|\,dt,
        \qquad
        \mathcal M_{\gamma_\omega}^{\mathrm{tail}}f
        =
        \sup_{r\ge1}\frac1r\int_0^r |S_tf|\,dt.
$$

\subsection{The local part}

We now first prove the local endpoint for the fixed-geodesic operator by reducing it
to the local finite-type parabolic maximal theorem of
Seeger--Tao--Wright~\cite{STW}.  The point is that, in suitable local
coordinates, the fixed-geodesic curve has the same finite-type structure as the
parabola.

\begin{theorem}
\label{thm:geo-local-Lloglog}
For every fixed \(\omega\in S^{n-1}\), every measurable \(f\), and every
\(\lambda>0\),
$$
        \mu\left\{(x,y):
        \mathcal M_{\gamma_\omega}^{\mathrm{loc}}f(x,y)>\lambda
        \right\}
        \le
        C_n
        \int_{G_n}
        \Phi\left(\frac{|f(x,y)|}{\lambda}\right)
        d\mu(x,y).
$$
\end{theorem}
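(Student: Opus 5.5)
The plan is to reduce Theorem~\ref{thm:geo-local-Lloglog} to the local finite-type parabolic maximal theorem of Seeger--Tao--Wright~\cite{STW}, with \(\Phi(s)=s\log\log(e^e+s)\). The reduction uses the group structure to normalize the height, a dyadic decomposition in \(y\), and a rotation that places \(\omega\) along \(e_1\).

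\emph{Step 1: straighten the slice.} Rotate so that \(\omega=e_1\). Write \(x=(x_1,x')\) with \(x'\in\mathbb R^{n-1}\) a passive parameter, and set \(u=\log y\). In coordinates \((x_1,u)\) one has
\[
S_tf(x,y)=F\bigl(x_1+e^u\tanh t,\;x',\;u-\log\cosh t\bigr).
\]
Because of the factor \(e^u\), this is not a translation-invariant curve in \((x_1,u)\). The fix is to localize to a dyadic layer \(e^{k}\le y<e^{k+1}\). For \(t\le1\) the curve moves \(u\) by at most \(\log\cosh 1<1\), so the relevant values of \(f\) lie in layers \(k-1,k\). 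On that layer, rescale \(x_1\mapsto e^{-k}x_1\). The left Haar measure on the layer becomes \(e^{-nk}\) times Lebesgue measure \(dx\,du\), up to constants. Since both sides of the inequality carry the same weight \(e^{-nk}\), it suffices to prove a uniform-in-\(k\) estimate for the two-dimensional maximal operator
\[
\sup_{0<r\le1}\frac1r\int_0^r\bigl|g\bigl(x_1+e^{v}\tanh t,\;u-\log\cosh t\bigr)\bigr|\,dt,
\]
with \(v=u-k\in[0,1]\). Summing over \(k\) then gives the estimate on \(G_n\) by Fubini.

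\emph{Step 2: verify the finite-type hypothesis.} The curve \(t\mapsto(e^v\tanh t,-\log\cosh t)\) behaves like \((e^v t,-t^2/2)\) near \(t=0\). Its curvature is nonvanishing on \([0,1]\), since \(\tanh'=\operatorname{sech}^2\neq0\) and the torsion-type determinant \(\tanh'\cdot(\log\cosh)''-\tanh''(\log\cosh)'\) equals \(\operatorname{sech}^4 t+2\operatorname{sech}^2 t\tanh^2 t>0\). It therefore has finite type \(2\) uniformly in \(v\in[0,1]\) and in the base point. The dependence on the base point \((x_1,u)\) enters only through the smooth parameter \(e^v\). This is a variable-coefficient family of curves of the kind covered by the local \(L\log\log L\) theorem of \cite{STW}, which treats averages along smooth curves \(t\mapsto\gamma(x,t)\) satisfying a rotational curvature / finite-type condition, and applies here uniformly on compact parameter sets.

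\emph{Step 3: apply the theorem and sum.} Apply \cite{STW} on each layer, with the extra variables \(x'\) frozen, and integrate in \(x'\) by Fubini. Sum the layers, using that each point of the support of \(f\) is counted in at most two layers.

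The main obstacle is Step 2. One must check that the hypotheses of the cited theorem hold with constants uniform in \(k\) and \(v\). A further difficulty is that \cite{STW} is stated for the homogeneous model \((t,t^2)\) or for curves with a nondegeneracy condition. If only the translation-invariant parabola version is available, I would try to absorb the parameter \(e^v\) by a further change of variables \((x_1,u)\mapsto(e^{-u}x_1,u)\). This makes the curve \((\tanh t,\,-\log\cosh t)\) exactly translation-invariant in the new coordinates, up to a bounded Jacobian on the layer. One then needs the known stability of the endpoint under smooth perturbations of the parabola.
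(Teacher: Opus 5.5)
Your main route is essentially the paper's: slicing off the variables transverse to $\omega$, localizing dyadically in height, and rescaling each layer by a group dilation to a fixed strip where the operator no longer depends on $k$. It then checks nondegenerate curvature, applies the local variable-coefficient Seeger--Tao--Wright estimate (globalized in the horizontal variable by translation invariance and a bounded-overlap cover, which the paper does explicitly and you should state), and sums over layers with bounded overlap. Your localization of the \emph{output} to one layer, so that only inputs from layers $k-1,k$ matter, is slightly cleaner than the paper's smooth partition, which needs the count $N_0$ and the doubling of $\Phi$.

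Discard the fallback in your last paragraph. In the coordinates $\xi=e^{-u}x_1$ the time-$t$ map becomes $(\xi,u)\mapsto(\xi\cosh t+\sinh t,\,u-\log\cosh t)$, which is not a translation. No change of coordinates can make it one, because right translations by $\gamma_\omega(s)$ and $\gamma_\omega(t)$ do not commute. So the variable-coefficient form of the theorem, which the paper invokes, is genuinely needed.
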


To prove this theorem, we first introduce the slice notation.
Because \(\omega\in S^{n-1}\) is a unit vector, every
\(x\in\mathbb R^n\) has the unique orthogonal decomposition
$$
        x=(x\cdot\omega)\omega+
        \bigl(x-(x\cdot\omega)\omega\bigr).
$$
Thus we write
$$
        u=x\cdot\omega,
        \qquad
        z=x-u\omega\in\omega^\perp,
$$
and hence
$$
        x=u\omega+z.
$$

The map
$$
        (u,z)\in\mathbb R\times\omega^\perp
        \longmapsto
        u\omega+z\in\mathbb R^n
$$
is an orthogonal linear change of variables. Therefore its Jacobian is equal
to \(1\), and Lebesgue measure decomposes as
$$
        dx=du\,dz.
$$
If we define
$$
        d\nu_n(u,y)=\frac{du\,dy}{y^{n+1}},
$$
then
$$
        d\mu(x,y)=dz\,d\nu_n(u,y).
$$

For the fixed-geodesic slice, define
$$
        \mathfrak M_{\mathrm{geo}}F(u,y)
        =
        \sup_{0<r\le1}
        \frac1r\int_0^r
        |F(u+y\tanh t,y\operatorname{sech}t)|\,dt .
$$
The local fixed-geodesic operator leaves the transverse variable \(z\) fixed.
Indeed, if \(x=u\omega+z\), then
$$
        x+y\omega\tanh t
        =
        (u+y\tanh t)\omega+z.
$$
Therefore
$$
        \mathcal M_{\gamma_\omega}^{\mathrm{loc}}f(u\omega+z,y)
        =
        \mathfrak M_{\mathrm{geo}}(f_z)(u,y).
$$

We shall use the following local endpoint estimate.  It is the form of the
Seeger--Tao--Wright near-\(L^1\) theorem needed in the present argument.  The key tool we use is Theorem~1.1 of Seeger--Tao--Wright~\cite{STW}, together
with the local Orlicz formulation discussed in their Section~1.3.1 and the
parabolic example in their Section~1.3.5.  Since our curve is a smooth
variable-coefficient perturbation of the parabolic model on each compact
\(Y\)-strip, we include the short verification of the finite-type geometry and
of the uniformity of the constants.
The local finite-type theorem is applied to a compact family of curves, uniformly controlled on each $Y$-strip by the uniform lower bound on the determinant and by finitely many $C^N$-norms of the normalized curve family.

\begin{theorem}
\label{thm:STW-local-finite-type}
Let \(0<a<b<\infty\).  Define
$$
        \mathcal A H(U,Y)
        =
        \sup_{0<r\le1}
        \frac1r\int_0^r
        |H(U+Y\tanh t,Y\operatorname{sech}t)|\,dt .
$$
Then, for every measurable \(H\) and every \(\lambda>0\),
$$
        \left|
        \left\{(U,Y):a\le Y\le b,
        \mathcal A H(U,Y)>\lambda
        \right\}
        \right|                                        \le
        C_{a,b}
        \int_{\mathbb R^2}
        \Phi\left(\frac{|H(U,Y)|}{\lambda}\right)\,dU\,dY,
$$
where throughout this paper
$
        \Phi(s)=s\log\log(e^e+s),\ s\ge0.
$
The constant depends only on \(a,b\) and on finitely many \(C^N\)-bounds for the
normalized curve family on the strip \(a\le Y\le b\).
\end{theorem}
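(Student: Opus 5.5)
The plan is to obtain the estimate from the local Orlicz form of Seeger--Tao--Wright's Theorem~1.1 (their Section~1.3.1), by checking that the family of curves $t\mapsto (U+Y\tanh t,\,Y\operatorname{sech}t)$, $0\le t\le1$, parametrized by the base point $(U,Y)$ with $a\le Y\le b$, satisfies their hypotheses uniformly. Two hypotheses need checking: a finite-type (curvature) condition at every point, and a translation-invariance or compactness condition that makes the constants uniform.

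First I will use translation invariance in $U$. The averaging curve depends on $Y$ but not on $U$, so the operator commutes with horizontal translations. Next I will localize in $U$. Cover $\mathbb R$ by unit intervals $I_k$. Since $|Y\tanh t|\le b$ for $0\le t\le1$, the values of $\mathcal A H$ on $I_k\times[a,b]$ depend only on $H$ restricted to the enlarged box $\widetilde I_k\times[a\operatorname{sech}1,b]$, where $\widetilde I_k$ is $I_k$ enlarged by $b$. These boxes have bounded overlap, with multiplicity depending on $b$. Because $\Phi$ is convex with $\Phi(0)=0$, the estimate on each box adds up to the global one. It therefore suffices to prove the bound for $H$ supported in one fixed compact box $Q$ and for base points in a fixed compact set $K$.

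The second step is the curvature check. Write $\Gamma_Y(t)=(Y\tanh t,\;Y\operatorname{sech}t-Y)$ for the displacement. Then
\begin{align*}
\Gamma_Y'(0)&=(Y,0),\\
\Gamma_Y''(0)&=(0,-Y),
\end{align*}
so
$$
\det(\Gamma_Y',\Gamma_Y'')(t)=Y^2\bigl(\tanh'(t)\operatorname{sech}''(t)-\tanh''(t)\operatorname{sech}'(t)\bigr).
$$
A direct computation simplifies this to $-Y^2\operatorname{sech}^3t$, which is bounded away from zero uniformly for $t\in[0,1]$ and $Y\in[a,b]$. So each curve is a nondegenerate type-$2$ curve, and after an affine normalization it is a smooth perturbation of the parabola $(t,-t^2/2)$. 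The family $\{\Gamma_Y\}$ and finitely many of its derivatives are bounded in $C^N$, uniformly in $Y\in[a,b]$.

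In the variable-coefficient setting of STW the curve $\gamma_{(U,Y)}(t)=(U,Y)+\Gamma_Y(t)$ depends smoothly on the base point. The relevant nondegeneracy is then of rotational-curvature type: the map $(U,Y,t)\mapsto \gamma_{(U,Y)}(t)$ must be a submersion with the appropriate finite-type condition for the incidence relation. Since $\partial_{(U,Y)}\gamma$ at $t=0$ is the identity, the submersion property is immediate. The finite-type condition reduces at $t$ near $0$ to the determinant computed above, and by compactness and continuity it holds on all of $K\times[0,1]$.

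The main obstacle is matching our operator to the exact hypotheses of STW's Theorem~1.1. That theorem is stated for averages over dyadic pieces, as in the maximal function $\sup_k |A_k f|$ with $A_k$ averaging over $t\sim 2^{-k}$, and for curves $x+\gamma(t)$ or a variable family with a curvature assumption. I would handle this in three moves. First, dominate $\frac1r\int_0^r$ by $\sup_k 2^{k}\int_{2^{-k-1}}^{2^{-k}}$, losing a constant factor. Second, apply the Orlicz endpoint in the form of their Section~1.3.1, whose parabolic model in Section~1.3.5 gives precisely $\Phi(s)=s\log\log(e^e+s)$. Third, check that their constants depend only on the $C^N$ norms and on the lower bound for the determinant, which were shown above to be uniform in $Y\in[a,b]$. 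If the variable-coefficient version is not directly available, I would instead freeze the coefficient $Y$ on thin strips $|Y-Y_0|\le\delta$. On each such strip the curve differs from the translation-invariant model $\Gamma_{Y_0}$ only by a smooth perturbation, and finitely many strips cover $[a,b]$.
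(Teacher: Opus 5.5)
Your proposal follows essentially the same route as the paper. You verify the uniform nondegeneracy $\det(\Gamma_Y',\Gamma_Y'')=-Y^2\operatorname{sech}^3t$ on the strip; the paper does the same computation after reparametrizing by $s=\tanh t$ and gets $-Y^2(1-s^2)^{-3/2}$. You then localize in $U$ to boxes of bounded overlap using $U$-translation invariance and $\Phi(0)=0$, and apply the local Orlicz form of the Seeger--Tao--Wright estimate on each box. Your reduction of $\frac1r\int_0^r$ to the dyadic shells $2^k\int_{2^{-k-1}}^{2^{-k}}$ is a step the paper leaves implicit.

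One caveat concerns your fallback of freezing $Y$ on thin strips. This does not by itself give a translation-invariant operator, because the curve still varies with $Y$ inside each strip. So, exactly as in the paper, the argument genuinely rests on the variable-coefficient (uniform compact-family) form of the STW theorem.
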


\begin{proof}
We first verify the finite-type geometry.  Make the smooth change of parameter
$$
        s=\tanh t,
        \qquad 0\le s\le \tanh 1.
$$
If \(\rho=\tanh r\), then \(r\simeq \rho\) for \(0<r\le1\), and
$$
        dt=\frac{ds}{1-s^2}.
$$
The weight \((1-s^2)^{-1}\) is smooth and bounded above and below on
\([0,\tanh1]\).  Hence \(\mathcal A H\) is dominated, up to an absolute
constant, by the corresponding maximal operator with parameter \(s\):
$$
        \widetilde{\mathcal A}H(U,Y)
        =
        \sup_{0<\rho\le\tanh1}
        \frac1\rho\int_0^\rho
        \left|H\left(U+Ys,Y\sqrt{1-s^2}\right)\right|\,ds .
$$
The associated curve is
$$
        \widetilde\Gamma((U,Y),s)
        =
        \left(U+Ys,\,Y\sqrt{1-s^2}\right),
        \qquad 0\le s\le \tanh1.
$$
Its displacement from the base point is
$$
        \widetilde\Theta((U,Y),s)
        =
        \left(Ys,\,Y(\sqrt{1-s^2}-1)\right).
$$
Therefore
$$
        \partial_s\widetilde\Theta((U,Y),s)
        =
        \left(Y,\,-\frac{Ys}{\sqrt{1-s^2}}\right)
$$
and
$$
        \partial_s^2\widetilde\Theta((U,Y),s)
        =
        \left(0,\,-\frac{Y}{(1-s^2)^{3/2}}\right).
$$
Thus
$$
        \det\left(
        \partial_s\widetilde\Theta((U,Y),s),
        \partial_s^2\widetilde\Theta((U,Y),s)
        \right)
        =
        -\frac{Y^2}{(1-s^2)^{3/2}}.
$$
On every strip \(a\le Y\le b\), \(0\le s\le\tanh1\), this determinant is
bounded away from zero and all \(C^N\)-norms of the curve family are bounded by
constants depending only on \(a,b,N\).  Hence the family is a compact smooth
finite-type perturbation of the parabolic model \(s\mapsto (s,s^2)\), uniformly
on the strip.

We now apply the local finite-type form of the Seeger--Tao--Wright \cite{STW}
near-\(L^1\) estimate.  The estimate is local in the base variables.  We first
state the local patch estimate that follows from the preceding finite-type
calculation.

Fix an interval \(I\subset\mathbb R\) centered at the origin and with length
\(
        L=8b,
\)
and define its enlargement
$$
        I^*
        =
        \{U\in\mathbb R:\operatorname{dist}(U,I)<b\}.
$$
Then the local Seeger--Tao--Wright \cite{STW} estimate gives
\begin{equation}
\label{eq:STW-local-U-patch}
\begin{aligned}
        &
        \left|
        \left\{
        (U,Y)\in I\times[a,b]:
        \widetilde{\mathcal A}H(U,Y)>\lambda
        \right\}
        \right|                                           
                \le
        C_{a,b}
        \int_{I^*\times[a\operatorname{sech}1,b]}
        \Phi\left(\frac{|H(U,Y)|}{\lambda}\right)\,dU\,dY .
\end{aligned}
\end{equation}
Here
$$
        \widetilde{\mathcal A}H(U,Y)
        =
        \sup_{0<\rho\le\tanh1}
        \frac1\rho
        \int_0^\rho
        \left|
        H\left(U+Ys,Y\sqrt{1-s^2}\right)
        \right|\,ds .
$$
Indeed, if \((U,Y)\in I\times[a,b]\) and \(0\le s\le\tanh1\), then
$$
        U+Ys\in I^*
$$
because \(0\le Ys< b\).  Also
$$
        a\operatorname{sech}1
        \le
        Y\sqrt{1-s^2}
        \le
        b .
$$
Therefore, on \(I\times[a,b]\),
$$
        \widetilde{\mathcal A}H
        =
        \widetilde{\mathcal A}
        \left(
        H\mathbf 1_{I^*\times[a\operatorname{sech}1,b]}
        \right).
$$
The finite-type determinant estimate proved above is uniform on
\(I\times[a,b]\), and the constants are independent of the position of \(I\)
because the curve family is translation-invariant in the \(U\)-variable.
This proves \eqref{eq:STW-local-U-patch}.

Now we patch the \(U\)-axis.  Let
$$
        I_m=[mL,(m+1)L),
        \qquad m\in\mathbb Z,
$$
and let
$$
        I_m^*
        =
        \{U\in\mathbb R:\operatorname{dist}(U,I_m)<b\}.
$$
The intervals \(I_m\) are disjoint and cover \(\mathbb R\).  Since
\(L=8b\), each enlarged interval is
$$
        I_m^*=(mL-b,(m+1)L+b)
$$
and has length \(L+2b=10b<2L\).  Therefore only two consecutive enlarged
intervals can contain the same point, and hence
$$
        \sum_{m\in\mathbb Z}\mathbf 1_{I_m^*}(U)
        \le
        2
        \qquad\text{for every }U\in\mathbb R .
$$

Set
$$
        E
        =
        \left\{
        (U,Y):a\le Y\le b,\ 
        \widetilde{\mathcal A}H(U,Y)>\lambda
        \right\}.
$$
Then
$$
        E
        =
        \bigcup_{m\in\mathbb Z}
        \left(E\cap(I_m\times[a,b])\right),
$$
and the union is disjoint up to endpoints.  Hence, using
\eqref{eq:STW-local-U-patch},
$$
\begin{aligned}
        |E|
        &\le
        \sum_{m\in\mathbb Z}
        \left|
        E\cap(I_m\times[a,b])
        \right|                                           \\
        &\le
        C_{a,b}
        \sum_{m\in\mathbb Z}
        \int_{I_m^*\times[a\operatorname{sech}1,b]}
        \Phi\left(\frac{|H(U,Y)|}{\lambda}\right)\,dU\,dY  \\
        &=
        C_{a,b}
        \int_{\mathbb R^2}
        \left(\sum_{m\in\mathbb Z}\mathbf 1_{I_m^*}(U)\right)
        \mathbf 1_{[a\operatorname{sech}1,b]}(Y)
        \Phi\left(\frac{|H(U,Y)|}{\lambda}\right)\,dU\,dY  \\
        &\le
        C_{a,b}
        \int_{\mathbb R^2}
        \Phi\left(\frac{|H(U,Y)|}{\lambda}\right)\,dU\,dY .
\end{aligned}
$$
This proves the desired global estimate for
\(\widetilde{\mathcal A}\).  Since the original operator
\(\mathcal A\) is dominated by \(\widetilde{\mathcal A}\), up to an absolute
constant coming from the smooth change of parameter \(s=\tanh t\), the same
estimate holds for \(\mathcal A\), after changing the value of \(C_{a,b}\).
This proves the theorem.
\end{proof}

\begin{theorem}
\label{thm:STW-local-finite-type-Lp}
Let \(0<a<b<\infty\), and define
$$
        \mathcal A H(U,Y)
        =
        \sup_{0<r\le1}
        \frac1r\int_0^r
        |H(U+Y\tanh t,Y\operatorname{sech}t)|\,dt .
$$
Then, for every \(1<p\le\infty\),
$$
        \|\mathcal A H\|_{L^p(\mathbb R\times[a,b])}
        \le
        C_{p,a,b}\|H\|_{L^p(\mathbb R^2)} .
$$
The constant depends only on \(p,a,b\) and on the same finite-type constants
which occur in Theorem~\ref{thm:STW-local-finite-type}.
\end{theorem}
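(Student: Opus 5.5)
The plan is to obtain the $L^p$ bound by interpolating between the trivial $L^\infty$ estimate and the weak Orlicz endpoint of Theorem~\ref{thm:STW-local-finite-type}. The case $p=\infty$ is immediate, since each average $\frac1r\int_0^r|H(\cdots)|\,dt$ is at most $\|H\|_{L^\infty}$.

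For $1<p<\infty$ we use the layer-cake formula on $\mathbb R\times[a,b]$ together with the standard truncation argument of Marcinkiewicz type. Fix $\lambda>0$ and split
$$
H=H\mathbf 1_{\{|H|>\lambda/2\}}+H\mathbf 1_{\{|H|\le\lambda/2\}}=:H_1+H_2 .
$$
Since $\mathcal A$ is sublinear, $\mathcal AH\le\mathcal AH_1+\mathcal AH_2$. Moreover $\mathcal AH_2\le\lambda/2$, so
$$
\{\mathcal AH>\lambda\}\subset\{\mathcal AH_1>\lambda/2\}.
$$
Applying Theorem~\ref{thm:STW-local-finite-type} to $H_1$ at level $\lambda/2$ gives
$$
|\{(U,Y):a\le Y\le b,\ \mathcal AH>\lambda\}|
\le C_{a,b}\int_{\{|H|>\lambda/2\}}\Phi\!\left(\frac{2|H|}{\lambda}\right)dU\,dY .
$$

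Next multiply by $p\lambda^{p-1}$, integrate in $\lambda$, and exchange the order by Tonelli. For a fixed point with $|H|=h$, the $\lambda$-integral runs over $0<\lambda<2h$. Substituting $\lambda=2h/s$ with $s>1$, it becomes
$$
(2h)^p\, p\int_1^\infty s^{-p-1}\,\Phi(s)\,ds .
$$
Since $\Phi(s)=s\log\log(e^e+s)$, the integrand is $s^{-p}\log\log(e^e+s)$, which is integrable on $(1,\infty)$ precisely when $p>1$. Therefore
$$
\|\mathcal AH\|_{L^p(\mathbb R\times[a,b])}^p\le C_p\,C_{a,b}\,\|H\|_{L^p(\mathbb R^2)}^p .
$$
The constant depends only on $p$, $a$, $b$ and the finite-type constants, as claimed.

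The only points needing care are measurability of $\mathcal AH$, which holds because the supremum may be taken over rational $r$ by continuity of $r\mapsto\frac1r\int_0^r$, and the convergence of $\int_1^\infty s^{-p}\log\log(e^e+s)\,ds$. The latter is the main (mild) obstacle, and it is handled by the bound $\log\log(e^e+s)\le C_\varepsilon s^{\varepsilon}$ with $\varepsilon<p-1$. No further appeal to Seeger--Tao--Wright is needed beyond the endpoint already established.
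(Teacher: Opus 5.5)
Your proposal is correct and follows the paper's proof essentially step for step. Both arguments truncate at $\lambda/2$, apply the weak $L\log\log L$ endpoint of Theorem~\ref{thm:STW-local-finite-type} to $H\mathbf 1_{\{|H|>\lambda/2\}}$, and then use the layer-cake formula with Tonelli; the only difference is your substitution $\lambda=2h/s$, which turns the paper's $\int_0^2 t^{p-1}\Phi(2/t)\,dt$ into the equivalent $\int_1^\infty s^{-p-1}\Phi(s)\,ds$, finite exactly when $p>1$.
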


\begin{proof}
The case \(p=\infty\) is immediate from the definition, since
$$
        \mathcal A H(U,Y)\le \|H\|_{L^\infty(\mathbb R^2)} .
$$
Let \(1<p<\infty\).  We derive the estimate from the endpoint estimate in
Theorem~\ref{thm:STW-local-finite-type} and the trivial \(L^\infty\) bound.
This keeps the estimate exactly in the form used above.

For \(\lambda>0\), write
$$
        H_{\lambda}=H\mathbf 1_{\{|H|>\lambda/2\}},
        \qquad
        H^{\lambda}=H\mathbf 1_{\{|H|\le\lambda/2\}}.
$$
Then \(|H^{\lambda}|\le\lambda/2\), and hence
$$
        \mathcal A H^{\lambda}(U,Y)\le \lambda/2 .
$$
By sublinearity,
$$
        \{(U,Y):a\le Y\le b,\ \mathcal A H(U,Y)>\lambda\}
        \subset
        \{(U,Y):a\le Y\le b,\ \mathcal A H_{\lambda}(U,Y)>\lambda/2\}.
$$
Applying Theorem~\ref{thm:STW-local-finite-type} to \(H_{\lambda}\), we get
$$
        \left|
        \{(U,Y):a\le Y\le b,
        \mathcal A H(U,Y)>\lambda\}
        \right|
        \le
        C_{a,b}
        \int_{\{|H|>\lambda/2\}}
        \Phi\left(\frac{2|H(U,Y)|}{\lambda}\right)dU\,dY .
$$
Using the distribution formula for the \(L^p\) norm and then Tonelli's theorem,
$$
\begin{aligned}
        \|\mathcal A H\|_{L^p(\mathbb R\times[a,b])}^p
        &=
        p\int_0^\infty
        \lambda^{p-1}
        \left|
        \{(U,Y):a\le Y\le b,
        \mathcal A H(U,Y)>\lambda\}
        \right|d\lambda                                      \\
        &\le
        C_{p,a,b}
        \int_{\mathbb R^2}
        \int_0^{2|H(U,Y)|}
        \lambda^{p-1}
        \Phi\left(\frac{2|H(U,Y)|}{\lambda}\right)d\lambda
        \,dU\,dY .
\end{aligned}
$$
If \(A=|H(U,Y)|\), then for \(A>0\) the inner integral is, after the change of
variables \(\lambda=A t\),
$$
        A^p
        \int_0^2
        t^{p-1}\Phi\left(\frac2t\right)dt .
$$
The last integral is finite because, as \(t\downarrow0\),
$$
        t^{p-1}\Phi\left(\frac2t\right)
        \lesssim
        t^{p-2}\log\log\left(e^e+\frac2t\right),
$$
and \(p>1\).  Therefore
$$
        \|\mathcal A H\|_{L^p(\mathbb R\times[a,b])}^p
        \le
        C_{p,a,b}\int_{\mathbb R^2}|H(U,Y)|^p\,dU\,dY .
$$
Taking \(p\)-th roots proves the theorem.
\end{proof}

Now we introduce the elementary estimates for \(\Phi(s)=s\log\log(e^e+s)\).

\begin{lemma}
\label{lem:Phi-basic}
Let
$
        \Phi(s)=s\log\log(e^e+s),
        \ s\ge0.
$
Then
\begin{equation}\label{eq:Phi-basic}
        s\le \Phi(s),
        \qquad
        \Phi(2s)\le C\Phi(s),
        \qquad s\ge0,
\end{equation}
where \(C>0\) is an absolute constant.
\end{lemma}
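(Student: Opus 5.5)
The plan is to prove both inequalities in \eqref{eq:Phi-basic} by elementary monotonicity arguments about the function $\psi(s)=\log\log(e^e+s)$.

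For the first inequality, note that for $s\ge0$ we have $e^e+s\ge e^e$. Hence $\log(e^e+s)\ge e$, and so $\psi(s)=\log\log(e^e+s)\ge \log e=1$. Multiplying by $s\ge0$ gives $s\le s\,\psi(s)=\Phi(s)$. This is the whole argument for the first claim.

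For the doubling inequality, write $\Phi(2s)=2s\,\psi(2s)$. It therefore suffices to show $\psi(2s)\le C'\psi(s)$ with an absolute $C'$, since then $C=2C'$ works. The chain of estimates is:
\begin{itemize}
\item[(a)] $e^e+2s\le 2(e^e+s)$, so $\log(e^e+2s)\le \log 2+\log(e^e+s)$.
\item[(b)] Since $\log(e^e+s)\ge e>\log 2$, this gives $\log(e^e+2s)\le 2\log(e^e+s)$.
\item[(c)] Taking logarithms once more, $\psi(2s)\le \log2+\psi(s)$.
\item[(d)] Since $\psi(s)\ge1\ge\log 2$, we get $\psi(2s)\le 2\psi(s)$.
\end{itemize}
Combining, $\Phi(2s)\le 4\Phi(s)$, so one may take $C=4$. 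The case $s=0$ is trivial, since both sides vanish.

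There is no real obstacle here. The only point needing care is that the constant $e^e$ inside the iterated logarithm is what guarantees $\psi\ge1$, and hence the absorption of the additive $\log 2$ terms in steps (b) and (d). Without this normalization the estimate would fail near $s=0$, where $\log\log$ could be negative or undefined.
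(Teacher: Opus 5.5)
Your proposal is correct and follows essentially the same route as the paper. Both arguments get $\log\log(e^e+s)\ge 1$ for the first inequality, and for doubling both use $e^e+2s\le 2(e^e+s)$ and then absorb the additive $\log 2$ terms via $\log(e^e+s)\ge e$ and $\log\log(e^e+s)\ge 1$. The only difference is that you absorb with an explicit factor $2$ at each stage, giving $C=4$, whereas the paper uses the factor $1+\log 2/e$ before taking the second logarithm.
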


\begin{proof}
Since
\(
        e^e+s\ge e^e,
\)
we have
\(
        \log(e^e+s)\ge e,
\)
and therefore
\(
        \log\log(e^e+s)\ge 1.
\)
Hence
$$
        \Phi(s)=s\log\log(e^e+s)\ge s.
$$

We now prove the doubling estimate.  Put
\(
        A(s)=\log(e^e+s).
\)
Then \(A(s)\ge e\).  Also,
$$
        e^e+2s
        \le
        2(e^e+s),
$$
so
$$
        \log(e^e+2s)
        \le
        \log 2+\log(e^e+s)
        =
        \log 2+A(s).
$$
Since \(A(s)\ge e\), we have
$$
        \log 2+A(s)
        \le
        \left(1+\frac{\log 2}{e}\right)A(s).
$$
Taking logarithms gives
$$
        \log\log(e^e+2s)
        \le
        \log A(s)+
        \log\left(1+\frac{\log 2}{e}\right).
$$
Because \(\log A(s)\ge1\), this implies
\(
        \log\log(e^e+2s)
        \le
        C\log\log(e^e+s).
\)
Therefore
$$
        \Phi(2s)
        =
        2s\log\log(e^e+2s)
        \le
        C s\log\log(e^e+s)
        =
        C\Phi(s).
$$
This proves \eqref{eq:Phi-basic}.
\end{proof}

\begin{proposition}
\label{prop:geo-slice-Lloglog}
For every measurable \(F\) and every \(\lambda>0\),
\begin{equation}\label{eq:geo-slice-Lloglog}
        \nu_n\{(u,y):\mathfrak M_{\mathrm{geo}}F(u,y)>\lambda\}
        \le
        C_n
        \int_{\mathbb R\times\mathbb R_+}
        \Phi\left(\frac{|F(u,y)|}{\lambda}\right)
        d\nu_n(u,y).
\end{equation}
\end{proposition}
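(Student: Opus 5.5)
The plan is to exploit the dilation symmetry of the slice problem. Combining it with Theorem~\ref{thm:STW-local-finite-type} on a single dyadic strip \(1\le y\le 2\) and summing over strips then gives \eqref{eq:geo-slice-Lloglog} with constant independent of scale.

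The key observation is that the operator \(\mathfrak M_{\mathrm{geo}}\) commutes with the isotropic dilations \(\delta_\kappa(u,y)=(\kappa u,\kappa y)\), \(\kappa>0\). If \(F_\kappa(u,y)=F(\kappa u,\kappa y)\), then
$$
        F_\kappa(u+y\tanh t,y\operatorname{sech}t)
        =F(\kappa u+\kappa y\tanh t,\kappa y\operatorname{sech}t),
$$
so \(\mathfrak M_{\mathrm{geo}}(F_\kappa)=(\mathfrak M_{\mathrm{geo}}F)\circ\delta_\kappa\). This is just left translation by \((0,\kappa)\) in the group. Moreover \(d\nu_n\circ\delta_\kappa=\kappa^{1-n}d\nu_n\), which follows from \(du\,dy\mapsto \kappa^2 du\,dy\) and \(y^{-n-1}\mapsto\kappa^{-n-1}y^{-n-1}\). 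On the strip \(2^k\le y\le 2^{k+1}\), the measure \(d\nu_n\) is comparable, with constants depending only on \(n\), to \(2^{-k(n+1)}du\,dy\).

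The first step is to decompose \(\{\mathfrak M_{\mathrm{geo}}F>\lambda\}\) into the pieces \(E_k\) with \(2^k\le y<2^{k+1}\), \(k\in\mathbb Z\). For \(E_k\), set \(H(U,Y)=F(2^kU,2^kY)\). Then \((u,y)\in E_k\) iff \((U,Y)=2^{-k}(u,y)\) satisfies \(1\le Y<2\) and \(\mathcal A H(U,Y)>\lambda\), where \(\mathcal A\) is the operator of Theorem~\ref{thm:STW-local-finite-type}. Hence
$$
        \nu_n(E_k)\simeq_n 2^{-k(n+1)}\cdot 2^{2k}\,\bigl|\{(U,Y):1\le Y\le 2,\ \mathcal A H>\lambda\}\bigr|.
$$
Applying Theorem~\ref{thm:STW-local-finite-type} with \(a=1\), \(b=2\), we bound this by \(C\,2^{k(1-n)}\int\Phi(|H|/\lambda)\,dU\,dY\). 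The constant here is a single fixed constant, which is the whole point of rescaling to one strip.

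The next step is to sharpen the localization in \(Y\). By the support observation in the proof of Theorem~\ref{thm:STW-local-finite-type}, only the values of \(H\) with \(\operatorname{sech}1\le Y\le 2\) matter on this strip. We will use \(\widetilde{\mathcal A}H=\widetilde{\mathcal A}(H\mathbf 1_{\mathbb R\times[\operatorname{sech}1,2]})\) and apply the theorem to the truncated function. Undoing the scaling gives
$$
        \nu_n(E_k)\le C\int_{\mathbb R\times[2^k\operatorname{sech}1,\,2^{k+1}]}\Phi\Bigl(\frac{|F|}{\lambda}\Bigr)\,2^{-k(n+1)}\,du\,dy
        \le C_n\int_{\mathbb R\times[2^k\operatorname{sech}1,\,2^{k+1}]}\Phi\Bigl(\frac{|F|}{\lambda}\Bigr)d\nu_n.
$$
The last inequality holds because \(y\simeq 2^k\) on that range, with \(\operatorname{sech}1\) a fixed ratio.

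The final step is to sum over \(k\). The intervals \([2^k\operatorname{sech}1,2^{k+1}]\) have bounded overlap, at most three of them since \(\operatorname{sech}1>1/4\). This yields \eqref{eq:geo-slice-Lloglog}.

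The main obstacle is bookkeeping rather than analysis. Two points need care. First, the Orlicz function must be evaluated at \(|F|/\lambda\) with the same \(\lambda\) after rescaling; this works because the dilation changes only the argument of \(F\), not its values, so \(\Phi\) passes through unchanged and no doubling of \(\Phi\) (Lemma~\ref{lem:Phi-basic}) is needed. Second, the constant from Theorem~\ref{thm:STW-local-finite-type} must be used for one fixed strip only. Any \(k\)-dependence is then confined to the explicit Jacobian factors, which match between the two sides exactly up to \(C_n\).
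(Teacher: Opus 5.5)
Your proof is correct, but it is organized differently from the paper's. The paper decomposes the \emph{input}. It writes \(F=\sum_k F\chi_k(y)\) with a smooth dyadic partition of unity and proves the pointwise bound \(\mathfrak M_{\mathrm{geo}}F\le\sum_{k\in I(y)}\mathfrak M_{\mathrm{geo}}F_k\) with \(|I(y)|\le N_0\). By pigeonhole, it then applies the rescaled strip estimate on \(a\le Y\le b\cosh1\) to each \(F_k\) at the reduced level \(\lambda/N_0\). To return to \(\Phi(|F|/\lambda)\), it needs the doubling bound \(\Phi(2s)\le C\Phi(s)\) of Lemma~\ref{lem:Phi-basic} together with the bounded overlap of the \(\chi_k\).

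You decompose the \emph{output} instead. You split the level set into the disjoint height bands \(2^k\le y<2^{k+1}\). At height \(y\) the operator only sees \(F\) at heights in \([y\operatorname{sech}1,y]\), so on each band you can replace \(F\) by its restriction to a comparable band and keep the full threshold \(\lambda\). This truncation identity already holds for \(\mathcal A\) itself on \(1\le Y\le 2\), so you do not need to pass through \(\widetilde{\mathcal A}\).

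Your route is shorter and slightly sharper. There is no sublinearity step, no loss from \(\lambda\) to \(\lambda/N_0\), and no use of the doubling of \(\Phi\). The same argument would therefore apply verbatim to any function \(\Phi\) for which the single-strip estimate holds. The final summation only uses that the bands \([2^k\operatorname{sech}1,2^{k+1}]\) overlap boundedly; in fact they overlap at most twice, since \(\cosh 1<2\).

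Both arguments rest in the same way on Theorem~\ref{thm:STW-local-finite-type}, applied on one fixed strip. In both, the scale invariance makes the constant independent of \(k\): this is left translation by \((0,2^k)\), under which \(\nu_n\) scales by \(2^{k(1-n)}\).
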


\begin{proof}
Choose a non-negative smooth dyadic partition of unity
$$
        1=\sum_{k\in\mathbb Z}\chi_k(y),
        \qquad
        \chi_k(y)=\chi(2^{-k}y),
$$
where \(\chi\in C_c^\infty((0,\infty))\).  Fix constants
\(0<a<b<\infty\) such that
\(
        \operatorname{supp}\chi\subset [a,b],
\) e.g. take $a=\frac12$ and $b=2$.
Thus
$$
        \operatorname{supp}\chi_k\subset [a2^k,b2^k].
$$
Write
$$
        F=\sum_{k\in\mathbb Z}F_k,
        \qquad
        F_k(u,y)=F(u,y)\chi_k(y).
$$

For each height \(y>0\), define the active index set
$$
        I(y)
        =
        \left\{
        k\in\mathbb Z:
        [y\operatorname{sech}1,y]\cap [a2^k,b2^k]\neq\varnothing
        \right\}.
$$
If \(k\in I(y)\), then there exists \(s\) such that
$
        s\in [y\operatorname{sech}1,y]\cap [a2^k,b2^k].
$
Hence
$
        a2^k\le s\le y
$
and
$
        y\operatorname{sech}1\le s\le b2^k.
$
Therefore
$$
        \frac{y\operatorname{sech}1}{b}
        \le
        2^k
        \le
        \frac{y}{a}.
$$
Taking base-two logarithms gives
$$
        \log_2 y+\log_2\left(\frac{\operatorname{sech}1}{b}\right)
        \le
        k
        \le
        \log_2 y+\log_2\left(\frac1a\right).
$$
The length of this interval is
$$
        \log_2\left(\frac{b}{a\operatorname{sech}1}\right)
        =
        \log_2\left(\frac{b\cosh 1}{a}\right),
$$
which is independent of \(y\).  Hence there is a constant
$$
        N_0
        =
        \left\lceil
        \log_2\left(\frac{b\cosh 1}{a}\right)
        \right\rceil+2
$$
such that
$
        |I(y)|\le N_0
        \ \text{for every }y>0.
$

Therefore, pointwise,
\begin{equation}\label{eq:geo-active-pointwise}
        \mathfrak M_{\mathrm{geo}}F(u,y)
        \le
        \sum_{k\in I(y)}
        \mathfrak M_{\mathrm{geo}}F_k(u,y).
\end{equation}

We next prove the uniform localized dyadic estimate
\begin{equation}\label{eq:geo-dyadic-localized-Lloglog}
        \nu_n\left(
        \left\{
        (u,y):
        \mathfrak M_{\mathrm{geo}}F_k(u,y)>\mu
        \right\}
        \right)
        \le
        C_n
        \int
        \Phi\left(\frac{|F_k(u,y)|}{\mu}\right)
        d\nu_n(u,y),
\end{equation}
with a constant independent of \(k\) and \(\mu>0\).

Fix \(k\).  On the support of \(F_k\), the height satisfies
\(
        a2^k\le y\le b2^k.
\)
If
$
        \mathfrak M_{\mathrm{geo}}F_k(u,y)\neq0,
$
then for some \(0\le t\le1\),
$
        y\operatorname{sech}t\in [a2^k,b2^k].
$
Consequently the height is also localized:
\(
        a2^k\le y\le b(\cosh 1)2^k.
\)

Make the normalized change of variables
$
        u=2^kU,
        \ 
        y=2^kY,
$
and set
$
        \widetilde F_k(U,Y)=F_k(2^kU,2^kY).
$
Then
$$
        d\nu_n(u,y)
        =
        \frac{du\,dy}{y^{n+1}}
        =
        2^{k(1-n)}
        \frac{dU\,dY}{Y^{n+1}}.
$$

For every \((U,Y)\), we have the exact identity
$$
        \mathfrak M_{\mathrm{geo}}F_k(2^kU,2^kY)
        =
        \sup_{0<r\le1}
        \frac1r\int_0^r
        |\widetilde F_k(U+Y\tanh t,Y\operatorname{sech}t)|\,dt .
$$

The support of \(\widetilde F_k\) in the second variable lies in $[a,b]$, 
and hence the support of $ \mathfrak M_{\mathrm{geo}}F_k$ in $Y$ lies in
$$
        a\le Y\le b\cosh 1.
$$
On this fixed region, \(Y^{-(n+1)}dU\,dY\) is comparable to Lebesgue measure
\(dU\,dY\), with constants depending only on \(n,a,b\).

Define
$$
        \widetilde E_k(\mu)
        =
        \left\{
        (U,Y):
        a\le Y\le b\cosh 1,\ 
        \sup_{0<r\le1}
        \frac1r\int_0^r
        |\widetilde F_k(U+Y\tanh t,Y\operatorname{sech}t)|\,dt
        >\mu
        \right\}.
$$

By Theorem~\ref{thm:STW-local-finite-type}, applied with upper strip bound
\(b\cosh 1\), we have
\begin{align*}\label{eq:normalized-dyadic-set}
        |\widetilde E_k(\mu)|
        &\le
        C_{a,b}
        \int_{\mathbb R\times\mathbb R_+}
        \Phi\left(\frac{|\widetilde F_k(U,Y)|}{\mu}\right)
        dU\,dY\\
        &\le
        C_{n,a,b}
        \int_{\mathbb R\times\mathbb R_+}
        \Phi\left(\frac{|\widetilde F_k(U,Y)|}{\mu}\right)
        Y^{-(n+1)}\,dU\,dY .
\end{align*}

For this application, the upper strip bound is \(B=b\cosh 1\).  Thus the
patching in Theorem~\ref{thm:STW-local-finite-type} uses \(U\)-intervals of
length \(8B\), enlarged by \(B\) on both sides, and the enlarged intervals
have overlap at most \(2\).  Since \(B\) is fixed, both the chart size and the
overlap bound are independent of \(k\).

Let
$$
        E_k(\mu)
        =
        \{(2^kU,2^kY):(U,Y)\in \widetilde E_k(\mu)\}.
$$
Then
$$
\begin{aligned}
        \nu_n(E_k(\mu))
        &=
        2^{k(1-n)}
        \int_{\widetilde E_k(\mu)}
        Y^{-(n+1)}\,dU\,dY                                      \\
        &\le
        C_{n,a,b}
        2^{k(1-n)}
        |\widetilde E_k(\mu)|                                    \\
        &\le
        C_{n,a,b}
        2^{k(1-n)}
        \int
        \Phi\left(\frac{|\widetilde F_k(U,Y)|}{\mu}\right)
        Y^{-(n+1)}\,dU\,dY                                      \\
        &=
        C_{n,a,b}
        \int
        \Phi\left(\frac{|F_k(u,y)|}{\mu}\right)
        d\nu_n(u,y).
\end{aligned}
$$
This proves \eqref{eq:geo-dyadic-localized-Lloglog}.

We now perform the final summation using the restricted level sets.  Define
$$
        \mathcal E_k
        =
        \left\{
        (u,y):
        k\in I(y),\ 
        \mathfrak M_{\mathrm{geo}}F_k(u,y)>\lambda/N_0
        \right\}.
$$
By construction, if \((u,y)\in\mathcal E_k\), then
\(
        a2^k\le y\le b(\cosh 1)2^k.
\)

Thus each \(\mathcal E_k\) is localized to the region \(y\simeq 2^k\).

From \eqref{eq:geo-active-pointwise}, if
$
        \mathfrak M_{\mathrm{geo}}F(u,y)>\lambda,
$
then
$$
        \sum_{k\in I(y)}
        \mathfrak M_{\mathrm{geo}}F_k(u,y)>\lambda.
$$
Since \(|I(y)|\le N_0\) uniformly in \(y\), there exists some \(k\in I(y)\) such that
$
        \mathfrak M_{\mathrm{geo}}F_k(u,y)>\lambda/N_0.
$
Therefore
$$
        \{(u,y):\mathfrak M_{\mathrm{geo}}F(u,y)>\lambda\}
        \subset
        \bigcup_{k\in\mathbb Z}\mathcal E_k.
$$
Hence
$$
        \nu_n\{(u,y):\mathfrak M_{\mathrm{geo}}F(u,y)>\lambda\}
        \le
        \sum_{k\in\mathbb Z}\nu_n(\mathcal E_k).
$$
Using \eqref{eq:geo-dyadic-localized-Lloglog} with \(\mu=\lambda/N_0\), we get
$$
\begin{aligned}
        \nu_n\{(u,y):\mathfrak M_{\mathrm{geo}}F(u,y)>\lambda\}
        &\le
        C_n
        \sum_{k\in\mathbb Z}
        \int
        \Phi\left(\frac{N_0|F_k(u,y)|}{\lambda}\right)
        d\nu_n(u,y).
\end{aligned}
$$

Let
$$
        N_\chi
        =
        \sup_{y>0}\#\{k:\chi_k(y)\neq0\}.
$$
This number is finite because \(\operatorname{supp}\chi\subset[a,b]\).  Since
the partition is non-negative and \(\sum_k\chi_k=1\), we have
\(0\le\chi_k\le1\), and hence \(|F_k|\le |F|\).  Therefore, by the doubling
property of \(\Phi\),
$$
        \Phi\left(\frac{N_0|F_k(u,y)|}{\lambda}\right)
        \le
        C_{N_0}
        \Phi\left(\frac{|F(u,y)|}{\lambda}\right).
$$
Summing over at most \(N_\chi\) indices gives
$$
        \sum_k
        \Phi\left(\frac{N_0|F_k(u,y)|}{\lambda}\right)
        \le
        C_{N_0,\chi}
        \Phi\left(\frac{|F(u,y)|}{\lambda}\right).
$$

Therefore
$$
        \nu_n\{(u,y):\mathfrak M_{\mathrm{geo}}F(u,y)>\lambda\}
        \le
        C_n
        \int
        \Phi\left(\frac{|F(u,y)|}{\lambda}\right)
        d\nu_n(u,y).
$$
This proves \eqref{eq:geo-slice-Lloglog}.
\end{proof}

\begin{proof}[Proof of Theorem \ref{thm:geo-local-Lloglog}]

Recall that 
$$
        \mathcal M_{\gamma_\omega}^{\mathrm{loc}}f(u\omega+z,y)
        =
        \mathfrak M_{\mathrm{geo}}(f_z)(u,y)
$$
holds for each fixed \(z\), with \(f_z(u,y)=f(u\omega+z,y)\).

Therefore, by Proposition~\ref{prop:geo-slice-Lloglog} and Fubini,
$$
\begin{aligned}
        \mu\left\{(x,y):
        \mathcal M_{\gamma_\omega}^{\mathrm{loc}}f(x,y)>\lambda
        \right\}
        &=
        \int_{\omega^\perp}
        \nu_n\left\{(u,y):
        \mathfrak M_{\mathrm{geo}}(f_z)(u,y)>\lambda
        \right\}\,dz                                      \\
        &\le
        C_n
        \int_{\omega^\perp}
        \int_{\mathbb R\times\mathbb R_+}
        \Phi\left(\frac{|f_z(u,y)|}{\lambda}\right)
        d\nu_n(u,y)\,dz                                  \\
        &=
        C_n
        \int_{G_n}
        \Phi\left(\frac{|f(x,y)|}{\lambda}\right)
        d\mu(x,y).
\end{aligned}
$$
This proves the theorem.
\end{proof}

The same slicing argument gives the \(L^p\) boundedness of \(\mathcal M_{\gamma_\omega}^{\mathrm{loc}}\) from the \(L^p\) boundedness of \(\mathfrak M_{\mathrm{geo}}\).

\begin{proposition}
\label{prop:geo-local-p}
For every fixed \(\omega\in S^{n-1}\) and every \(1<p\le\infty\),
$$
        \|\mathcal M_{\gamma_\omega}^{\mathrm{loc}}f\|_{L^p(G_n)}
        \le
        C_{p,n}\|f\|_{L^p(G_n)} .
$$
\end{proposition}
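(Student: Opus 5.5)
The plan is to prove an $L^p$ analogue of Proposition~\ref{prop:geo-slice-Lloglog} for the slice operator $\mathfrak M_{\mathrm{geo}}$ on $(\mathbb R\times\mathbb R_+,d\nu_n)$, and then integrate over the transverse variable $z\in\omega^\perp$ using Fubini. The case $p=\infty$ is trivial, since each average is bounded by $\|f\|_{L^\infty}$.

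A shortcut is available. One could interpolate between the weak-Orlicz bound of Theorem~\ref{thm:geo-local-Lloglog} and the trivial $L^\infty$ bound, exactly as in the proof of Theorem~\ref{thm:STW-local-finite-type-Lp}. For $1<p<\infty$ and $\lambda>0$, split $f=f_\lambda+f^\lambda$ with $f_\lambda=f\mathbf 1_{\{|f|>\lambda/2\}}$. Then
\[
\{\mathcal M^{\mathrm{loc}}_{\gamma_\omega}f>\lambda\}\subset\{\mathcal M^{\mathrm{loc}}_{\gamma_\omega}f_\lambda>\lambda/2\},
\]
and Theorem~\ref{thm:geo-local-Lloglog} gives
\[
\mu\{\mathcal M^{\mathrm{loc}}_{\gamma_\omega}f>\lambda\}\le C_n\int_{\{|f|>\lambda/2\}}\Phi(2|f|/\lambda)\,d\mu .
\]
Insert this into the layer-cake formula $\|\mathcal M^{\mathrm{loc}}_{\gamma_\omega}f\|_p^p=p\int_0^\infty\lambda^{p-1}\mu\{\cdot>\lambda\}\,d\lambda$ and apply Tonelli. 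The inner integral is then $|f|^p\int_0^2 t^{p-1}\Phi(2/t)\,dt$. This is finite because $t^{p-1}\Phi(2/t)\lesssim t^{p-2}\log\log(e^e+2/t)$ is integrable near $0$ when $p>1$. That concludes the proof.

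The remark before the statement suggests the authors instead want the slicing route, so I would also present (or mention) it. Write $f_z(u,y)=f(u\omega+z,y)$. By the identity $\mathcal M^{\mathrm{loc}}_{\gamma_\omega}f(u\omega+z,y)=\mathfrak M_{\mathrm{geo}}(f_z)(u,y)$ and the decomposition $d\mu=dz\,d\nu_n$, it suffices to show
\[
\|\mathfrak M_{\mathrm{geo}}F\|_{L^p(\nu_n)}\le C_{p,n}\|F\|_{L^p(\nu_n)}.
\]
To get this, repeat the dyadic argument of Proposition~\ref{prop:geo-slice-Lloglog}:
\begin{enumerate}
\item Decompose $F=\sum_k F_k$ with $F_k=F\chi_k$.
\item Use the pointwise bound \eqref{eq:geo-active-pointwise}, which has at most $N_0$ active indices.
\item Rescale each $F_k$ by $2^k$.
\item Apply Theorem~\ref{thm:STW-local-finite-type-Lp} on the strip $a\le Y\le b\cosh1$. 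There $Y^{-(n+1)}$ is comparable to a constant, and the Jacobian factor $2^{k(1-n)}$ appears identically on both sides.
\end{enumerate}
This yields $\|\mathfrak M_{\mathrm{geo}}F_k\|_{L^p(\nu_n)}\le C\|F_k\|_{L^p(\nu_n)}$ uniformly in $k$.

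To sum over $k$, use $|I(y)|\le N_0$ and Hölder. Since $\mathfrak M_{\mathrm{geo}}F_k$ is supported where $y\simeq 2^k$, we get
\[
|\mathfrak M_{\mathrm{geo}}F|^p\le N_0^{p-1}\sum_k|\mathfrak M_{\mathrm{geo}}F_k|^p .
\]
Then $\sum_k\|F_k\|_p^p\le N_\chi\|F\|_p^p$, because $0\le\chi_k\le1$ and the supports of the $\chi_k$ have bounded overlap.

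The only delicate point in either route is that the constants are uniform in $k$, i.e.\ the scale invariance under $(u,y)\mapsto(2^ku,2^ky)$. This was already verified in Proposition~\ref{prop:geo-slice-Lloglog}, so I expect no real obstacle. The interpolation route is the cleaner one, and I would present it as the main proof.
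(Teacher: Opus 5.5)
Your proof is correct, and your main (interpolation) route differs from the paper's; your secondary sketch is essentially the paper's own proof.

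\textbf{What the paper does.} It never interpolates at the level of $G_n$. It proves the slice bound \eqref{eq:slice-geo-local-p} in four steps:
\begin{enumerate}
\item it redoes the dyadic decomposition in $y$;
\item it rescales each piece to the strip $a\le Y\le b\cosh 1$;
\item it invokes Theorem~\ref{thm:STW-local-finite-type-Lp}, where the truncation and layer-cake argument is carried out on the compact strip, and sums the pieces with the factor $N_0^{p-1}$;
\item it integrates over $z\in\omega^\perp$.
\end{enumerate}

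\textbf{What you do instead.} You run the same truncation argument once, at the top level, on the global weak Orlicz bound of Theorem~\ref{thm:geo-local-Lloglog}. This is legitimate and not circular. That theorem is proved from Proposition~\ref{prop:geo-slice-Lloglog} and Theorem~\ref{thm:STW-local-finite-type}, never from an $L^p$ bound. Your route is shorter. It inherits uniformity in the dyadic index $k$, in the $U$-patching, and in the transverse variable $z$ automatically, so none of those checks need repeating.

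\textbf{What the paper's arrangement buys.} It isolates the slice estimate on $(\mathbb R\times\mathbb R_+,d\nu_n)$ as a standalone statement. In principle it would also let one replace Theorem~\ref{thm:STW-local-finite-type-Lp} by a classical local $L^p$ theorem for curved families with nonvanishing curvature. The $L^p$ bounds would then not depend on the deeper near-$L^1$ endpoint. As the paper is written, however, Theorem~\ref{thm:STW-local-finite-type-Lp} is itself derived from the endpoint estimate. So both arguments rest on the same input, and yours is simply the more economical one.
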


\begin{proof}
The case \(p=\infty\) is immediate:
\(
        \mathcal M_{\gamma_\omega}^{\mathrm{loc}}f(x,y)
        \le
        \|f\|_{L^\infty(G_n)} .
\)

Let \(1<p<\infty\).  We first prove the corresponding slice estimate.  Recall
$$
        d\nu_n(u,y)=\frac{du\,dy}{y^{n+1}}
$$
and
$$
        \mathfrak M_{\mathrm{geo}}F(u,y)
        =
        \sup_{0<r\le1}
        \frac1r\int_0^r
        |F(u+y\tanh t,y\operatorname{sech}t)|\,dt .
$$
We claim that
\begin{equation}\label{eq:slice-geo-local-p}
        \|\mathfrak M_{\mathrm{geo}}F\|_{L^p(d\nu_n)}
        \le
        C_{p,n}\|F\|_{L^p(d\nu_n)} .
\end{equation}

Similar to the proof of Proposition \ref{prop:geo-slice-Lloglog}, we define
$$
        I(y)
        =
        \left\{
        k\in\mathbb Z:
        [y\operatorname{sech}1,y]\cap [a2^k,b2^k]\neq\varnothing
        \right\}.
$$
Then it holds
$$
        |I(y)|\le N_0
        \qquad \text{for every } y>0
$$ 
and
$$
        \mathfrak M_{\mathrm{geo}}F(u,y)
        \le
        \sum_{k\in I(y)}
        \mathfrak M_{\mathrm{geo}}F_k(u,y).
$$

Thus, it suffices to prove the uniform dyadic estimate
\begin{equation}\label{eq:dyadic-slice-p}
        \|\mathfrak M_{\mathrm{geo}}F_k\|_{L^p(d\nu_n)}
        \le
        C_{p,n}\|F_k\|_{L^p(d\nu_n)},
        \qquad k\in\mathbb Z.
\end{equation}
Indeed, if \(\mathfrak M_{\mathrm{geo}}F_k(u,y)\neq0\), then for some
\(0\le t\le1\),
\(
        y\operatorname{sech}t\in [a2^k,b2^k].
\)

Hence
$$
        a2^k\le y\le b(\cosh 1)2^k.
$$
Thus the variable is restricted to \(y\simeq 2^k\).  Make the normalized
change of variables
$$
        u=2^kU,
        \qquad
        y=2^kY.
$$
Then \(Y\) ranges in a fixed compact interval $[a,b\cosh 1]$, independent
of \(k\), and
$$
        d\nu_n(u,y)
        =
        2^{k(1-n)}\frac{dU\,dY}{Y^{n+1}}.
$$
On this fixed \(Y\)-range, the measure \(Y^{-(n+1)}dU\,dY\) is comparable to
Lebesgue measure.

In the normalized variables \(u=2^kU\), \(y=2^kY\), this dyadic piece is the
same normalized geodesic averaging operator that appeared in
Theorem~\ref{thm:STW-local-finite-type}.  The finite-type geometry on each
fixed \(Y\)-strip has already been verified there: after normalization the
curve has uniformly bounded \(C^N\)-seminorms and satisfies a uniform
order-two curvature condition.  Therefore the local finite-type
\(L^p\) estimate in Theorem~\ref{thm:STW-local-finite-type-Lp} gives
\eqref{eq:dyadic-slice-p}, with a constant independent of the dyadic index
\(k\).

The passage from bounded \(U\)-patches to the full \(U\)-axis is also uniform.
The normalized \(U\)-displacement is bounded on the fixed \(Y\)-strip, so a
bounded-overlap partition of the \(U\)-axis gives the same estimate globally in
\(U\).  Therefore the constant in \eqref{eq:dyadic-slice-p} depends only on
\(p,n\) and on the fixed strip constants, not on \(k\).

Now use the finite-overlap pointwise inequality.  Since \(|I(y)|\le N_0\),
$$
\begin{aligned}
        \|\mathfrak M_{\mathrm{geo}}F\|_{L^p(d\nu_n)}^p
        &\le
        \int
        \left(
        \sum_{k\in I(y)}
        \mathfrak M_{\mathrm{geo}}F_k(u,y)
        \right)^p
        d\nu_n(u,y)                                      \\
        &\le
        N_0^{p-1}
        \sum_{k\in\mathbb Z}
        \|\mathfrak M_{\mathrm{geo}}F_k\|_{L^p(d\nu_n)}^p \\
        &\le
        C_{p,n}
        \sum_{k\in\mathbb Z}
        \|F_k\|_{L^p(d\nu_n)}^p                           \\
        &\le
        C_{p,n}
        \|F\|_{L^p(d\nu_n)}^p,
\end{aligned}
$$
because the functions \(\chi_k\) have bounded overlap.  This proves
\eqref{eq:slice-geo-local-p}.

Recall that for each fixed \(z\in\omega^\perp\), we have defined
$$
        f_z(u,y)=f(u\omega+z,y).
$$
Since
$$
        x=u\omega+z,
        \qquad
        d\mu(x,y)=dz\,d\nu_n(u,y),
        \qquad
        d\nu_n(u,y)=\frac{du\,dy}{y^{n+1}},
$$
Fubini's theorem implies that \(f_z\in L^p(d\nu_n)\) for almost every
\(z\in\omega^\perp\), and
$$
        \int_{\omega^\perp}
        \|f_z\|_{L^p(d\nu_n)}^p\,dz
        =
        \|f\|_{L^p(G_n)}^p.
$$

The local fixed-geodesic operator leaves the transverse variable \(z\) fixed, that is
$$
        \mathcal M_{\gamma_\omega}^{\mathrm{loc}}f(u\omega+z,y)
        =
        \mathfrak M_{\mathrm{geo}}(f_z)(u,y).
$$

The constant in the slice estimate is uniform in the transverse parameter
\(z\).  Indeed, the estimate \eqref{eq:slice-geo-local-p} is an operator
norm estimate on the fixed measure space
$$
        \bigl(\mathbb R\times\mathbb R_+,d\nu_n\bigr),
        \qquad
        d\nu_n(u,y)=\frac{du\,dy}{y^{n+1}}.
$$
It says that for every \(F\in L^p(d\nu_n)\),
$$
        \|\mathfrak M_{\mathrm{geo}}F\|_{L^p(d\nu_n)}
        \le
        C_{p,n}\|F\|_{L^p(d\nu_n)},
$$
where \(C_{p,n}\) depends only on \(p\), \(n\), and the local truncation, but
not on any additional parameter.  Since neither \(\mathfrak M_{\mathrm{geo}}\)
nor \(d\nu_n\) depends on \(z\), this same constant applies to every slice
\(f_z\).

Applying the slice estimate \eqref{eq:slice-geo-local-p} to \(f_z\), for
almost every \(z\), and then integrating in \(z\), gives
$$
\begin{aligned}
        \|\mathcal M_{\gamma_\omega}^{\mathrm{loc}}f\|_{L^p(G_n)}^p
        =
        \int_{\omega^\perp}
        \|\mathfrak M_{\mathrm{geo}}(f_z)\|_{L^p(d\nu_n)}^p\,dz        
        \le
        C_{p,n}^p
        \int_{\omega^\perp}
        \|f_z\|_{L^p(d\nu_n)}^p\,dz                                  
        =
        C_{p,n}^p
        \|f\|_{L^p(G_n)}^p .
\end{aligned}
$$
Taking \(p\)-th roots yields
$$
        \|\mathcal M_{\gamma_\omega}^{\mathrm{loc}}f\|_{L^p(G_n)}
        \le
        C_{p,n}\|f\|_{L^p(G_n)}.
$$
This proves the proposition.
\end{proof}

\subsection{The large-time tail}

\begin{proposition}
\label{prop:geo-tail-L1}
For every fixed \(\omega\in S^{n-1}\),
$$
        \|\mathcal M_{\gamma_\omega}^{\mathrm{tail}}f\|_{L^1(G_n)}
        \le
        C_n\|f\|_{L^1(G_n)}.
$$
Consequently,
$$
        \|\mathcal M_{\gamma_\omega}^{\mathrm{tail}}f\|_{L^{1,\infty}(G_n)}
        \le
        C_n\|f\|_{L^1(G_n)}.
$$
\end{proposition}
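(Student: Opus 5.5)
The plan is to dominate the tail maximal function pointwise by a single positive linear operator built from the time-$t$ operators $S_t$. Then the strong $L^1$ bound follows from Lemma~\ref{lem:geodesic-norm} with $p=1$ and Minkowski's integral inequality.

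The key observation is the pointwise domination. For $r\ge1$ we have $1/r\le 1/\max(1,t)$ whenever $0\le t\le r$, so
$$
        \frac1r\int_0^r |S_tf(x,y)|\,dt
        \le
        \int_0^r \frac{|S_tf(x,y)|}{\max(1,t)}\,dt
        \le
        \int_0^\infty \frac{|S_tf(x,y)|}{\max(1,t)}\,dt .
$$
The right-hand side does not depend on $r$. Taking the supremum over $r\ge1$ therefore gives
$$
        \mathcal M_{\gamma_\omega}^{\mathrm{tail}}f(x,y)
        \le
        \int_0^\infty \frac{|S_tf(x,y)|}{\max(1,t)}\,dt =: Tf(x,y).
$$
A cruder bound, $1/r\le 1$, would also suffice, since the decay in $t$ will come from the modular factor rather than from the weight $1/\max(1,t)$.

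Next, integrate over $G_n$ with respect to $\mu$. Tonelli's theorem applies because everything is non-negative, and Lemma~\ref{lem:geodesic-norm} with $p=1$ gives $\|S_t|f|\|_{L^1(G_n)}=(\operatorname{sech}t)^n\|f\|_{L^1(G_n)}$. Hence
$$
        \|\mathcal M_{\gamma_\omega}^{\mathrm{tail}}f\|_{L^1(G_n)}
        \le
        \|f\|_{L^1(G_n)}\int_0^\infty (\operatorname{sech}t)^n\,dt .
$$
Since $n\ge1$ and $(\operatorname{sech}t)^n\le 2^n e^{-nt}$, the integral is at most $2^n/n$. This gives the strong bound with $C_n=2^n/n$; it is exactly the ``modular exponential decay'' mentioned in the introduction.

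The weak-type consequence follows from Chebyshev's inequality, $\mu\{\mathcal M^{\mathrm{tail}}_{\gamma_\omega}f>\lambda\}\le\lambda^{-1}\|\mathcal M^{\mathrm{tail}}_{\gamma_\omega}f\|_{L^1}$. The only point needing care is measurability of the supremum over $r\ge1$. I would handle it by noting that $r\mapsto \frac1r\int_0^r|S_tf|\,dt$ is continuous in $r$, so the supremum may be taken over rational $r$. I also do not expect an obstacle from the failure of strong $L^1$ bounds elsewhere in the paper: that failure comes entirely from the local part, and here the factor $1/r\le1$ combined with the integrable decay $(\operatorname{sech}t)^n$ makes the argument elementary.
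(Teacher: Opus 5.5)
Your proof is correct and matches the paper's argument: you dominate $\mathcal M^{\mathrm{tail}}_{\gamma_\omega}f$ pointwise by $\int_0^\infty |S_tf|\,dt$ (the weight $1/\max(1,t)$ is harmless but unnecessary), integrate using Tonelli and the identity $\|S_tf\|_{L^1(G_n)}=(\operatorname{sech}t)^n\|f\|_{L^1(G_n)}$ from Lemma~\ref{lem:geodesic-norm}, and get the weak bound from Chebyshev. Your explicit constant $2^n/n$ and the measurability remark via rational $r$ are valid small additions.
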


\begin{proof}
For \(r\ge1\),
$$
        \frac1r\int_0^r |S_tf(x,y)|\,dt
        \le
        \int_0^\infty |S_tf(x,y)|\,dt.
$$
Therefore
$$
        \mathcal M_{\gamma_\omega}^{\mathrm{tail}}f(x,y)
        \le
        \int_0^\infty |S_tf(x,y)|\,dt.
$$

From Lemma~\ref{lem:geodesic-norm} we have
$$
        \|S_tf\|_{L^1(G_n)}
        =
        (\operatorname{sech}t)^n
        \|f\|_{L^1(G_n)}.
$$
Hence
$$
\begin{aligned}
        \|\mathcal M_{\gamma_\omega}^{\mathrm{tail}}f\|_{L^1(G_n)}
        &\le
        \int_0^\infty
        \|S_tf\|_{L^1(G_n)}\,dt                         
        =
        \left(
        \int_0^\infty(\operatorname{sech}t)^n\,dt
        \right)
        \|f\|_{L^1(G_n)}                                
        \le
        C_n\|f\|_{L^1(G_n)}.
\end{aligned}
$$
The weak \((1,1)\) estimate follows from Chebyshev's inequality.
\end{proof}

For \(1<p<\infty\), the same argument gives a strong tail estimate.  Since
$$
        \mathcal M_{\gamma_\omega}^{\mathrm{tail}}f(x,y)
        \le
        \int_0^\infty |S_tf(x,y)|\,dt,
$$
Minkowski's integral inequality and Lemma~\ref{lem:geodesic-norm} give
$$
\begin{aligned}
        \|\mathcal M_{\gamma_\omega}^{\mathrm{tail}}f\|_{L^p(G_n)}
        &\le
        \int_0^\infty \|S_tf\|_{L^p(G_n)}\,dt                 
        =
        \left(\int_0^\infty(\operatorname{sech}t)^{n/p}\,dt\right)
        \|f\|_{L^p(G_n)}.
\end{aligned}
$$
The integral is finite because \(\operatorname{sech}t\simeq 2e^{-t}\) as
\(t\to\infty\).  Thus
\begin{equation}\label{eq:geo-tail-p}
        \|\mathcal M_{\gamma_\omega}^{\mathrm{tail}}f\|_{L^p(G_n)}
        \le
        C_{p,n}\|f\|_{L^p(G_n)},
        \qquad 1<p<\infty .
\end{equation}
For \(p=\infty\), the estimate follows directly from
$$
        \frac1r\int_0^r |S_tf(x,y)|\,dt
        \le
        \|f\|_{L^\infty(G_n)}.
$$

\subsection{Fixed-geodesic bounds}

We now combine the local estimates with the large-time tail estimates.

\begin{theorem}
\label{thm:fixed-geodesic-Lloglog}
For every fixed \(\omega\in S^{n-1}\), the fixed-geodesic maximal operator
\(\mathcal M_{\gamma_\omega}\) is bounded on \(L^p(G_n)\) for every
\(1<p\le\infty\).  Moreover,
$$
        \mathcal M_{\gamma_\omega}
        :
        L\log\log L(G_n)
        \longrightarrow
        L^{1,\infty}(G_n)
$$
in the weak Orlicz sense: for every measurable \(f\) and every \(\lambda>0\),
\begin{equation}\label{eq:fixed-geo-Lloglog-endpoint}
        \mu\left\{(x,y):
        \mathcal M_{\gamma_\omega}f(x,y)>\lambda
        \right\}
        \le
        C_n
        \int_{G_n}
        \Phi\left(\frac{|f(x,y)|}{\lambda}\right)
        d\mu(x,y).
\end{equation}
\end{theorem}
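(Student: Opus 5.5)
The plan is to combine the local and tail pieces through the pointwise bound
$$
        \mathcal M_{\gamma_\omega}f\le \mathcal M_{\gamma_\omega}^{\mathrm{loc}}f+\mathcal M_{\gamma_\omega}^{\mathrm{tail}}f .
$$
This holds trivially, since the supremum over \(r>0\) splits into the supremum over \(0<r\le1\) and over \(r\ge1\), and \(\max(A,B)\le A+B\) for nonnegative \(A,B\).

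For the strong bounds with \(1<p\le\infty\), apply Minkowski's inequality to the pointwise bound. The local part is controlled by Proposition~\ref{prop:geo-local-p}, the tail part by \eqref{eq:geo-tail-p} for finite \(p\), and \(p=\infty\) is immediate from the definition. This step is routine.

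For the weak Orlicz endpoint \eqref{eq:fixed-geo-Lloglog-endpoint}, fix \(\lambda>0\) and use the inclusion
$$
        \{\mathcal M_{\gamma_\omega}f>\lambda\}\subset\{\mathcal M_{\gamma_\omega}^{\mathrm{loc}}f>\lambda/2\}\cup\{\mathcal M_{\gamma_\omega}^{\mathrm{tail}}f>\lambda/2\}.
$$
Theorem~\ref{thm:geo-local-Lloglog} with threshold \(\lambda/2\) bounds the measure of the first set by \(C_n\int\Phi(2|f|/\lambda)\,d\mu\). Lemma~\ref{lem:Phi-basic} then gives \(\Phi(2s)\le C\Phi(s)\), so this is at most \(C_n\int\Phi(|f|/\lambda)\,d\mu\). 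For the second set, combine Chebyshev's inequality with the strong \(L^1\) bound of Proposition~\ref{prop:geo-tail-L1}. This gives a bound of \((2C_n/\lambda)\|f\|_{L^1}=2C_n\int |f|/\lambda\,d\mu\), and since \(s\le\Phi(s)\) by Lemma~\ref{lem:Phi-basic}, this is at most \(2C_n\int\Phi(|f|/\lambda)\,d\mu\). Adding the two contributions gives the claim.

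I expect no real obstacle, because all the substantive work sits in the earlier results. The only point needing care is an \(f\) for which \(\int\Phi(|f|/\lambda)\,d\mu<\infty\) but \(f\notin L^1\). That case cannot occur, since \(|f|/\lambda\le\Phi(|f|/\lambda)\) forces \(f\in L^1(G_n)\). When the right-hand side is infinite, the estimate holds trivially. So the tail estimate always applies whenever the inequality is nontrivial.
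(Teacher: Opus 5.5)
Your proposal is correct and matches the paper's proof essentially step for step. Both arguments use the pointwise splitting $\mathcal M_{\gamma_\omega}f\le \mathcal M_{\gamma_\omega}^{\mathrm{loc}}f+\mathcal M_{\gamma_\omega}^{\mathrm{tail}}f$, bound the local part by Theorem~\ref{thm:geo-local-Lloglog} at level $\lambda/2$ together with the doubling of $\Phi$, and bound the tail part by the $L^1$ estimate of Proposition~\ref{prop:geo-tail-L1} together with $s\le\Phi(s)$; the strong bounds come from Proposition~\ref{prop:geo-local-p} and \eqref{eq:geo-tail-p}. Your remark that a finite right-hand side forces $f\in L^1(G_n)$ is a harmless refinement that the paper leaves implicit.
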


\begin{proof}
We first prove the endpoint estimate.  By definition,
$$
        \mathcal M_{\gamma_\omega}f
        \le
        \mathcal M_{\gamma_\omega}^{\mathrm{loc}}f
        +
        \mathcal M_{\gamma_\omega}^{\mathrm{tail}}f .
$$
Therefore
$$
\begin{aligned}
        \mu\left\{
        \mathcal M_{\gamma_\omega}f>\lambda
        \right\}
        &\le
        \mu\left\{
        \mathcal M_{\gamma_\omega}^{\mathrm{loc}}f>\lambda/2
        \right\}                                             +
        \mu\left\{
        \mathcal M_{\gamma_\omega}^{\mathrm{tail}}f>\lambda/2
        \right\}.
\end{aligned}
$$

The local term is controlled by the local fixed-geodesic
\(L\log\log L\) endpoint estimate.  Namely, by
Theorem~\ref{thm:geo-local-Lloglog}, applied with \(\lambda/2\) in place of
\(\lambda\),
$$
        \mu\left\{
        \mathcal M_{\gamma_\omega}^{\mathrm{loc}}f>\lambda/2
        \right\}
        \le
        C_n
        \int_{G_n}
        \Phi\left(\frac{2|f|}{\lambda}\right)
        d\mu .
$$
By the doubling estimate for \(\Phi\), recorded in \eqref{eq:Phi-basic},
$$
        \Phi(2s)\le C\Phi(s),
        \qquad s\ge0,
$$
and hence
\begin{equation}\label{eq:fixed-geo-local-endpoint-term}
        \mu\left\{
        \mathcal M_{\gamma_\omega}^{\mathrm{loc}}f>\lambda/2
        \right\}
        \le
        C_n
        \int_{G_n}
        \Phi\left(\frac{|f|}{\lambda}\right)
        d\mu .
\end{equation}

For the tail term, Proposition~\ref{prop:geo-tail-L1} gives the weak
\(L^1\) estimate
$$
\begin{aligned}
        \mu\left\{
        \mathcal M_{\gamma_\omega}^{\mathrm{tail}}f>\lambda/2
        \right\}
         \le
        \frac{C_n}{\lambda}
        \|f\|_{L^1(G_n)} .
\end{aligned}
$$
Since \(\Phi(s)\ge s\) for every \(s\ge0\), we have
$$
        \frac1\lambda\|f\|_{L^1(G_n)}
        =
        \int_{G_n}\frac{|f|}{\lambda}\,d\mu
        \le
        \int_{G_n}
        \Phi\left(\frac{|f|}{\lambda}\right)d\mu .
$$
Thus
\begin{equation}\label{eq:fixed-geo-tail-endpoint-term}
        \mu\left\{
        \mathcal M_{\gamma_\omega}^{\mathrm{tail}}f>\lambda/2
        \right\}
        \le
        C_n
        \int_{G_n}
        \Phi\left(\frac{|f|}{\lambda}\right)d\mu .
\end{equation}
Combining \eqref{eq:fixed-geo-local-endpoint-term} and
\eqref{eq:fixed-geo-tail-endpoint-term} proves
\eqref{eq:fixed-geo-Lloglog-endpoint}.

We now prove the strong \(L^p\) estimates.  Let \(1<p<\infty\).  By
Proposition~\ref{prop:geo-local-p},
$$
        \|\mathcal M_{\gamma_\omega}^{\mathrm{loc}}f\|_{L^p(G_n)}
        \le
        C_{p,n}\|f\|_{L^p(G_n)} .
$$
For the large-time part, \eqref{eq:geo-tail-p} gives
$$
        \|\mathcal M_{\gamma_\omega}^{\mathrm{tail}}f\|_{L^p(G_n)}
        \le
        C_{p,n}\|f\|_{L^p(G_n)} .
$$
Since
$
        \mathcal M_{\gamma_\omega}f
        \le
        \mathcal M_{\gamma_\omega}^{\mathrm{loc}}f
        +
        \mathcal M_{\gamma_\omega}^{\mathrm{tail}}f,
$
we obtain
$$
        \|\mathcal M_{\gamma_\omega}f\|_{L^p(G_n)}
        \le
        C_{p,n}\|f\|_{L^p(G_n)} .
$$
For \(p=\infty\), the estimate follows directly from
$$
        \mathcal M_{\gamma_\omega}f(x,y)
        \le
        \|f\|_{L^\infty(G_n)} .
$$
This proves the strong \(L^p\) estimates.
\end{proof}

\subsection{\texorpdfstring{Failure of strong $L^1$ endpoint}{Failure of strong L1 endpoint}}

The strong $L^1$ endpoint fails.  This is a local obstruction and is already present at small times.

\begin{proposition}\label{prop:geo_not_l1}
For every fixed $\omega\in S^{n-1}$, the operator $\mathcal M_{\gamma_\omega}$ is not bounded from $L^1(G_n)$ to $L^1(G_n)$.
\end{proposition}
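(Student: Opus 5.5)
I will use a concentrated test function, as in Proposition~\ref{prop:trans-not-L1}, and show that the maximal function is too large on a set of positive measure. Set $f_\varepsilon=\mathbf 1_{Q_\varepsilon}$ with $Q_\varepsilon=B(0,\varepsilon)\times[1,1+\varepsilon]$, a small ball of radius $\varepsilon$ around the identity $(0,1)$. Then $\|f_\varepsilon\|_{L^1(G_n)}\simeq_n\varepsilon^{n+1}$. It would suffice to show
$$
\|\mathcal M_{\gamma_\omega}f_\varepsilon\|_{L^1(G_n)}\gtrsim \varepsilon^{n+1}\log(1/\varepsilon).
$$
I intend to obtain this from the local part only, with times $t\in(0,1]$ and heights $y\simeq1$. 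There $d\mu$ is comparable to Lebesgue measure, so everything is Euclidean.

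\textbf{Step 1: the points that reach $Q_\varepsilon$.} For a point $g=(x,y)$ and time $t_0\in[2\varepsilon,1]$, I will ask when $g\gamma_\omega(t_0)$ lies in the half-size box $Q_{\varepsilon/2}$. That is, $x+y\omega\tanh t_0\in B(0,\varepsilon/2)$ and $y\operatorname{sech}t_0\in[1,1+\varepsilon/2]$. The map
$$
(x,y)\mapsto(x+y\omega\tanh t_0,\;y\operatorname{sech}t_0)
$$
is right multiplication, and by Lemma~\ref{lem:geodesic-norm} its Jacobian is bounded above and below for $t_0\le1$. So the set $P_{t_0}$ of such $g$ has Lebesgue measure $\simeq\varepsilon^{n+1}$. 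It sits near the point
$$
(-\omega\sinh t_0,\ \cosh t_0),
$$
which is the backward geodesic at distance $t_0$ from the identity.

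\textbf{Step 2: the time spent in $Q_\varepsilon$.} For $g\in P_{t_0}$, the curve $t\mapsto g\gamma_\omega(t)$ has Euclidean speed $y\operatorname{sech}t\simeq1$. Its position is Lipschitz in $t$ with a uniform constant. Hence it stays inside $Q_\varepsilon$ for all $t$ with $|t-t_0|\le c\varepsilon$. Taking $r=t_0+c\varepsilon\le2t_0$ in the definition of the maximal operator gives
$$
\mathcal M_{\gamma_\omega}f_\varepsilon(g)\ge\frac{c\varepsilon}{2t_0}\quad\text{on }P_{t_0}.
$$

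\textbf{Step 3: disjoint pieces at dyadic scales.} I will pick times $t_j=2\varepsilon j$ for $1\le j\le J\simeq1/\varepsilon$. The centers $(-\omega\sinh t_j,\cosh t_j)$ are Euclidean-separated by $\gtrsim\varepsilon$, so after shrinking constants the sets $P_{t_j}$ are disjoint. Summing over $j$ gives
$$
\|\mathcal M_{\gamma_\omega}f_\varepsilon\|_{L^1}\gtrsim\sum_{j\le J}\varepsilon^{n+1}\cdot\frac{1}{j}\simeq\varepsilon^{n+1}\log(1/\varepsilon).
$$
Since $\|f_\varepsilon\|_{L^1}\simeq\varepsilon^{n+1}$, the ratio blows up as $\varepsilon\to0$, which proves the proposition.

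\textbf{Main obstacle.} The step that needs care is the disjointness in Step~3. The separation of consecutive centers must beat the diameters of the sets $P_{t_j}$, which are $O(\varepsilon)$. I would handle this by spacing the times at $K\varepsilon$ for a large fixed constant $K$. This changes the harmonic sum only by the factor $1/K$, so the logarithmic gain survives.
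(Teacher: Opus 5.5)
Your argument is correct in substance, with one small slip in Step~2 described below, and it is organized differently from the paper's. The paper takes $\omega=e_1$ and the slab $f_\varepsilon=\mathbf 1_{[0,\varepsilon]}(x_1)\mathbf 1_{[0,1]^{n-1}}(x')\mathbf 1_{[1,2]}(y)$. This set is thin only in the direction of motion, so $\|f_\varepsilon\|_{L^1(G_n)}\simeq\varepsilon$.

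Because the geodesic leaves $x'$ fixed and changes $y$ only by a bounded factor for small $t$, only $x_1$ matters. From a point with $-1/4<x_1<-2\varepsilon$ and $3/2<y<7/4$, the orbit enters the slab at time $\tau\simeq|x_1|$ and stays there for time $\gtrsim\varepsilon$. This gives the pointwise bound $\mathcal M_{\gamma_{e_1}}f_\varepsilon\gtrsim\varepsilon/|x_1|$ on one explicit region. The logarithm then comes from $\int_{2\varepsilon}^{1/4}\varepsilon\,dx_1/x_1$, with no discretization.

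Your small $(n+1)$-dimensional box forces you to follow the whole backward tube and cut it into pieces $P_{t_j}$, which costs Jacobian and separation bookkeeping. In exchange, your argument is coordinate-free. It uses only two facts: $\gamma_\omega$ is smooth with nonvanishing velocity near $t=0$, and right translation by $\gamma_\omega(t)$, $t\le1$, has Jacobian $\operatorname{sech}t\simeq1$. So it applies essentially verbatim to any such curve.

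The disjointness you flag as the main obstacle is routine. Since $|g\gamma_\omega(t)-g\gamma_\omega(t')|=y\,|\gamma_\omega(t)-\gamma_\omega(t')|\gtrsim|t-t'|$ for $y\simeq1$ and $t,t'\le1$, spacing $K\varepsilon$ with a fixed large $K$ suffices.

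The step that actually needs a fix is Step~2. The two-sided window $|t-t_0|\le c\varepsilon$ fails because $Q_{\varepsilon/2}=B(0,\varepsilon/2)\times[1,1+\varepsilon/2]$ shares the bottom face $\{y=1\}$ with $Q_\varepsilon$, and the height $y\operatorname{sech}t$ is strictly decreasing in $t$. If $g\gamma_\omega(t_0)$ has height close to $1$, then $g\gamma_\omega(t)\notin Q_\varepsilon$ for every $t>t_0$. There are two easy repairs:
\begin{itemize}
\item Use the one-sided window $[t_0-c\varepsilon,t_0]$ and take $r=t_0$. On this window the height only increases, by at most $O(c\varepsilon)$, the horizontal position moves by $O(c\varepsilon)$, and $t_0-c\varepsilon\ge0$ because $t_0\ge K\varepsilon$.
\item Alternatively, replace $Q_{\varepsilon/2}$ by a box centered inside $Q_\varepsilon$, such as $B(0,\varepsilon/2)\times[1+\varepsilon/4,1+3\varepsilon/4]$.
\end{itemize}
Either way you still get $\mathcal M_{\gamma_\omega}f_\varepsilon\ge c\varepsilon/t_0$ on a set of measure $\simeq\varepsilon^{n+1}$ near $(-\omega\sinh t_0,\cosh t_0)$. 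The harmonic sum $\sum_{j\le 1/(K\varepsilon)}\varepsilon^{n+1}/(Kj)\simeq\varepsilon^{n+1}\log(1/\varepsilon)$ then finishes the proof as you describe.
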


\begin{proof}
By rotation invariance in the $x$ variable it is enough to consider $\omega=e_1$.  Write $x=(x_1,x')$, with $x'\in\mathbb R^{n-1}$.  Let $0<\varepsilon<10^{-2}$ and set
$$
        f_\varepsilon(x,y)
        =
        \mathbf 1_{[0,\varepsilon]}(x_1)
        \mathbf 1_{[0,1]^{n-1}}(x')
        \mathbf 1_{[1,2]}(y).
$$
Then
$$
        \|f_\varepsilon\|_{L^1(G_n)}
        =
        \int_{[0,\varepsilon]}dx_1\int_{[0,1]^{n-1}}dx'
        \int_1^2\frac{dy}{y^{n+1}}
        \simeq_n \varepsilon.
$$

We now obtain a lower bound for the maximal function.  Take
$$
        -\frac14<x_1<-2\varepsilon,
        \qquad
        x'\in[0,1]^{n-1},
        \qquad
        \frac32<y<\frac74.
$$

We claim that the set of \(t\) for which
$$
        0<x_1+y\tanh t<\varepsilon
$$
contains an interval centered of length at least
\(\varepsilon/2y\). 

Indeed, put
$$
        \tau=\operatorname{arctanh}\left(\frac{-x_1}{y}\right).
$$
For the above range of \((x_1,y)\), one has \(0<\tau<1/5\).  The map
\(t\mapsto x_1+y\tanh t\) is increasing, and its derivative is
\(y\operatorname{sech}^2t\), which is less than $y$.  Thus the interval $(\tau,\tau+\varepsilon/2y)$ is what we want.

In this interval we also have
$$
        1<y\operatorname{sech}t<2.
$$
Therefore \(f_\varepsilon(x+y e_1\tanh t,y\operatorname{sech}t)=1\) on a
\(t\)-interval of length at least \(\varepsilon/2y\).

Choose \(r=2\tau\).  Since \(|x_1|>2\varepsilon\) and
\(y\in(3/2,7/4)\), the interval just described is contained in \([0,r]\), and
\(r\simeq |x_1|\).  Hence
$$
        \mathcal M_{\gamma_{e_1}}f_\varepsilon(x,y)
        \ge
        \frac{c\varepsilon/y}{r}
        \ge
        c_n\frac{\varepsilon}{|x_1|}.
$$
It follows that
\begin{align*}
        \|\mathcal M_{\gamma_{e_1}}f_\varepsilon\|_{L^1(G_n)}
        &\ge
        c_n
        \int_{3/2}^{7/4}\frac{dy}{y^{n+1}}
        \int_{[0,1]^{n-1}}dx'
        \int_{-1/4}^{-2\varepsilon}\frac{\varepsilon}{|x_1|}\,dx_1 
        \ge
        c_n\varepsilon\log\frac1\varepsilon.
\end{align*}
Since $\|f_\varepsilon\|_{L^1(G_n)}\simeq_n\varepsilon$, the ratio
$$
        \frac{\|\mathcal M_{\gamma_{e_1}}f_\varepsilon\|_{L^1(G_n)}}
        {\|f_\varepsilon\|_{L^1(G_n)}}
$$
tends to infinity as $\varepsilon\downarrow0$.  Thus no strong $L^1$ bound is possible.
\end{proof}

\section{Probabilistic Perspectives}\label{sec:prob}

This section records two standard consequences of the modular calculation.  The first is the vertical drift of Brownian motion in logarithmic coordinates.  The second is a discrete random-walk maximal estimate for right convolution powers.  The notation in this section is as follows: $m$ denotes left Haar measure on $G_n$, while $\sigma$ denotes a compactly supported probability measure on $G_n$.

\subsection{Brownian motion and logarithmic drift}

Identify $G_n$ with the upper half-space $\mathbb R^n\times\mathbb R_+$ equipped with the hyperbolic metric
$$
        ds^2=y^{-2}(|dx|^2+dy^2).
$$
The Laplace--Beltrami operator is
\begin{equation}\label{eq:laplacian}
        \Delta_{G_n}
        =y^2\left(\sum_{j=1}^n\frac{\partial^2}{\partial x_j^2}
        +\frac{\partial^2}{\partial y^2}\right)
        -(n-1)y\frac{\partial}{\partial y}.
\end{equation}
Thus Brownian motion with generator $\frac12\Delta_{G_n}$ satisfies
\begin{equation}\label{eq:sde}
\begin{cases}
        dX_t^j=Y_t\,dW_t^j,\qquad j=1,\ldots,n,\\[4pt]
        dY_t=Y_t\,dW_t^{n+1}-\dfrac{n-1}{2}Y_t\,dt,
\end{cases}
\end{equation}
where $W_t=(W_t^1,\ldots,W_t^{n+1})$ is standard Brownian motion in $\mathbb R^{n+1}$.

Put $U_t=\log Y_t$.  By It\^o's formula,
\begin{align*}
        dU_t
        &=\frac1{Y_t}\,dY_t-\frac1{2Y_t^2}(dY_t)^2 
        =dW_t^{n+1}-\frac{n-1}{2}\,dt-\frac12\,dt 
        =dW_t^{n+1}-\frac n2\,dt.
\end{align*}
Thus $\log Y_t$ is a one-dimensional Brownian motion with drift $-n/2$.  This is the stochastic counterpart of the identity
$$
        d\mu(x,e^u)=e^{-nu}\,dx\,du.
$$
The same exponent $n$ appears in the modular factor for right translations and in the Haar-compatible dilation averages.

\subsection{Fixed geodesics}

The fixed-geodesic maximal operator studied in Section~\ref{sec:geo} is an operator on functions on $G_n$.  It is not the non-tangential maximal function for harmonic functions on the upper half-space.  The connection is geometric only: both involve geodesics approaching the boundary.

For fixed $\omega$, the right translation
$$
        (x,y)\mapsto(x+y\omega\tanh t,y\operatorname{sech}t)
$$
changes $L^p(G_n)$ norm by the factor $(\operatorname{sech}t)^{n/p}$ for
$1\le p<\infty$.  This gives strong $L^1$ control of the large-time part of
the maximal operator.  The small-time part is parabolic: after slicing in the
orthogonal variables and normalizing near a point, it is modeled on the
parabolic curve
$$
        (U,V)\mapsto(U+t,V-\tfrac12t^2).
$$
Thus the endpoint used in this paper is the weak \(L\log\log L\) estimate
$$
        L\log\log L(G_n)
        \longrightarrow
        L^{1,\infty}(G_n),
$$
not a claimed full local weak $(1,1)$ estimate on all of $L^1(G_n)$.  The
counterexample in Proposition~\ref{prop:geo_not_l1} shows that the strong
$L^1$ endpoint fails logarithmically and locally.

\subsection{A random-walk maximal inequality}

Let $\sigma$ be a compactly supported probability measure on $G_n$.  Define the right convolution operator
$$
        R_\sigma f(g)=\int_{G_n}f(gh)\,d\sigma(h)
$$
and the discrete maximal operator
$$
        \widetilde M_\sigma f(g)
        =\sup_{N\ge1}\frac1N\sum_{k=1}^N |R_\sigma^k f(g)|.
$$
For $1\le p<\infty$ set
$$
        \rho_p(\sigma)=\int_{G_n}y(h)^{n/p}\,d\sigma(h).
$$

\begin{theorem}\label{thm:rw}
Let $1\le p<\infty$.  If
$$
        \rho_p(\sigma)<1,
$$
then $\widetilde M_\sigma$ is bounded on $L^p(G_n)$.
\end{theorem}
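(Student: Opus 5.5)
The plan is to bound the maximal operator crudely by the sum of all convolution powers, just as the large-time geodesic tail was bounded in Proposition~\ref{prop:geo-tail-L1}:
$$
        \widetilde M_\sigma f(g)\le\sum_{k=1}^\infty |R_\sigma^k f(g)|
        \le\sum_{k=1}^\infty R_\sigma^k|f|(g).
$$
Here we used $\frac1N\sum_{k\le N}a_k\le\sum_k a_k$ for $a_k\ge0$ and the positivity of $R_\sigma$. By Minkowski's inequality it then suffices to prove the operator norm estimate
$$
        \|R_\sigma f\|_{L^p(G_n)}\le\rho_p(\sigma)\|f\|_{L^p(G_n)}.
$$
Iterating this gives $\|R_\sigma^k\|_{p\to p}\le\rho_p(\sigma)^k$. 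The series $\sum_k\rho_p(\sigma)^k$ converges because $\rho_p(\sigma)<1$, so
$$
        \|\widetilde M_\sigma f\|_p\le\frac{\rho_p(\sigma)}{1-\rho_p(\sigma)}\|f\|_p.
$$

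The one-step estimate comes from Minkowski's integral inequality applied to $R_\sigma f(g)=\int f(gh)\,d\sigma(h)$:
$$
        \|R_\sigma f\|_p\le\int_{G_n}\|f(\cdot\,h)\|_{L^p(G_n)}\,d\sigma(h).
$$
By the right translation identity \eqref{eq:right-translation-integral} with $\Phi=|f|^p$ and $h=(a,b)$, we have $\|f(\cdot\,h)\|_p^p=b^n\|f\|_p^p$. Hence $\|f(\cdot\,h)\|_p=y(h)^{n/p}\|f\|_p$, and integrating against $\sigma$ gives exactly $\rho_p(\sigma)\|f\|_p$.

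Measurability of $(g,h)\mapsto f(gh)$ is standard because multiplication is continuous. Compact support of $\sigma$ guarantees that $\rho_p(\sigma)$ is finite, although only $\rho_p(\sigma)<1$ is actually used.

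There is no serious obstacle. The only point needing care is the direction of the modular factor, namely that right translation by $h$ multiplies the $L^p$ norm by $y(h)^{n/p}$ and not $y(h)^{-n/p}$. This is settled by \eqref{eq:right-translation-integral}, and it is consistent with Lemma~\ref{lem:geodesic-norm}, where $y(\gamma_\omega(t))=\operatorname{sech}t$ gives the factor $(\operatorname{sech}t)^{n/p}$. The case $p=1$ is included, and it yields a strong $L^1$ bound.
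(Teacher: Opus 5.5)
Your proposal is correct and follows the paper's proof essentially line for line. Like the paper, you get the one-step bound $\|R_\sigma\|_{p\to p}\le\rho_p(\sigma)$ from Minkowski's integral inequality and the right-translation identity \eqref{eq:right-translation-integral}, then dominate $\widetilde M_\sigma f\le\sum_{k\ge1}R_\sigma^k|f|$ and sum the geometric series to obtain the same constant $\rho_p(\sigma)/(1-\rho_p(\sigma))$.
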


\begin{proof}
For fixed $h=(a,b)\in G_n$, \eqref{eq:right-translation-integral} gives
$$
        \int_{G_n}|f(gh)|^p\,dm(g)
        =b^n\int_{G_n}|f(g)|^p\,dm(g).
$$
Therefore
$$
        \|f(\cdot h)\|_{L^p(G_n)}=b^{n/p}\|f\|_{L^p(G_n)}.
$$
Minkowski's integral inequality gives
\begin{align*}
        \|R_\sigma f\|_{L^p(G_n)}
        &\le \int_{G_n}\|f(\cdot h)\|_{L^p(G_n)}\,d\sigma(h)
        =\rho_p(\sigma)\|f\|_{L^p(G_n)}.
\end{align*}
Thus $\|R_\sigma\|_{p\to p}\le\rho_p(\sigma)<1$.

By definition,
$$
        \widetilde M_\sigma f
        \le \sum_{k=1}^\infty R_\sigma^k |f|.
$$
The series converges in operator norm on $L^p(G_n)$, and hence
$$
        \|\widetilde M_\sigma f\|_{L^p(G_n)}
        \le
        \sum_{k=1}^\infty \|R_\sigma\|_{p\to p}^k\|f\|_{L^p(G_n)}
        \le
        \frac{\rho_p(\sigma)}{1-\rho_p(\sigma)}\|f\|_{L^p(G_n)}.
$$
The proof is complete.
\end{proof}

The condition $\rho_p(\sigma)<1$ is only a sufficient condition for $p>1$.  It is useful because it is explicit and follows directly from the modular function.  At $p=1$, the opposite drift gives a direct obstruction.

\begin{lemma}\label{lem:weak-l1-finite-support}
Let $h$ be supported in a set $S$ of finite measure.  If
$$
        \|h\|_{L^{1,\infty}}\le B,
        \qquad
        \|h\|_{L^\infty}\le M,
$$
then
$$
        \|h\|_{L^1}
        \le
        B\left(1+\log^+\frac{m(S)M}{B}\right).
$$
\end{lemma}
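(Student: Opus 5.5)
We argue through the layer-cake formula, truncated at two heights, and compare each piece with the two available bounds. Since $|h|\le M$ almost everywhere and $h$ vanishes outside $S$, for every $\lambda>0$ we have
$$
        m\{|h|>\lambda\}\le \min\Bigl(m(S),\,\frac{B}{\lambda}\Bigr),
$$
where the first bound comes from the support condition and the second from the weak-type bound. We also have $m\{|h|>\lambda\}=0$ for $\lambda\ge M$. Hence
$$
        \|h\|_{L^1}=\int_0^{M} m\{|h|>\lambda\}\,d\lambda .
$$

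We split the $\lambda$-integral at the crossover height $\lambda_0=B/m(S)$, where the two bounds coincide. First suppose $\lambda_0<M$. On $(0,\lambda_0)$ we use the bound $m(S)$, which contributes $m(S)\lambda_0=B$. On $(\lambda_0,M)$ we use $B/\lambda$, which contributes
$$
        B\log\frac{M}{\lambda_0}=B\log\frac{m(S)M}{B}.
$$
Adding the two pieces gives exactly $B\bigl(1+\log\frac{m(S)M}{B}\bigr)$, and in this case the logarithm is positive, so it equals $\log^+$. Now suppose $\lambda_0\ge M$. Then we use only the support bound on all of $(0,M)$, and get
$$
        \|h\|_{L^1}\le m(S)M\le B,
$$
which is again at most the claimed right-hand side, since $\log^+\ge0$.

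There is no real obstacle in this argument. The only points needing care are degenerate cases. If $B=0$, then $h=0$ almost everywhere and there is nothing to prove (interpreting the right-hand side appropriately). If $m(S)=0$, then $h=0$ almost everywhere as well. Finally, we must check that $\log^+$ is the correct expression in both regimes, which is exactly the case split above.
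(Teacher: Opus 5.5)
Your proposal is correct and matches the paper's proof: the layer-cake formula truncated at $M$, a split at $\lambda_0=B/m(S)$, the support bound below $\lambda_0$ and the weak-type bound above it, and the separate case $\lambda_0\ge M$ giving $m(S)M\le B$. Your remark on the degenerate cases $B=0$ and $m(S)=0$, which the paper passes over silently, is a harmless and appropriate addition.
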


\begin{proof}
Using the distribution function formula,
$$
        \|h\|_{L^1}
        =\int_0^M m(\{|h|>\lambda\})\,d\lambda.
$$
Set $\lambda_0=B/m(S)$.  If $\lambda_0\ge M$, then $\|h\|_1\le m(S)M\le B$.  If $\lambda_0<M$, split the integral at $\lambda_0$.  For $0<\lambda<\lambda_0$ use $m(\{|h|>\lambda\})\le m(S)$, and for $\lambda_0<\lambda<M$ use $m(\{|h|>\lambda\})\le B/\lambda$.  This gives
$$
        \|h\|_1
        \le B+B\log\frac{M}{\lambda_0}
        =B\left(1+\log\frac{m(S)M}{B}\right).
$$
The proof is complete.
\end{proof}

\begin{proposition}\label{prop:rw-expansive}
Let $\sigma$ be a compactly supported probability measure on $G_n$.  If
$$
        \rho_1(\sigma)=\int_{G_n}y(h)^n\,d\sigma(h)>1,
$$
then $\widetilde M_\sigma$ is not bounded on $L^1(G_n)$ and is not of weak type $(1,1)$.
\end{proposition}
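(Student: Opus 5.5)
The plan is to exploit the exact growth of $L^1$ mass under right convolution, which is the opposite-drift counterpart of the contraction used in Theorem~\ref{thm:rw}. For $f\ge0$ and fixed $h=(a,b)$, identity \eqref{eq:right-translation-integral} gives $\int f(gh)\,dm(g)=b^n\int f\,dm$. Integrating in $h$ against $\sigma$ yields the exact identity
$$
\int_{G_n} R_\sigma f\,dm=\rho_1(\sigma)\int_{G_n} f\,dm .
$$
Iterating gives $\int R_\sigma^k f\,dm=\rho_1(\sigma)^k\int f\,dm$. Here no inequality is lost, because $f\ge0$ and Tonelli applies.

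\emph{Failure of strong type.} Take $f=\mathbf 1_K$ with $K$ compact and $m(K)>0$. Since $\widetilde M_\sigma f\ge \frac1N R_\sigma^N f$ pointwise for every $N$, we get
$$
\|\widetilde M_\sigma f\|_{L^1}\ge \frac{\rho_1(\sigma)^N}{N}\,\|f\|_{L^1}.
$$
The right-hand side tends to $\infty$ as $N\to\infty$ because $\rho_1(\sigma)>1$. In fact this already shows $\widetilde M_\sigma f\notin L^1$.

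\emph{Failure of weak type $(1,1)$.} Suppose $\|\widetilde M_\sigma f\|_{L^{1,\infty}}\le A\|f\|_{L^1}$ for all $f$. I would apply Lemma~\ref{lem:weak-l1-finite-support} to $h_N=\frac1N R_\sigma^N\mathbf 1_K$, which satisfies three bounds.
\begin{enumerate}[(a)]
\item $0\le h_N\le \widetilde M_\sigma \mathbf 1_K$, so $\|h_N\|_{L^{1,\infty}}\le B:=A\,m(K)$.
\item $\|h_N\|_{L^\infty}\le 1/N\le1$.
\item $h_N$ is supported in $S_N=K\cdot(\operatorname{supp}\sigma^{-1})^N$, because $R_\sigma^N\mathbf 1_K(g)\neq0$ forces $gh_1\cdots h_N\in K$ for some $h_i\in\operatorname{supp}\sigma$.
\end{enumerate}
The lemma then gives
$$
\frac{\rho_1(\sigma)^N}{N}\,m(K)=\|h_N\|_{L^1}\le B\Bigl(1+\log^+\frac{m(S_N)}{B}\Bigr).
$$
The left side grows exponentially in $N$. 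So I need $\log m(S_N)=O(N)$, which makes the right side grow at most linearly in $N$ and gives the contradiction.

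The main obstacle is this exponential bound on $m(S_N)$. Since $\operatorname{supp}\sigma$ is compact, its inverse set lies in $\{(a,b):|a|\le A_0,\ c\le b\le C\}$ for some $0<c\le1\le C$. A product of $N$ such elements $(a_1,b_1)\cdots(a_N,b_N)$ has height in $[c^N,C^N]$. Its translation part is $a_1+b_1a_2+\cdots+b_1\cdots b_{N-1}a_N$, which is bounded by $NA_0C^N$. Left-multiplying by the compact set $K$ keeps the result inside a box of the form $\{|x|\le C_1 N C^{N}C_K,\ c^N c_K\le y\le C^N C_K\}$. The Haar measure of such a box is at most $(C_2NC^N)^n\cdot c^{-nN}\cdot C_3$, which is $e^{O(N)}$. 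That completes the argument.
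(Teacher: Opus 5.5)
Your proposal is correct and follows essentially the paper's argument: the exact mass identity $\|R_\sigma^k\mathbf 1_K\|_{L^1}=\rho_1(\sigma)^k m(K)$ gives the strong-type failure, and for the weak-type failure you apply Lemma~\ref{lem:weak-l1-finite-support} with support measure $e^{O(N)}$, which yields an $O(N)$ bound contradicting exponential growth. The only cosmetic differences are that you use the single term $\frac1N R_\sigma^N\mathbf 1_K$ rather than the Ces\`aro mean $A_N\mathbf 1_K$, and that you bound the support by an explicit coordinate box rather than by a hyperbolic ball of radius $R_K+NR_L$ and the volume growth of $\mathbb H^{n+1}$.
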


\begin{proof}
Choose a compact set $K\subset G_n$ with $0<m(K)<\infty$, and put $f=\mathbf 1_K$.  For $N\ge1$ define
$$
        A_N f=\frac1N\sum_{k=1}^N R_\sigma^k f.
$$
Since $f\ge0$, we have $0\le A_Nf\le\widetilde M_\sigma f$.

For non-negative $\phi$, \eqref{eq:right-translation-integral} implies
$$
        \|R_\sigma\phi\|_{L^1(G_n)}
        =\rho_1(\sigma)\|\phi\|_{L^1(G_n)}.
$$
Iterating gives
$$
        \|R_\sigma^k f\|_{L^1(G_n)}
        =\rho_1(\sigma)^k\|f\|_{L^1(G_n)}
$$
since all $R_\sigma^k f$ are non-negative for $k=0,1,2,\cdots$.

Therefore
\begin{equation}\label{eq:strong_growth}
        \|A_Nf\|_{L^1(G_n)}
        =\frac{\|f\|_{L^1(G_n)}}N\sum_{k=1}^N\rho_1(\sigma)^k
        \ge c\,\|f\|_{L^1(G_n)}\frac{\rho_1(\sigma)^N}{N},
\end{equation}
where $c>0$ is independent of $N$.  Since $A_Nf\le\widetilde M_\sigma f$, this rules out strong $L^1$ boundedness.

It remains to rule out weak type $(1,1)$.  Suppose that
$$
        \|\widetilde M_\sigma h\|_{L^{1,\infty}(G_n)}
        \le C\|h\|_{L^1(G_n)}.
$$
Then
$$
        \|A_Nf\|_{L^{1,\infty}(G_n)}
        \le C\|f\|_{L^1(G_n)}=:B
$$
for all $N$.  Also $\|A_Nf\|_\infty\le1$, because $\sigma$ is a probability measure and $0\le f\le1$.

Let $L=\operatorname{supp}\sigma$.  Since $K$ and $L$ are compact,
$$
        \operatorname{supp}(A_Nf)
        \subset \bigcup_{k=1}^N K(L^{-1})^k.
$$
We use the left-invariant Riemannian distance associated with
$$
        ds^2=y^{-2}(|dx|^2+dy^2).
$$
Choose \(R_K,R_L>0\) such that
$$
        K\subset B(e,R_K),
        \qquad
        L^{-1}\subset B(e,R_L).
$$
The triangle inequality and left invariance give
$$
        K(L^{-1})^k\subset B(e,R_K+kR_L),
$$
and therefore
$$
        \operatorname{supp}(A_Nf)
        \subset B(e,R_K+NR_L).
$$
For this metric, \(G_n\) is the upper half-space model of
\(\mathbb H^{n+1}\), and \(m\) is its Riemannian volume measure.  Thus
$$
        m(B(e,R))
        =|S^n|\int_0^R(\sinh r)^n\,dr
        \le C_ne^{nR},
        \qquad R\ge0.
$$
Consequently, there exist constants \(C_2,C_3>0\), depending only on
\(K,L\), and \(n\), such that
\begin{equation}\label{eq:supp_growth}
        m(\operatorname{supp}(A_Nf))\le C_2e^{C_3N}.
\end{equation}
Applying Lemma~\ref{lem:weak-l1-finite-support} to $h=A_Nf$, with $M=1$, gives
$$
        \|A_Nf\|_{L^1(G_n)}
        \le B\left(1+\log^+\frac{C_2e^{C_3N}}{B}\right)
        \le C'(1+N),
$$
where $C'$ is independent of $N$.  This contradicts the exponential lower bound \eqref{eq:strong_growth}.
\end{proof}

\bigskip
\bigskip
\bigskip

\noindent {\bf Acknowledgements}: The authors would like to thank the referees for their careful reading and valuable comments, which have helped us to improve the quality of this manuscript.

The first author is supported by the Australian Research Council through grant  DP260100485. 
The second author  is supported in part by NSTC through grant 111-2115-M-002-010-MY5. 
\bigskip

\bigskip
\bigskip

\noindent {\bf Conflicts of Interest}:
On behalf of all co-authors, the corresponding author states that there is no conflict of interest.

\bigskip
\bigskip

\noindent {\bf Data Availability Statement}:
Data sharing is not applicable to this article, as no datasets were generated or analysed during the current study.

\vspace{1cm}

(J. Li) School of Mathematical and Physical Sciences, Macquarie University, NSW, 2109, Australia.\\ 
{\it E-mail}: \texttt{ji.li@mq.edu.au}\\

(C.-Y. Shen) Department of Mathematics, National Taiwan University, Taiwan. \\ {\it E-mail}: \texttt{cyshen@math.ntu.edu.tw}\\

(C.-J. Wen) Department of Mathematics, Sun Yat-sen University, Guangzhou, 510275, China.
\\ 
{\it E-mail}: \texttt{wenchj@mail2.sysu.edu.cn}\\

\end{document}